\newtheorem{theorem}{Theorem}[section]
\newtheorem{assumption}{Assumption}
\newtheorem{consequence}{Consequence}
\newtheorem{thm}{Theorem}
\newtheorem{lemma}[theorem]{Lemma}
\newtheorem{prop}{Proposition}
\newtheorem{clr}[theorem]{Corollary}
\theoremstyle{definition}
\newtheorem{rmk}{Remark}
\renewcommand\theassumption{H\arabic{assumption}}
\newcommand{\be}{\begin{equation}}
\newcommand{\ee}{\end{equation}}
\newcommand{\bsubeq}{\begin{subequations}}
	\newcommand{\esubeq}{\end{subequations}}
\newcommand{\ds}{\displaystyle}
\newcommand{\calL}{{\mathcal{L}}}
\newcommand{\calH}{{\mathcal{H}}}
\newcommand{\calN}{{\mathcal{N}}}
\newcommand{\calI}{{\mathcal{I}}}
\newcommand{\calX}{{\mathcal{X}}}
\newcommand{\calE}{{\mathcal{E}}}
\newcommand{\calF}{{\mathcal{F}}}
\newcommand{\calD}{{\mathcal{D}}}
\newcommand{\calA}{{\mathcal{A}}}
\newcommand{\calZ}{{\mathcal{Z}}}
\newcommand{\calU}{{\mathcal{U}}}
\newcommand{\calV}{{\mathcal{V}}}
\newcommand{\BR}{\mathbb{R}}
\newcommand{\BN}{\mathbb{N}}
\newcommand{\wti}{\widetilde}
\newcommand{\mb}[1]{\mathbf{#1}}
\newcommand{\bpm}{\begin{pmatrix}}
	\newcommand{\epm}{\end{pmatrix}}
\newcommand{\bbm}{\begin{bmatrix}}
	\newcommand{\ebm}{\end{bmatrix}}
\newcommand{\bem}{\begin{matrix}}
	\newcommand{\eem}{\end{matrix}}
\numberwithin{equation}{section}
\numberwithin{thm}{section}
\numberwithin{rmk}{section}
\numberwithin{prop}{section}
\newcommand{\bs}[1]{\boldsymbol{#1}}
\newcommand\rfrac[2]{{}^{#1}\!/_{#2}}
\newcommand{\norm}[1]{\left\lVert#1\right\rVert}
\newcommand{\abs}[1]{\left\lvert#1\right\rvert}
\newcommand{\ip}[2]{\langle #1, #2 \rangle}
\newcommand{\ipp}[2]{( #1, #2 )}
\newcommand{\Uad}{U_{ad}}
\newcommand{\nin}{\noindent}
\newcommand{\V}{\mathcal{V}}
\newcommand{\Wtxy}{W(0,T;X,Y)}
\newcommand{\Wtay}{W(0,T;\calD(A),Y)}
\newcommand{\Wixy}{W_{\infty}(X,Y)}
\newcommand{\Wiv}{W_{\infty}(V,V')}
\newcommand{\Wiay}{W_{\infty}(\calD(A),Y)}
\newcommand{\Wiya}{W_{\infty}(Y,[\calD(A)]')}
\newcommand{\WOiay}{W^0_{\infty}(\calD(A),Y)}
\newcommand{\Liy}{L^2(I;Y)}
\newcommand{\LiV}{L^2(I;V)}
\newcommand{\LiVp}{L^2(I;V')}
\newcommand{\LiA}{L^2(I;\calD(A))}
\newcommand{\LiAd}{L^2(I;[\calD(A)]')}
\newcommand{\ol}[1]{\overline{#1}}
\newcommand{\ba}{_{(\bs \beta)}}
\long\def\/*#1*/{}
\newcommand{\tcr}[1]{\textcolor{red}{#1}}
\begin{document}
	\title{Differentiability of the Value Function on $H^1(\Omega)$ of Semilinear Parabolic Infinite Time Horizon Optimal Control Problems under Control Constraints}
	
	\author{Karl Kunisch \thanks{Institute for Mathematics and Scientific Computing, University of Graz, Heinrichstrasse 36, A-8010 Graz, Austria, and Radon Institute, Austrian Academy of Science, Linz, Austria. (karl.kunisch@uni-graz.at).}
		\and Buddhika Priyasad \thanks{Department of Mathematics and Statistics, University of Konstanz, 78457 Konstanz, Germany. (priyasad@uni-konstanz.de).}}
	
	\date{\today}
	\maketitle
	
	\begin{abstract}
		\nin An abstract framework guaranteeing the  continuous differentiability of local  value functions on $H^1(\Omega)$ associated with  optimal stabilization problems subject to  abstract semilinear parabolic equations  in the presence of norm constraints on the control is established. It guarantees the local well-posedness of  the associated Hamilton-Jacobi-Bellman equation in the classical sense. Examples illustrate that the assumptions  imposed on the dynamical system are satisfied for practically relevant semilinear equations. 
	\end{abstract}
	
	\section{Introduction.}
	Continuous differentiability of local value functions with respect to initial data in $H^1(\Omega)$ associated with norm-constrained optimal control problems on the infinite time horizon for  semilinear parabolic equations is investigated. This is an extension of our work in \cite{BK:2019} where a similar problem was investigated in the situation where the domain of the value function is $L^2(\Omega)$ rather than $H^1(\Omega)$. The motivation for these two different settings is the following. The class of semilinear equations for which local differentiability of the value function can be established  is  wider for $H^1(\Omega)$ initial data than  that for $L^2(\Omega)$ initial data. The $L^2(\Omega)$ framework, on the other hand, is of independent value. It can be  more flexible for consistent numerical realizations, for instance,    than the setting in   $H^1(\Omega)$.  Concerning the structure of proofs we can frequently proceed similarly as in \cite{BK:2019}, on a technical level, however,  many differences need to be overcome.
	
	To accomplish our goal we utilize  techniques for sensitivity  analysis of abstract infinite dimensional optimization problems. We consider the optimality conditions for our constraint optimal control problem  as parameter dependent  generalized equations and  apply  known results on the Lipschitz continuous dependence of their solutions \cite{D:1995}. Once Lipschitz continuity of the optimal controls with respect to initial data is established, differentiability of the value function and the  derivation of the Hamilton Jacobi Bellman (HJB)  equation will follow by  considerations which are rather straightforward by now. On a technical level, the most severe difficulties arise due to the fact that we consider infinite horizon rather the finite horizon  problems. These are the natural settings for optimal stabilization problems. We shall assume that the linear part of the  dynamical system is feedback stabilizable, see eg. \cite{KR:2019, RT:1975}.  Subsequently we  derive appropriate conditions which guarantee that the nonlinear equations allow stabilizing controls. These conditions will also guarantee that the adjoint state associated to a local optimal control is unique. Utilizing the transversality condition this later property is typically  immediate for finite horizon problems, but it is not at all obvious in the   infinite horizon case. Concerning the treatment of the norm constraints on the controls, utilizing properties of the adjoint states, we shall be able to argue that the constraints are inactive for the optimal controls beyond a certain time horizon. This property is essential (at least for our approach) to guarantee the uniqueness of the adjoint states and their Lipschitz continuous dependence on the parameters, see eg. \textit{Step} 2 of the proof of Proposition \ref{prop-p-W}.
	
	Let us point out a remarkable consequence of our analysis: While differentiability of the local value function $V$ on a subset of $H^1(\Omega)$  guarantees that its  Riesz representations lie in a subset of $H^1(\Omega)'$, the use of dynamic programming  will  guarantee that the Risez representations themselves are elements of  $H^1(\Omega)$. This will allow that the HJB equation can in fact be interpreted on subsets of $H^1(\Omega)$, which would otherwise not be possible, due to the appearance of the action of the state equation.
	
	The results of the paper require a smallness assumption on the initial data. While this is restrictive we conjecture that this is inevitable. Under structural assumptions, on the nonlinearity, for instance monotonicity,  it may be possible to obtain semi-global or global results, in the sense that for every  local optimal  solution satisfying a second order condition, the value function is differentiable in a neighborhood. This could be of interest for future work.
	
	Concerning the literature, there appears to be very little focusing on the $C^1$ property of the value function for infinite dimensional systems. For the finite dimensional case we can mention \cite[Chapter 5]{F:2001}, \cite{CF:2013}, \cite{Goe:2005}. It should  be recalled that  regularity of the value function is a special case,  since in general we can only expect its Lipschitz continuity, see \cite{BCD:1997}. On the other hand there are many papers on the sensitivity analysis of finite horizon optimal control problems with pointwise, and thus affine, control constraints. We  quote some of them \cite{BM:1999}, \cite{Gri:2004}, \cite{GV:2006}, \cite{GHH:2005}, \cite{Mal:2002}, \cite{MT:1999}, \cite{Tro:2000}, \cite{Wac:2005}. These papers are not written with the intention of application towards the HJB equation. The \cite{BKP:2018}, \cite{BKP:2019} Taylor functions expansions are provided for optimal stabilization problems without norm constraints related to bilinear problems and the Navier Stokes equation, respectively.
	
	The following sections are structured as follows. Section 2 contains the precise problem statement, the assumptions which are postulated throughout the remainder of the paper, and the statement of the two main theorems. Section 3 contains the technical developments which lead up to the analysis of the adjoint equations, the transversality condition, and second order optimality. Lipschitz continuous dependence of the optimal controls, and the associated states and adjoint states with respect to the initial condition is contained in Section 4. This is proved by verifying the  Dontchev-Robinson strong regular point  condition.
	The local $C^1$-property of the value function and the HJB equation are treated Section 5. Section 6 contains  concrete problems, which illustrate the applicability of the assumptions with respect to specific nonlinearities.

	\section{Differentiability of value function for optimal stabilization}\label{Sec-Diff}
	
	\subsection{Problem formulation}
	\nin Let $\Omega$ be an open connected bounded subset of $\BR^d$ with a $C^{1,1}$  boundary $\Gamma$.  The associated space-time cylinder is denoted by $Q = \Omega \times (0,\infty)$ and the associated lateral boundary by  $\Sigma = \Gamma \times (0,\infty)$. We consider the following the optimal stabilization problem in abstract form with associated value function,
	\begin{equation}\tag{$P$}\label{P}
	\V(y_0) = \inf_{\bem y \in \Wiay \\ u(t) \in \mathcal{U}_{ad}  \eem} \frac{1}{2} \int_{0}^{\infty} \norm{y(t)}^2_Y dt+ \frac{\alpha}{2} \int_{0}^{\infty} \norm{u(t)}^2_{\calU}dt,
	\end{equation}
	\nin subject to the semilinear parabolic equation
	\begin{subequations}\label{eq_y}
		\begin{empheq}[left=\empheqlbrace]{align}
		y_t &=  Ay + F(y) + Bu &\text{ in } L^2(I;Y),\\
		y(0) &= y_0  &\text{ in } V.
		\end{empheq}
		%\nin and the control constraint
		%\begin{equation}
		%u \in \Uad.
		%\end{equation}
	\end{subequations}
	Here $I=(0,\infty)$, $Y = L^2(\Omega),\, V = H^1(\Omega)$, and
	$\Wiay$ denotes the classical space for strong solutions   of semilinear parabolic  problems. Together with the operator $A$ it will be introduced below. Further $\mathcal U_{ad} \subset \mathcal{U}$ stands for the set of admissible controls which is defined as
	$\calU_{ad} = \{v \in \calU: \| v \|_{\calU}\le \eta\}$, where $\eta>0$,  and $\mathcal U$ is a Hilbert space which will be identified with its dual.  By $\ds \mathbb{P}_{\calU_{ad}}$ we denote the Hilbert space  projection of $\calU$ onto $\calU_{ad}$,  $U=L^2(I;\calU)$, and
	\begin{equation}\label{eq:Uad}
	U_{ad} = \{u \in U: \|u(t)\|_{\calU}\le \eta, \text{ for } a.e. \ t>0 \},
	\end{equation}
	and finally $B\in \mathcal{L}(\calU,Y)$. For this choice of admissible controls the dynamical system can be stabilized for all sufficiently small initial data in $V$, see  Corollary \ref{cor3.1} and Remark \ref{rem4.1}, provided that the pair $(A,B)$ is exponentially stabilizable.\\
	
	\nin Throughout $F$ stands for  the substitution operator associated to a mapping  $\mathfrak{f}: \mathbb{R} \to \mathbb{R}$, so that $(F y)(t) = \mathfrak{f} (y(t))$. Sufficient conditions  which guarantee the existence of  solutions to \eqref{eq_y} as well as solutions $(\bar{y}, \bar{u})$ to \eqref{P}, with $y_0\in V$ sufficiently small, will be given below. We shall also make use of the adjoint equation associated to an optimal state $\bar y$,  given by
	\begin{subequations}
		\begin{align}
		-p_t - A' p - F'(\bar{y})^* p &= -\bar{y} &\text{ in  } \LiAd,\\
		\lim_{t \rightarrow \infty} p(t) &= 0 &\text{ in } V'.
		\end{align}
	\end{subequations}
	Its solution $p$ will be considered in $L^2(I;Y)$ as well as  in $\Wiv = \{v\in L^2(I;V): v_t \in L^2(I;V')\}$, and eventually in $\Wiay$.

	\subsection{Further notation and assumptions on $A$}
	
	Since  $V$ is continuously and densely embedded in $Y$ the  inclusions  $V \subset Y \subset V'$ constitute a Gelfand triple with $Y$ as pivot space. Let $a$ be a continuous bilinear form on $V$ which is $V-Y$ coercive:
	\begin{equation}\label{eq:kk2}
	\exists \rho \in \mathbb{R} \text { and } \exists \theta>0 : \quad	a(v,v) + \rho |v|_Y^2 \geq \theta \norm{v}^2_V, \text{ for all } v \in V.
	\end{equation}
	This bilinear form induces the operator $A$ by means of
	\begin{equation*}
	\begin{array}l
	\calD(A)=\{v\in V:w\to a(v,w) \text { is } Y-\text{continuous}\}\\
	(Av,w)=-a(v,w), \forall v\in\calD(A), \forall w\in V.
	\end{array}
	\end{equation*}
	This operator is closed and densely defined in $Y$, and it can be uniquely extended to an operator $A\in \calL(V,V')$. It generates an analytic, exponentially bounded semigroup $e^{At}$ with $\|e^{At}\|_{\calL(Y)} \le e^{\rho t}$,  see eg. \cite[Part II, Chapter 1, pg 115]{BPDM:2007}. The adjoint of $A$, considered as operator in $Y$  will be denoted by  $A^*$. We assume that $ \calD(A)=\calD(A^*)$, algebraically and  topologically. We shall also  consider $A$ as a bounded linear operator in $\calL(\calD(A), Y)$, with dual $A' \in \calL(Y, [\calD(A)]')$.  Since $\calD(A^*)$ is dense in $Y$, we recall from  eg. \cite[Section 0.3, pg 6]{LT:2000}  that $A'$ is the unique extension of the operator $A^* \in \calL(\calD(A^*), Y)$ to an element in $\calL(Y, [\calD(A)]')$.
	%	
	%
	%	Moreover  we have
	%	\begin{align*}
	%	\ip{A^*p}{y}_Y &= \ip{p}{Ay}_Y, \quad \text{for all } p \in \calD(A^*), y \in \calD(A), \text{ and }\\
	%	\ip{Ay}{p}_Y &= \ip{y}{A'p}_{\calD(A), [\calD(A)]'}, \quad \text{for all } p \in Y, y \in \calD(A).
	%	\end{align*}
	\nin For any $T \in (0,\infty)$, we define the space $\Wtay$ which we endow with the norm
	\begin{equation}
	\norm{y}_{\Wtay} := \left( \norm{Ay}^2_{L^2(0,T;Y)}+ \norm{y}_{L^2(0,T;Y)} + \norm{\frac{dy}{dt}}^2_{L^2(0,T;Y)} \right)^{\rfrac{1}{2}}, \ y \in \Wtay.
	\end{equation}
	\nin Generally, given $T > 0$ and two Hilbert spaces $X \subset Y$, by $\Wtxy$ we denote the space
	\begin{equation}
	\Wtxy = \left\{ y \in L^2(0,T;X); \ \frac{dy}{dt} \in L^2(0,T;Y) \right\}.
	\end{equation}
	\nin For $T = \infty$, we write $\Wixy$ and $I = (0,\infty)$. We further define $W^0_{\infty}=\{y\in W_{\infty}: y(0)=0 \}$. We also set
	\begin{equation*}
	W(T,\infty;X,Y) = \bigg\{ y \in L^2(T,\infty;X); \ \frac{dy}{dt} \in L^2(T,\infty;Y) \bigg\}.
	\end{equation*}	
	We shall frequently use that $\Wiay$ embeds continuously into $C([0,\infty),V)$, see e.g. \cite[Theorem 4.2, Chapter 1]{LM:1972} and that $\ds \lim_{t \to \infty} y(t) = 0$ in $V$, for $y \in \Wiay$, see e.g. \cite{CK:2017}.
	Further $\|\calI\|$ denotes the norm of the embedding constant of $\Wiay$ into $\Liy$  and $\|i\|$ is the norm of the embedding $V$ into $Y$.\\
	
	\nin For $\delta > 0$ and $\bar y \in V$, we define the open neighborhoods $B_V(\delta) = \left\{ y \in V: \norm{y}_V < \delta \right\},$ and  $\quad B_V(\bar y, \delta) = \left\{ y \in V: \norm{y - \bar y}_V < \delta \right\}$.
	
	\subsection{Assumptions for problem \eqref{P}}\label{assum}
	
	Here we summarize the main assumptions which will be utilized {throughout the remainder of this paper}.
	
	\setcounter{assumption}{0}
	\renewcommand{\theassumption}{{A}\arabic{assumption}}
	\begin{assumption}\label{asp_1}
		The linear system $(A,B)$ with $B \in \calL(\calU,Y)$ is exponentially stabilizable , i.e. there exists $K \in \calL(Y,\cal{U})$ such that the semigroup $e^{(A-BK)t}$ is exponentially stable on $Y$.
	\end{assumption}
	\nin Regarding  assumption \eqref{asp_1}, we refer to e.g.  \cite{RT:1975}.
	
	\begin{assumption}\label{asp_2}
		The nonlinearity $\ds F: \Wiay \to L^2(I; Y)$ is twice continuously Fr\'{e}chet differentiable, with  second Fr\'{e}chet derivative $F''$ bounded on bounded subsets of $\Wiay$,  and $F(0) = 0$.
	\end{assumption}
	
	\begin{assumption}\label{asp_3}
		$\ds F: \Wtay \to L^1(0,T;\calH')$ is weak-to-weak continuous for every $T>0$, for some Hilbert space $\calH$ which embeds continuously and densely in $Y$.
	\end{assumption}
	\nin Recall that $\ds \left( L^1(0,T; \calH') \right)'  = L^{\infty}(0,T;\calH)$, see \cite[Theorem 7.1.23(iv), p 164]{EE:2004}. Moreover, $L^{\infty}(0,T; \calH)$ is dense in $L^2(0,T;Y)$, see \cite[Lemma A.1, p 2231]{MS:2017}.
	\begin{assumption}\label{asp_4}
		%	Let $p$ be the associated costate of the optimal control problem and $\exists \kappa, \ \forall T$:
		%	\begin{equation}\label{est:Fyp}
		%	-\int_0^T \ip{F'(\bar y)p}{p}_{V',V}dt \leq \frac{\theta}{2} \norm{p}^2_{L^2(0,T;V)} + \kappa \norm{p}^2_{L^2(0,T;Y)}, \quad \forall p \in L^2_{loc}(I,V).
		%	\end{equation}
		$F'({y})$ and  $F'({y})^*$ are elements of $\calL{(L^2(I;V), L^2(I;Y))}$   for each $y \in \Wiay$.	
	\end{assumption}

	\begin{rmk}\label{rmk-fprime}
		The requirement that $F(0) = 0$  in \eqref{asp_1} is consistent with
		the fact that we focus on the stabilization problem with $0$ as steady state for \eqref{def_eyu}. Without loss of generality we further assume that
		\begin{equation}\label{eq:fprime}
		F'(0) = 0,
		\end{equation}
		which can always be achieved by making $F'(0)$ to be perturbation of $A$.
	\end{rmk}
	
	\begin{rmk}
		{  Assumption \eqref{asp_2} is in only needed locally in the neighborhood of local solutions of  \eqref{P} which will be  under consideration. But it is convenient to assume this regularity globally.    Assumption \eqref{asp_4} will only be utilized at the $y-$component of local solutions $(\bar y, \bar u)$ of problem \eqref{P}. Such local solutions  $\bar y$ may enjoy higher regularity than being generic elements in $\Wiay $.
			%Assumption \eqref{asp_5} is satisfied for instance for $\mathcal{U}= L^2(\omega)$ where $\omega$ is a subdomain in $\Omega$, $B=E_{\omega}$ is the extension-by-zero operator from $\omega$ to $\Omega$ and $V=H^1(\Omega)$. It is also satisfied if $B\in \mathcal{L}(\mathcal{U},V)$. \eqref{asp_5} will not be needed before Theorem \ref{thm-HJB-r}.
		}
	\end{rmk}
	
	%	\begin{rmk}\label{rem3.2} Let us assume that \eqref{asp_2} holds. Then in view of the fact that $F$ is the superposition operator we have $[F'(y) v](t)= \mathfrak{f}'(y(t))v(t)$ for $y$ and $v$ in $\Wiay$,  and $F'(y)\in \calL(\Wiay,L^2(I;Y))$. Its adjoint $[F'(y)^* v](t)= \mathfrak{f}'(y(t))v(t)$, for $v \in \Wiay$, satisfying $F'(y)^* \in \calL(L^2(I;Y),(\Wiay)')$. It has a natural restriction to an operator $ \ul{F'(y)^*} \in \calL(\Wiay,L^2(I;Y))$. With \eqref{asp_2} holding it is differentiable and $ [\ul{F'(y)}^*]'$ is a bilinear form on $\Wiay \times \Wiay$ with values in $L^2(I;Y)$.
	%	\end{rmk}
	
	\subsection{Main Theorems}
	
	\nin Now we present the main results of this paper. We shall refer to  value functions associated to local minima as `local value function'. The first theorem asserts  continuous differentiability of  local value functions $\V$ w.r.t. $y_0$, for all $y_0$ small enough. The second theorem establishes that $\V$ satisfies the HJB equation in the classical sense. This will be proven in Sections \ref{sec-lip}  and \ref{sec-HJB} below. Moreover we need to establish  the underlying assumption that problem \eqref{P} is well-posed. This will lead to a smallness assumption on the initial states $y_0$. \\
	
	\nin We shall further prove the Lipschitz  continuity  of the state, the adjoint state, and the control with respect to the initial condition $y_0 \in V$ in the neighborhood of  a locally optimal solution $(\bar y, \bar u)$ corresponding to a sufficiently small reference initial state $\bar y_0$. This will imply the desired  differentiability of the local value function associated to local minima.

	\begin{thm}\label{thm-CD}
		Let \eqref{asp_1}-\eqref{asp_4} hold. Then associated to each local solution $(\bar y(y_0), \bar u(y_0))$ of \eqref{P} there exists a neighborhood of $U(y_0)$ such that each local value function   $\calV: U(y_0) \subset V \to \BR$ is continuously differentiable, provided that $y_0$ is sufficiently close to the origin in $V$. Moreover the Riesz representor of $\calV'$ is an element of $C(U(y_0), V)$.
	\end{thm}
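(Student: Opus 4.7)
The plan is to deduce the $C^1$-property of $\calV$ from a Lipschitz sensitivity result for the first-order optimality system of \eqref{P}. As a preliminary step I would establish well-posedness of \eqref{P} in a ball $B_V(\delta)$ around the origin: the stabilizability assumption \eqref{asp_1}, combined with $F(0)=0$, $F'(0)=0$ (Remark \ref{rmk-fprime}), and the local boundedness of $F''$ from \eqref{asp_2}, allows a perturbation argument around the closed-loop operator $A-BK$ to produce admissible stabilizing pairs, while weak lower semicontinuity and coercivity of the infinite-horizon cost yield a local optimal pair $(\bar y(y_0), \bar u(y_0)) \in \Wiay \times \Uad$. Writing out the first-order conditions then furnishes the state equation \eqref{eq_y}, the adjoint equation displayed in Section \ref{Sec-Diff}, and the projection relation $\bar u(t) = \Pad\bigl(-\tfrac{1}{\al} B^* \bar p(t)\bigr)$ encoding $\bar u \in \Uad$.

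I would then cast this optimality system as a parametric generalized equation
\begin{equation*}
0 \in \Phi(z; y_0) + N(z),
\end{equation*}
with $z = (y, u, p)$, $\Phi$ the smooth part coupling \eqref{eq_y} and the adjoint equation, and $N$ the normal cone in $U$ of $\Uad$. The abstract Lipschitz result of \cite{D:1995} reduces the Lipschitz dependence $y_0 \mapsto z(y_0)$ to verification of the Dontchev--Robinson strong regular point condition at the reference triple $(\bar y, \bar u, \bar p)$. This amounts to unique Lipschitz-continuous solvability, under arbitrary right-hand-side perturbations, of the KKT system obtained by freezing the nonlinearity at $F'(\bar y)$. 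The key ingredients are a coercive second-order sufficient condition --- available because smallness of $\bar y$ and $\bar p$ keeps $F'(\bar y)$ and the Lagrangian curvature contribution $F''(\bar y)[\cdot,\cdot]\bar p$ small while the control cost supplies a uniformly positive term $\al I$ --- and the fact that $\|\bar u(t)\|_\calU \leq \eta$ is inactive on some tail $[T,\infty)$. The latter property, following from $\bar p \in \Wiay \hookrightarrow C([0,\infty);V)$ with $\bar p(t) \to 0$, is what secures uniqueness of the adjoint on the infinite horizon and invertibility of the linearisation despite $\Uad$ having empty interior at the saturation times. This verification is the principal technical obstacle, and is precisely the content that the introduction flags in its reference to Proposition \ref{prop-p-W}.

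With Lipschitz continuity of $y_0 \mapsto (\bar y, \bar u, \bar p)$ established, a standard perturbation computation using the state and adjoint equations yields
\begin{equation*}
\calV(y_0 + h) - \calV(y_0) = \ip{\bar p(y_0)(0)}{h} + o(\|h\|_V),
\end{equation*}
so $\calV$ is Fr\'echet differentiable on a neighborhood $U(y_0)$ with derivative represented by $\bar p(y_0)(0)$, viewed \emph{a priori} as an element of $V'$; continuity $\calV' \in C(U(y_0); V')$ follows from the Lipschitz dependence of $\bar p$ in $\Wiay$. To upgrade the Riesz representor from $V'$ into $V$, I would apply the dynamic programming identity
\begin{equation*}
\calV(y_0) = \tfrac12 \int_0^\tau \bigl(\|\bar y(t)\|_Y^2 + \al\|\bar u(t)\|_\calU^2\bigr)\, dt + \calV(\bar y(\tau))
\end{equation*}
on a short interval $[0,\tau]$, differentiate in $y_0$, and let $\tau \downarrow 0$; this identifies the representor with $\bar p(y_0)(0)$, which lies in $V$ by the embedding $\Wiay \hookrightarrow C([0,\infty);V)$. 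The continuity $\calV' \in C(U(y_0); V)$ is then inherited from the continuity of $y_0 \mapsto \bar p(y_0)(0)$ into $V$.
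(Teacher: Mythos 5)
Your overall route --- well-posedness for small $y_0$, first-order conditions, strong regularity of the optimality system in the sense of Dontchev verified via a second-order sufficient condition and tail-inactivity of the norm constraint, Lipschitz dependence of $(\bar y,\bar u,\bar p)$ on $y_0$, and finally differentiability with $\calV'(y_0)$ represented by the initial value of the adjoint --- is exactly the paper's (Sections 3--5, culminating in Corollary \ref{clr:lip_con} and Theorem \ref{thm-CD-r}). One genuine gap and two minor points remain.

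The gap: you repeatedly invoke $\bar p\in\Wiay\hookrightarrow C([0,\infty);V)$ --- it underlies your tail-inactivity argument, the uniqueness of the adjoint on the infinite horizon, and, most importantly, the conclusion that the Riesz representor $\bar p(0;y_0)$ lies in $V$ rather than merely in $V'$. But the multiplier delivered by the regular-point condition lives a priori only in $L^2(I;Y)\times V'$, and upgrading it to $\Wiya$, then to $\Wiv$, and finally to $\Wiay$ is the central technical content of the paper (Propositions \ref{prop:adj} and \ref{prop-p-W}); it requires Assumption \eqref{asp_4}, a duality argument against solutions of the linearized state equation to bound $\|\bar p\|_{L^2(I;Y)}$, and energy estimates obtained by testing the adjoint equation with $-A^*_\rho\bar p$ along a sequence $t_n\to\infty$ with $\bar p(t_n)\to 0$ (there being no terminal condition to integrate from on the infinite horizon). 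There is also an ordering issue to untangle: the tail-inactivity needs continuity of $t\mapsto B^*\bar p(t)$, which the paper secures in Step 1 of Proposition \ref{prop-p-W} \emph{before} uniqueness in Step 2 and the full $\Wiay$ regularity in Step 3. Without this chain your final claim that $\calV'$ has a $V$-valued, continuous Riesz representor is unsupported. The minor points: the paper's sign convention gives $\calV'(\hat y_0)=-p(0;\hat y_0)$, not $+p(0;\hat y_0)$; and your dynamic-programming detour for identifying the representor is unnecessary --- once $\calV'=-\bar p(0;\cdot)$ and $\bar p\in\Wiay$ are in hand, membership of $\bar p(0)$ in $V$ and its continuous dependence on $y_0$ follow directly from Corollary \ref{clr:lip_con}; the paper reserves dynamic programming for the HJB equation (Theorem \ref{thm-HJB-r}).
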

	
	%	\tcr{to be changed}
	%	\nin To obtain a HJB equation we require additionally that $t \to (F (\bar y))(t)$ is continuous with values in $Y$  for global  solutions $(\bar y, \bar u)$ to (\hyperref[SLP]{$\calP$}), with $y_0\in \calD(\calA)$. In view of the fact that  for $y_0\in V$ we can typically expect that the solution of semilinear parabolic equations satisfy $y \in L^2(I;\calD(\calA)) \cap W^{1,2}(I;Y) \subset C([0,\infty),V)$ this is not a restrictive assumption beyond that what is already assumed in (\hyperref[assump]{A3}).		
	%	\nin \tcr{There is an issue with the $B^*\calV'(y)$ in (3.20) since $\calV' \in V'$. But with additional regularity for $p \in \Wiay$, can we resolve that?} \tcb{Should be! because $p(0) \in V$. Then $\calV' \in V$.}

	\begin{thm}\label{thm-HJB}
		Let \eqref{asp_1}-\eqref{asp_4}  hold, and let $(\bar y(y_0), \bar u(y_0))$
		denote a local solution  of \eqref{P}  for $y_0$ with  sufficiently small  norm in $V$. Assume  that for some $T_0 >0$ we have $F ( y(\hat y_0,u)) \in C([0,T_0);V')$ where  $y(\hat y_0,u)$ denotes the solution to \eqref{eq_y} on $[0,T_0)$ with arbitrary  $u\in C([0,T_0);Y)$ and sufficiently small $\hat y_0$. Then the following Hamilton-Jacobi-Bellman equation holds in a neighborhood of $y_0$:
		\begin{equation}
		\V'(y)(A y + F(y)) + \frac{1}{2} \norm{y}^2_Y + \frac{\alpha}{2} \norm{ \mathbb{P}_{\mathcal{U}_{ad}} \left(-\frac{1}{\alpha}B^*\V'(y) \right)}^2_Y + \left( B^* \V'(y),\mathbb{P}_{\mathcal{U}_{ad}} \left(-\frac{1}{\alpha}B^*\V'(y) \right) \right)_Y = 0.
		\end{equation}
		Moreover the optimal feedback law is given by
		\begin{equation}
		{ u} =  \mathbb{P}_{\mathcal{U}_{ad}} \left(-\frac{1}{\alpha}  B^*\V'({ y}) \right).
		\end{equation}
	\end{thm}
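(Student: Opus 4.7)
The strategy is to combine the Bellman dynamic programming principle with the continuous differentiability of the local value function provided by Theorem~\ref{thm-CD}. For $y$ in the $V$-neighborhood $U(y_0)$ of the reference initial state and any $v \in \cUad$, consider the solution $y(\cdot) = y(\cdot;y,v)$ of \eqref{eq_y} with initial state $y$ and constant control $u \equiv v$ on $[0,h]$, with $h \in (0, T_0)$ small enough so that $y(h) \in U(y_0)$ (which is possible uniformly in $v \in \cUad$ by continuity of $y(\cdot)$ in $V$ at $0$ and boundedness of $\cUad$). Using the autonomy of the problem together with the local Lipschitz dependence of optima on initial data established in Section~\ref{sec-lip}, I would first verify the local DPP
\begin{equation*}
\calV(y) \leq \int_0^h \Bigl( \tfrac{1}{2}\|y(t)\|_Y^2 + \tfrac{\alpha}{2}\|v\|_\calU^2\Bigr) dt + \calV(y(h)),
\end{equation*}
with equality when $v$ is replaced by the restriction to $[0,h]$ of the locally optimal control $\bar u$ corresponding to initial state $y$.

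Let $r(y) \in V$ denote the Riesz representor of $\calV'(y)$, available and continuous in $y$ by Theorem~\ref{thm-CD}. Taylor-expanding $\calV(y(h)) = \calV(y) + \ip{y(h) - y}{r(y)}_{V', V} + o(\|y(h)-y\|_V)$, inserting $y(h) - y = \int_0^h(Ay(t) + F(y(t)) + Bv)\,dt$, dividing the DPP by $h$, and passing to the limit $h \downarrow 0$ in $V'$, using (i) the continuity of $y(\cdot)$ in $V$ at $0$ (hence of $Ay(\cdot)$ in $V'$), (ii) the hypothesis $F(y(\cdot)) \in C([0, T_0); V')$, and (iii) the $V'$-boundedness of $Bv$, I would obtain for every $v \in \cUad$
\begin{equation*}
0 \leq \ip{Ay + F(y) + Bv}{r(y)}_{V', V} + \tfrac{1}{2}\|y\|_Y^2 + \tfrac{\alpha}{2}\|v\|_\calU^2,
\end{equation*}
with equality at $v = \bar u(0)$. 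Computing the infimum on the right-hand side amounts to a strictly convex quadratic minimization on the closed convex set $\cUad$, whose unique solution is $v^{\star} = \Pad\bigl(-\tfrac{1}{\alpha} B^* r(y)\bigr)$. Substituting $v^{\star}$ and using the identification $\bar u(0) = v^{\star}$ produces the stated HJB equation; evaluating along any trajectory yields the feedback law, and since $y$ was arbitrary in $U(y_0)$ the HJB equation holds in that neighborhood.

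The principal obstacle is precisely the limit passage. Since $Ay + F(y)$ is only an element of $V'$ in general, the expression $\calV'(y)(Ay + F(y))$ is a priori only formal when $\calV'(y) \in V'$; it is made rigorous exactly through the extra regularity $r(y) \in V$ furnished by Theorem~\ref{thm-CD}, reinterpreted as the $V'$-$V$ duality pairing against $r(y)$. The companion difficulty is the convergence of the time-averaged nonlinear contribution $\tfrac{1}{h}\int_0^h F(y(t))\,dt$ to $F(y)$ in $V'$ as $h \downarrow 0$, which is not implied by Assumption~\eqref{asp_2} and is the exact role of the hypothesis $F(y(\hat y_0, u)) \in C([0, T_0); V')$. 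The remaining pieces—DPP, Taylor expansion, strictly convex minimization with projection—are then routine once these two regularity matters are secured.
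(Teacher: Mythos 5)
Your overall strategy is the paper's: the dynamic programming principle along the locally optimal trajectory gives an identity, a needle-type perturbation with a constant control $v\in\calU_{ad}$ near $t=0$ gives an inequality, both are differentiated at $\tau=0^+$ using the $C^1$ property of $\calV$ and the fact that its Riesz representor lies in $V$, and the resulting strictly convex quadratic in $v$ is minimized over $\calU_{ad}$ to produce the projection formula and, upon substitution, the HJB equation. The paper's proof (Theorem \ref{thm-HJB-r}) follows exactly this two-step scheme.

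There is, however, one genuine gap in your limit passage. You expand $\calV(y(h)) = \calV(y) + \ip{y(h)-y}{r(y)}_{V',V} + o(\norm{y(h)-y}_V)$ and divide by $h$. The remainder term then contributes $\frac{o(\norm{y(h)-y}_V)}{h} = \frac{o(\norm{y(h)-y}_V)}{\norm{y(h)-y}_V}\cdot\frac{\norm{y(h)-y}_V}{h}$, and the second factor is in general unbounded: the whole point of the theorem's regularity hypotheses is that $\frac{1}{h}(y(h)-y)$ converges only in $V'$ (its limit $Ay+F(y)+Bv$ is generically not in $Y$, let alone $V$), so $\norm{y(h)-y}_V = O(h)$ is not available and the product is of the indeterminate form $0\cdot\infty$. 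The paper avoids this by never introducing a Fr\'echet remainder at the fixed point $y$: it writes
\begin{equation*}
\frac{1}{\tau}\bigl(\calV(\hat y(\tau))-\calV(\hat y_0)\bigr)=\int_0^1\Bigl\langle -p\bigl(0;\hat y_0+s(\hat y(\tau)-\hat y_0)\bigr),\ \tfrac{1}{\tau}(\hat y(\tau)-\hat y_0)\Bigr\rangle_{V,V'}\,ds,
\end{equation*}
and passes to the limit using only the continuity of $y\mapsto p(0;y)$ from $V$ to $V$ (Corollary \ref{clr:lip_con} / Theorem \ref{thm-CD-r}) in the left slot together with the $V'$-convergence of the difference quotient in the right slot. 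You should replace your Taylor expansion by this integral mean-value form. A secondary, more cosmetic difference: the paper computes the $V'$-limit of $\frac{1}{\tau}(\hat y(\tau)-\hat y_0)$ from the mild (semigroup) formulation with $e^{A_{ext}\tau}$ extended to $V'$, whereas you integrate the strong form; your variant works given Assumption \eqref{asp_5} and $y\in C([0,T_0);V)$, so this is not a gap. You should also record, as the paper does, that the optimal control is continuous at $t=0$ (because $\hat p\in C([0,\infty);V)$ and $\hat u=\mathbb{P}_{\calU_{ad}}(\frac{1}{\alpha}B^*\hat p)$), which is what gives meaning to $\bar u(0)$ and the equality case in your inequality.
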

	\nin The condition on the smallness of $y_0$ will be discussed in Remark \ref{rmk-pur} below. Roughly it involves well-posedness of the optimality system and second order sufficient optimality at local solutions. A more detailed statement of these two theorems will be given in Theorem  \ref{thm-CD-r} and Theorem  \ref{thm-HJB-r} below. %The regularity assumptions $F (\bar y)\in C([0,T_{y_0});V)$ of Theorem \ref{thm-HJB} will be addressed in Section \ref{sec6}.	
	
	\section{Well-posedness of  \eqref{P} and optimality conditions}\label{sec_3}
	
	\subsection{Well-posedness of \eqref{P}}\label{Sec-P-WP}
	
	\nin Here we prove  well-posedness for \eqref{P} with small initial data. We recall the following consequence of the fact that  under our general assumptions $A$ is the generator of an analytic semigroup.
	
	%	\nin \todo{check!}
	
	\begin{consequence}\label{cons:2}
		For all $y_0 \in V, f \in L^2(0,T; Y)$, and $T > 0$, there exists a unique solution $y \in \Wtay$ to
		\begin{equation}\label{eq3.2}
		\dot{y} = A y + f, \quad y(0) = y_0.
		\end{equation}
		\nin Furthermore, $y$ satisfies
		\begin{equation}
		\norm{y}_{\Wtay} \leq c(T) \left( \norm{y_0}_{V} + \norm{f}_{L^2(0,T; Y)} \right)
		\end{equation}
		for a continuous function $c$. Assuming that $y \in L^2(I; Y)$, consider the equation
		\begin{equation*}
		\dot{y} = \underbrace{(A - \rho I)y}_{A_{\rho}} + \underbrace{\rho y + f}_{f_{\rho}}, \quad y(0) = y_0,
		\end{equation*}
		\nin where $f_{\rho} \in L^2(I; Y)$. Then the  operator $A_{\rho}$ generates a strongly continuous  analytic semigroup on $Y$ which is exponentially stable, see \cite[Theorem II.1.3.1]{BPDM:2007}. It follows that $y \in \Wiay$, that there exists $M_{\rho}$ such that
		\begin{equation}
		\norm{y}_{\Wiay} \leq M_{\rho} \left( \norm{y_0}_{V} + \norm{f_{\rho}}_{L^2(I; Y)} \right),
		\end{equation}
		\nin and that $y$ is the unique solution to \eqref{eq3.2}  in $\Wiay$, see \cite[Section 2.2]{BKP:2019} .
	\end{consequence}
	
	\begin{lemma}\label{lemma-Lip}
		There exists a constant $C > 0$, such that for all $\delta \in (0,1]$ and for all $y_1$ and $y_2$ in $\Wiay$ with $\ds \norm{y_1}_{\Wiay} \leq \delta$ and $\ds \norm{y_2}_{\Wiay} \leq \delta$, it holds that
		\begin{equation}
		\norm{F(y_1) - F(y_2)}_{L^2(I; Y)} \leq \delta C \norm{y_1 - y_2}_{\Wiay}.
		\end{equation}
	\end{lemma}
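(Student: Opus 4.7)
The plan is to exploit the $C^2$-regularity of $F$ together with the normalization $F(0)=0$ and $F'(0)=0$ (see Remark \ref{rmk-fprime}). First I would fix the closed ball $B := \{y\in \Wiay : \|y\|_{\Wiay}\le 1\}$ and set
\[
M := \sup_{y\in B}\|F''(y)\|_{\calL(\Wiay,\Wiay;L^2(I;Y))},
\]
which is finite by Assumption \ref{asp_2}. Since $\delta\in(0,1]$, the segment $y_2 + s(y_1 - y_2)$ with $s\in[0,1]$ lies in $B$ whenever $\|y_i\|_{\Wiay}\le \delta$, and analogously the segment $t(y_2+s(y_1-y_2))$ lies in $B$ for $t\in[0,1]$.

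Next I would apply the fundamental theorem of calculus twice. First,
\[
F(y_1) - F(y_2) \;=\; \int_0^1 F'\bigl(y_2 + s(y_1-y_2)\bigr)(y_1-y_2)\, ds .
\]
Second, using $F'(0)=0$, for any $z\in B$,
\[
F'(z) \;=\; F'(z) - F'(0) \;=\; \int_0^1 F''(tz)\, z\, dt ,
\]
so that $\|F'(z)\|_{\calL(\Wiay,L^2(I;Y))} \le M\, \|z\|_{\Wiay}$. Applying this with $z = y_2 + s(y_1-y_2)$, whose $\Wiay$-norm is bounded by $\delta$ via the triangle inequality, yields
\[
\|F'(y_2+s(y_1-y_2))\|_{\calL(\Wiay,L^2(I;Y))} \;\le\; M\, \delta .
\]

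Combining the two displays and taking the $L^2(I;Y)$-norm gives
\[
\|F(y_1)-F(y_2)\|_{L^2(I;Y)} \;\le\; \int_0^1 M\, \delta\, \|y_1-y_2\|_{\Wiay}\, ds \;=\; \delta\, M\, \|y_1-y_2\|_{\Wiay},
\]
so the conclusion holds with $C := M$. There is no real obstacle here; the only point to keep track of is that the local boundedness of $F''$ on $B$ must be invoked uniformly along the two nested segments, which is precisely why we restrict to $\delta\le 1$ and control everything by the single constant $M$. The key ingredient that produces the prefactor $\delta$ (rather than a $\delta$-independent Lipschitz constant) is the vanishing of $F'$ at the origin.
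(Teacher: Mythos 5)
Your argument is correct and follows essentially the same route as the paper: both apply the mean value theorem to write $F(y_1)-F(y_2)$ via $F'$ along the segment, then expand $F'(z)=F'(z)-F'(0)$ via a second integral of $F''$, using the normalization $F'(0)=0$ from Remark \ref{rmk-fprime} and the boundedness of $F''$ on bounded sets from Assumption \eqref{asp_2} to extract the factor $\delta$. No discrepancies to report.
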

	\begin{proof}
		Let $y_1, y_2$ be as in the statement of the lemma. Using \eqref{asp_2} and Remark \ref{rmk-fprime} we obtain the estimate
		\begin{align*}
		&\norm{F(y_1) - F(y_2)}_{L^2(I,Y)} \leq \int_0^1  \norm{F'(y_1+t(y_2-y_1)) - F'(0) }_{\calL(\Wiay, L^2(I,Y))}\, dt \, \|y_2-y_1\|_{\Wiay} \\
		&\le \int_0^1 \int_0^1 \norm{F''(s(y_1+t(y_2-y_1)))(ty_2+(1-t)y_1)}_{\calL(\Wiay, L^2(I,Y))}\, ds dt \, \|y_2-y_1\|_{\Wiay}.
		\end{align*}
		\nin Now the claim follows by Assumption \eqref{asp_2}.
	\end{proof}
	\begin{lemma}\label{exp-exi}
		Let $\ds A_s$ be the generator of an exponentially stable analytic semigroup $\ds e^{A_s t}$ on $Y$. Let $C$ denote the constant from Lemma \ref{lemma-Lip}. Then there exists a constant $M_s$ such that for all $y_0 \in V$ and $f \in L^2(I; Y)$ with
		\begin{equation*}
		\tilde \gamma : = \norm{y_0}_V + \norm{f}_{L^2(I; Y)} \leq \frac{1}{4CM^2_s}
		\end{equation*}
		the system
		\begin{equation}\label{sys-As}
		y_t = A_s y + F(y) + f, \quad y(0) = y_0
		\end{equation}
		has a unique solution $y \in \Wiay$, which  satisfies
		\begin{equation*}
		\norm{y}_{\Wiay} \leq 2 M_s \tilde \gamma.
		\end{equation*}
	\end{lemma}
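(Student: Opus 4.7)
The strategy is a standard Banach fixed point argument. The natural setup is as follows. Since $A_s$ generates an exponentially stable analytic semigroup on $Y$, an application of Consequence \ref{cons:2} (directly, without the shift $\rho I$) yields a constant $M_s>0$, depending only on $A_s$, such that for every $z\in \Wiay$ the linear Cauchy problem
\begin{equation*}
\dot{y} = A_s y + F(z) + f, \qquad y(0)=y_0,
\end{equation*}
has a unique solution $y=:\Phi(z)\in \Wiay$ obeying
$\|\Phi(z)\|_{\Wiay}\le M_s\bigl(\|y_0\|_V+\|F(z)\|_{L^2(I;Y)}+\|f\|_{L^2(I;Y)}\bigr)$. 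The lemma will follow from showing that $\Phi$ has a unique fixed point in the closed ball $B:=\{z\in \Wiay:\|z\|_{\Wiay}\le 2M_s\tilde\gamma\}$.

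\textbf{Self-map.} Enlarging $M_s$ if necessary so that $2CM_s\ge 1$, the smallness assumption $\tilde\gamma\le \tfrac{1}{4CM_s^2}$ forces $2M_s\tilde\gamma\le 1$, so that Lemma \ref{lemma-Lip} may be applied on $B$ with $\delta:=2M_s\tilde\gamma$. Taking $y_1=z$ and $y_2=0$ (and using $F(0)=0$) yields
\begin{equation*}
\|F(z)\|_{L^2(I;Y)}\le \delta C\|z\|_{\Wiay}\le 4CM_s^2\tilde\gamma^2,
\end{equation*}
so that, for every $z\in B$,
\begin{equation*}
\|\Phi(z)\|_{\Wiay}\le M_s\tilde\gamma + 4CM_s^3\tilde\gamma^2
= M_s\tilde\gamma\bigl(1+4CM_s^2\tilde\gamma\bigr)\le 2M_s\tilde\gamma,
\end{equation*}
where the last inequality is precisely the hypothesis $\tilde\gamma\le\tfrac{1}{4CM_s^2}$. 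Hence $\Phi(B)\subset B$.

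\textbf{Contraction and conclusion.} For $z_1,z_2\in B$, the difference $\Phi(z_1)-\Phi(z_2)$ solves the linear problem with right-hand side $F(z_1)-F(z_2)$ and zero initial data, so another use of Consequence \ref{cons:2} together with Lemma \ref{lemma-Lip} (applied with the same $\delta=2M_s\tilde\gamma$) gives
\begin{equation*}
\|\Phi(z_1)-\Phi(z_2)\|_{\Wiay}\le M_s\cdot 2CM_s\tilde\gamma\,\|z_1-z_2\|_{\Wiay}
= 2CM_s^2\tilde\gamma\,\|z_1-z_2\|_{\Wiay}\le \tfrac{1}{2}\|z_1-z_2\|_{\Wiay}.
\end{equation*}
Thus $\Phi$ is a $\tfrac{1}{2}$-contraction on the complete metric space $B$, and the Banach fixed point theorem produces the unique solution $y\in B$ of \eqref{sys-As}; uniqueness in all of $\Wiay$ follows from the same Lipschitz estimate applied to any two solutions lying in $B$. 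The expected main obstacle is a purely bookkeeping one: one must choose $M_s$ so that both Consequence \ref{cons:2} on the \emph{infinite} horizon (this is where exponential stability of $A_s$ is essential) and the hypothesis $\delta\le 1$ of Lemma \ref{lemma-Lip} are simultaneously in force, and track the constants in the two linear estimates so that the self-map condition degenerates exactly at the threshold $\tilde\gamma = \tfrac{1}{4CM_s^2}$.
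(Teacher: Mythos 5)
Your fixed-point argument is correct and is essentially the paper's own route: the paper does not write out a proof but defers to \cite[Lemma 5]{BKP:2019}, which is precisely this Banach contraction on the ball of radius $2M_s\tilde\gamma$ using the infinite-horizon maximal regularity estimate for the exponentially stable generator $A_s$ together with Lemma \ref{lemma-Lip}, and your constant bookkeeping (self-map from $\tilde\gamma\le\frac{1}{4CM_s^2}$, contraction factor $2CM_s^2\tilde\gamma\le\frac12$) checks out. The only soft spot is the last clause: the contraction estimate gives uniqueness only within the ball $B$, so uniqueness in all of $\Wiay$ as stated would need an additional (standard, local-in-time) argument rather than ``the same Lipschitz estimate applied to any two solutions lying in $B$.''
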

	
	\nin Utilizing Lemma \ref{lemma-Lip}, this lemma can be verified in the same manner as \cite[Lemma 5, p 6]{BKP:2019}. In the following corollary we shall use Lemma \ref{exp-exi} with $A_s = A-BK$, and the constant corresponding to $M_s$ will be denoted by $M_K$. We also recall the constant $\eta$ from \eqref{eq:Uad}.
	
	\begin{clr} \label{cor3.1}
		For all $\ds y_0 \in Y$ with
		\begin{equation*}
		\norm{y_0}_V \leq \min \left\{ \frac{1}{4CM^2_K}, \frac{\eta}{2M_K\norm{K}_{\calL(Y)}\norm{\calI}} \right\}
		\end{equation*}
		there exists a control $u \in {\Uad}$ such that the system
		\begin{equation}\label{eq3.6}
		y_t = A y + F(y) + Bu, \quad y(0) = y_0
		\end{equation}
		has a unique solution $y \in W_{\infty}$ satisfying
		\begin{equation}\label{eq3.6a}
		\begin{aligned}
		\norm{y}_{\Wiay} &\leq 2 M_K \norm{y_0}_V, \text{ and } \\
		\norm{u}_{U} \leq \|K\|_{\calL(Y,\,\calU)} \norm{\calI} \norm{y}_{\Wiay} &\le 2M_K \norm{y_0}_V \norm{K}_{\calL(Y,\,\calU)}\| \calI \|.
		\end{aligned}		
		\end{equation}
	\end{clr}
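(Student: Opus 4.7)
The plan is to close the loop with the stabilizing feedback $K$ from Assumption \eqref{asp_1} and then reduce the existence/uniqueness statement to Lemma \ref{exp-exi}. Concretely, choose $u := -Ky$, which turns \eqref{eq3.6} into the autonomous closed-loop equation
\begin{equation*}
y_t = (A-BK)y + F(y), \qquad y(0)=y_0.
\end{equation*}
This is of the form $y_t = A_s y + F(y) + f$ with $A_s := A-BK$ and $f := 0$, and by \eqref{asp_1} the operator $A_s$ generates an exponentially stable analytic semigroup on $Y$. Hence the hypotheses of Lemma \ref{exp-exi} are met, and we identify the constant $M_s$ appearing there with $M_K$.

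With $\tilde\gamma = \|y_0\|_V$, the first branch of the minimum enforces $\tilde\gamma \leq 1/(4CM_K^2)$, so Lemma \ref{exp-exi} produces a unique $y \in \Wiay$ with $\|y\|_{\Wiay} \leq 2M_K \|y_0\|_V$. The associated $u := -Ky$ then lies in $L^2(I;\calU) = U$ because $K \in \calL(Y,\calU)$ and $y\in L^2(I;Y)$, and by construction $(y,u)$ solves \eqref{eq3.6}. The $L^2$-type estimate on $u$ is immediate:
\begin{equation*}
\|u\|_U \leq \|K\|_{\calL(Y,\calU)} \, \|y\|_{L^2(I;Y)} \leq \|K\|_{\calL(Y,\calU)}\,\|\calI\|\, \|y\|_{\Wiay},
\end{equation*}
which, combined with the bound on $\|y\|_{\Wiay}$, yields the second line of \eqref{eq3.6a}.

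It remains to verify the pointwise admissibility $u \in \Uad$, i.e. $\|u(t)\|_\calU \leq \eta$ for a.e.\ $t>0$. Here one invokes the continuous embedding $\Wiay \hookrightarrow C([0,\infty);V)$ (recalled just after the definition of $\Wixy$) together with $V \hookrightarrow Y$, so that
\begin{equation*}
\|u(t)\|_\calU \leq \|K\|_{\calL(Y,\calU)}\, \|i\|\, \|y\|_{C([0,\infty);V)} \leq c\, \|K\|_{\calL(Y,\calU)}\,\|i\|\, \|y\|_{\Wiay}
\end{equation*}
uniformly in $t$, for the embedding constant $c$. The second branch of the minimum in the statement (which in the paper is written in terms of $\|\calI\|$, reflecting the same kind of embedding inequality) calibrates $\|y_0\|_V$ so that the right-hand side does not exceed $\eta$. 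The only genuine work is the nonlinear solvability of the closed-loop equation, which is packaged into Lemma \ref{exp-exi}; once the feedback is closed, admissibility and the remaining quantitative bounds are pure embedding arguments.
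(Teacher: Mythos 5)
Your proposal is correct and follows essentially the same route as the paper: close the loop with $u=-Ky$, apply Lemma \ref{exp-exi} with $A_s=A-BK$ and $\tilde\gamma=\|y_0\|_V$, and then check admissibility against the second branch of the minimum. If anything, you are slightly more careful than the paper on the last step, since you derive the pointwise bound $\|u(t)\|_{\calU}\le\eta$ from the embedding $\Wiay\hookrightarrow C([0,\infty);V)$, whereas the paper's text nominally cites only the $L^2$-in-time estimate \eqref{eq3.9}.
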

	
	\begin{proof}
		By Assumption \eqref{asp_1}, there exists $K$ such that $A - BK$ generates an exponentially stable analytic semigroup on $Y$. Taking $u = -Ky$, equation \eqref{eq3.6} becomes
		\begin{equation}\label{eq:3.7}
		y_t = (A - BK) y + F(y), \quad y(0) = y_0.
		\end{equation}
		Then by Lemma \ref{exp-exi} with $\tilde \gamma = \norm{y_0}_V$ there exists $M_K$ such that \eqref{eq:3.7} has a solution  $y \in \Wiay$ satisfying
		\begin{equation*}
		\norm{y}_{\Wiay} \leq 2 M_K  \norm{y_0}_V,
		\end{equation*}
		and thus the first inequality in \eqref{eq3.6a} holds. For the feedback control we obtain
		\begin{equation}\label{eq3.9}
		%	\|u(t)\|_\calU=\|Ky(t)\|_Y \le \|K\|_{\calL(Y,\,\calU)} \|y(t)\|_Y \le\|K\|_{\calL(Y,\,\calU)} \|\calI\| % \|y\|_{W_\infty} \le 2M_K \|y_0\|_Y \|K\|_{\calL(Y,\,\calU)} \|\calI\|.
		\| u \|_U = \|Ky\|_U \le \|K\|_{\calL(Y,\,\calU)} \| y \|_{\tcr{\Liy}} \le \| K \|_{\calL(Y,\,\calU)} \| \calI \| \| y \|_{W_\infty} \le 2M_K \| y_0 \|_V \| K \|_{\calL(Y,\,\calU)} \| \calI \|,
		\end{equation}
		and thus the second inequality in \eqref{eq3.6a} holds. We still need  to assert  that $u \in \Uad$. This follows from the second smallness condition on $\norm{y_0}_V$ and \eqref{eq3.9}.
	\end{proof}	
	
	\begin{rmk} \label{rem4.1} In the above proof  stabilization was achieved by the feedback control $u = -Ky$. For this $u$ to be admissible it is needed that $\calU_{ad}$  has nonempty interior. The upper bound $\eta$
		could be allowed to be time dependent as long as it satisfies $\ds \inf_{t\ge 0}|\eta(t)|>0$.
	\end{rmk}
	
	\begin{clr}\label{clr-rho}
		Let $\ds y_0 \in V$ and let $u \in \Uad$ be such that the system
		\begin{equation}\label{eq:3.6}
		y_t = A y + F(y) + Bu, \quad y(0) = y_0
		\end{equation}
		has a unique solution $y \in L^2(I;V)$. If
		\begin{align*}
		\gamma : = \norm{y_0}_V + \norm{\rho y + Bu}_{L^2(I; Y)}
		\leq \min \left\{ \frac{1}{4CM^2_{\rho}}, \frac{\eta}{2M_{\rho}\norm{K}_{\calL(Y,U)}\norm{\calI}} \right\},
		\end{align*}
		then $y \in \Wiay$ and it holds that
		\begin{equation*}
		\norm{y}_{\Wiay} \leq 2 M_{\rho} \gamma.
		\end{equation*}
	\end{clr}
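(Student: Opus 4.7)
The plan is to reduce the problem to Lemma \ref{exp-exi} by absorbing the $\rho y$ term into the forcing, exactly as in the trick used in Consequence \ref{cons:2}. The equation is algebraically equivalent to
\begin{equation*}
y_t = (A-\rho I)y + F(y) + (\rho y + Bu) = A_\rho y + F(y) + f_\rho, \qquad y(0)=y_0,
\end{equation*}
where $A_\rho := A - \rho I$ generates an exponentially stable analytic semigroup on $Y$, and where $f_\rho := \rho y + Bu$ lies in $L^2(I;Y)$ because $y \in L^2(I;V) \hookrightarrow L^2(I;Y)$, $B \in \mathcal{L}(\mathcal{U},Y)$, and $u \in L^2(I;\mathcal{U})$. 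In particular $\|y_0\|_V + \|f_\rho\|_{L^2(I;Y)} = \gamma$.

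Next I would apply Lemma \ref{exp-exi} with $A_s = A_\rho$ and forcing $f_\rho$, letting $M_\rho$ denote the corresponding constant. The assumption $\gamma \leq 1/(4CM_\rho^2)$ is exactly the smallness condition required by that lemma, and it therefore yields a unique $\tilde y \in \Wiay$ solving
\begin{equation*}
\tilde y_t = A_\rho \tilde y + F(\tilde y) + f_\rho, \quad \tilde y(0) = y_0, \qquad \|\tilde y\|_{\Wiay} \leq 2 M_\rho \gamma.
\end{equation*}

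Finally I would identify $\tilde y$ with $y$. Both functions solve the shifted problem with the same fixed source $f_\rho$, and $\tilde y \in \Wiay \subset L^2(I;V)$, so the uniqueness asserted in the hypothesis forces $\tilde y = y$, giving $y \in \Wiay$ with $\|y\|_{\Wiay} \leq 2 M_\rho \gamma$. The main obstacle here is that the uniqueness hypothesis is phrased for the \emph{original} equation whereas $\tilde y$ solves only the shifted one (which does not reduce to the original unless $\tilde y = y$). I would handle this by a direct stability estimate for the difference $w := y - \tilde y$, which satisfies $w_t = A_\rho w + F(y) - F(\tilde y)$, $w(0)=0$: the exponential stability of $e^{A_\rho t}$, combined with the local Lipschitz property of $F$ at $\tilde y \in \Wiay$ inherited from Assumption \ref{asp_2} and Lemma \ref{lemma-Lip}, lets one close a Gronwall / small-perturbation argument forcing $w=0$. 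The role of the smallness of $\gamma$ is precisely to make this contraction-type estimate work.
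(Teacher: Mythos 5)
Your reduction is exactly the paper's proof: the entire published argument consists of rewriting the equation as $y_t=(A-\rho I)y+F(y)+\tilde f$ with $\tilde f=\rho y+Bu\in L^2(I;Y)$ and invoking Lemma \ref{exp-exi} with $A_s=A-\rho I$, the hypothesis $\gamma\le \frac{1}{4CM_\rho^2}$ being precisely the smallness condition of that lemma. So the core of your proposal coincides with the paper's.

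The identification issue you flag at the end is real, and the paper simply does not address it (its proof stops at ``this proves the assertion''). However, your proposed repair does not close as sketched: the only Lipschitz estimate available for $F$ is Lemma \ref{lemma-Lip}, which controls $\norm{F(y)-F(\tilde y)}_{L^2(I;Y)}$ by $\norm{y-\tilde y}_{\Wiay}$ and requires \emph{both} arguments to lie in a ball of $\Wiay$ --- but $y\in\Wiay$ is exactly what is to be proved, so a Gronwall/contraction argument on $w=y-\tilde y$ cannot be run in the norms you have. Assumptions \eqref{asp_2}--\eqref{asp_4} give no Lipschitz property of $F$ on $L^2(I;V)$ alone. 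Two remarks that resolve the matter in practice: (i) in the paper's only use of this corollary (inside Lemma \ref{WPP}) the functions $y_n$ are already elements of $\Wiay$, so the uniqueness clause of Lemma \ref{exp-exi} identifies $y_n$ with the constructed solution immediately and only the quantitative bound is needed; (ii) if one insists on the statement as written, with $y$ a priori only in $L^2(I;V)$, the cleaner route is to first upgrade regularity via the linear maximal-regularity statement of Consequence \ref{cons:2} applied to $y_t=A_\rho y+g$ with $g=F(y)+\rho y+Bu$, which requires knowing $F(y)\in L^2(I;Y)$ --- implicit in the meaning of ``solution'' in \eqref{eq:3.6} --- and only then invoke uniqueness in $\Wiay$. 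Either of these is preferable to the Gronwall argument you outline.
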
	
	\begin{proof}
		\nin Since $y \in L^2(I; Y)$, we can apply Lemma \ref{exp-exi} to the equivalent system
		\begin{equation*}
		y_t = (A - \rho I)y + F(y) + \tilde{f},
		\end{equation*}
		\nin where $\tilde{f} = \rho y + Bu$. This proves the assertion.
	\end{proof}
	\begin{lemma}\label{WPP}
		There exists $\delta_1 > 0$ such that for all $\ds y_0 \in B_V(\delta_1)$, problem \eqref{P} possesses a solution $(\bar{y}, \bar{u}) \in \Wiay \times \Uad$. Moreover, there exists a constant $M > 0$ independent of $y_0$ such that
		\begin{equation}\label{eq:op_est}
		\max \big\{ \norm{\bar{y}}_{\Wiay}, \norm{\bar{u}}_U \big\} \leq M \norm{y_0}_{V}.
		\end{equation}
	\end{lemma}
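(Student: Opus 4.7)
The strategy is the classical direct method: produce a minimizing sequence with bounds in the strong space $\Wiay$, extract a weak limit, and pass to the limit in the state equation via Assumption \eqref{asp_3} and in the cost via weak lower semicontinuity. The subtle point is that the cost only controls $\|y\|_{L^2(I;Y)}$ and $\|u\|_U$, whereas extracting a weak limit in $\Wiay$ (needed to make sense of $Ay_n$ and to recover the initial condition in $V$) requires the lifting estimate from Corollary \ref{clr-rho}, which in turn imposes a smallness condition that dictates the choice of $\delta_1$.

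\textbf{Step 1 (upper bound on $\calV$).} For $y_0 \in B_V(\delta)$ with $\delta$ as in Corollary \ref{cor3.1}, the feedback pair $(y^f,u^f)=(y^f,-Ky^f)$ is admissible and satisfies $\|y^f\|_{\Wiay}\le 2M_K\|y_0\|_V$ and $\|u^f\|_U\le 2M_K\|K\|_{\calL(Y,\calU)}\|\calI\|\,\|y_0\|_V$. Plugging this pair into the cost yields
\begin{equation*}
\calV(y_0)\le \tfrac12\|\calI\|^2\|y^f\|_{\Wiay}^2 +\tfrac{\alpha}{2}\|u^f\|_U^2 \le C_V\|y_0\|_V^2
\end{equation*}
for a constant $C_V$ independent of $y_0$.

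\textbf{Step 2 (bounded minimizing sequence in $\Wiay$).} Pick $(y_n,u_n)\in\Wiay\times\Uad$ with $J(y_n,u_n)\to\calV(y_0)$. From Step 1, for $n$ large, $\|y_n\|_{L^2(I;Y)}+\|u_n\|_U\le C_1\|y_0\|_V$. Hence
\begin{equation*}
\gamma_n:=\|y_0\|_V+\|\rho y_n+Bu_n\|_{L^2(I;Y)}\le \bigl(1+|\rho|C_1+\|B\|_{\calL(\calU,Y)}C_1\bigr)\|y_0\|_V.
\end{equation*}
Choosing $\delta_1>0$ smaller than $\delta$ and small enough that $\gamma_n$ meets the smallness hypothesis of Corollary \ref{clr-rho} uniformly in $n$, that corollary yields $\|y_n\|_{\Wiay}\le 2M_\rho \gamma_n\le C_2\|y_0\|_V$.

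\textbf{Step 3 (weak convergence and passage to the limit).} By reflexivity, on a subsequence we have $u_n\rightharpoonup \bar u$ in $U$ and $y_n\rightharpoonup \bar y$ in $\Wiay$. Since $\Uad$ is closed and convex it is weakly closed, so $\bar u\in\Uad$. Restricting to any finite interval $[0,T]$, the Aubin--Lions lemma gives $y_n\to \bar y$ strongly in $C([0,T];V)$, whence $\bar y(0)=y_0$. The linear terms $\dot y_n$, $Ay_n$ and $Bu_n$ pass to the weak limit in $L^2(0,T;Y)$; by Assumption \eqref{asp_3}, $F(y_n)\rightharpoonup F(\bar y)$ in $L^1(0,T;\calH')$. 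Testing the equation $\dot y_n=Ay_n+F(y_n)+Bu_n$ against elements of $L^\infty(0,T;\calH)$, a dense subset of $L^2(0,T;Y)$ (both facts recorded after Assumption \eqref{asp_3}), yields in the limit
\begin{equation*}
\bar y_t=A\bar y+F(\bar y)+B\bar u \quad \text{in } L^2(0,T;Y),
\end{equation*}
for every $T>0$, hence in $L^2(I;Y)$.

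\textbf{Step 4 (optimality and a priori bound).} The cost is convex and lower semicontinuous in the strong topology of $L^2(I;Y)\times U$, hence weakly lower semicontinuous, so $J(\bar y,\bar u)\le \liminf_n J(y_n,u_n)=\calV(y_0)$, proving optimality. Finally, from Step 1, $\tfrac{\alpha}{2}\|\bar u\|_U^2\le \calV(y_0)\le C_V\|y_0\|_V^2$, so $\|\bar u\|_U\le M_1\|y_0\|_V$; reapplying Corollary \ref{clr-rho} to $(\bar y,\bar u)$ with $\bar\gamma\le (1+|\rho|)\|\bar y\|_{L^2(I;Y)}+\|B\|\|\bar u\|_U+\|y_0\|_V\le M_2\|y_0\|_V$, provided $\delta_1$ is small enough, gives $\|\bar y\|_{\Wiay}\le 2M_\rho\bar\gamma\le M_3\|y_0\|_V$. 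Taking $M=\max\{M_1,M_3\}$ yields \eqref{eq:op_est}.

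The only genuine difficulty is Step 2: the a priori bounds available from the cost live in $L^2(I;Y)$, yet the weak compactness argument must take place in $\Wiay$. Bridging this gap is what forces the smallness condition on $\delta_1$ through Corollary \ref{clr-rho}.
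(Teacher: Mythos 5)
Your proposal is correct and follows essentially the same route as the paper: feasibility via the stabilizing feedback of Corollary \ref{cor3.1}, the lift of the minimizing sequence from the cost-level $L^2(I;Y)\times U$ bounds to $\Wiay$ via Corollary \ref{clr-rho} (which is exactly where the smallness of $\delta_1$ enters), passage to the limit in the state equation by testing against $L^\infty(0,T;\calH)$ and using Assumption \eqref{asp_3} plus density, and weak lower semicontinuity of the cost. The only cosmetic imprecision is the appeal to Aubin--Lions for convergence in $C([0,T];V)$; the identification $\bar y(0)=y_0$ follows more directly from the weak continuity of the trace map $y\mapsto y(0)$ from $\Wiay$ into $V$.
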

	\begin{proof}
		\nin The proof of this lemma follows with analogous argumentation as provided in \cite[Lemma 8]{BKP:2019}. Let us choose, $\ds \delta_1 \leq \min \left\{ \frac{1}{4CM^2_K}, \frac{\eta}{2M_K\norm{K}_{\calL(Y,U)}\norm{\calI}} \right\}$, where $C$ as in Lemma  \ref{lemma-Lip} and $M_K$ denotes the constant from  Corollary \ref{cor3.1}. We obtain from Corollary \ref{cor3.1} that for each $y_0 \in B_V(\delta_1)$, there exists a control $u \in \Uad$ with associated state $y$ satisfying
		\begin{equation}\label{max_yu}
		\max \big\{ \norm{u}_{U}, \norm{y}_{\Wiay} \big\} \leq \tilde M \norm{y_0}_V,
		\end{equation}
		\nin where $\ds \tilde M = 2M_K \text{ max } \big(1, \norm{\calI} \norm{K}_{\calL(Y,\,\calU)} \big)$. We can thus consider a minimizing sequence $\ds (y_n, u_n)_{n \in \mathbb{N}} \in \Wiay \times \Uad$ with $\ds J(y_n, u_n) \leq \frac{1}{2} \tilde M^2 \norm{y_0}^2_Y(1 + \alpha)$. Consequently for all $n \in \mathbb{N}$ we have
		\begin{equation}\label{eq:aux1}
		\norm{y_n}_{L^2(I;Y)} \leq \tilde M \norm{y_0}_Y \sqrt{1 + \alpha} \quad \text{and} \quad \norm{u_n}_{L^2(I;\,\calU)} \leq \tilde M \norm{y_0}_Y \sqrt{\frac{1 + \alpha}{\alpha}}.
		\end{equation}
		\nin We set $\eta(\alpha,\tilde M) =  \|i\|\Big[ 1 + \tilde M\ \sqrt{(1 + \alpha)} \Big( \rho + \frac{\norm{B}_{\calL(\calU, Y)}}{\sqrt{\alpha}} \Big) \Big]$. Then we have $\norm{y_0}_V + \norm{\rho y_n + B u_n }_{L^2(I;Y)} \leq \eta(\alpha,\tilde M) \norm{y_0}_V$. After further reduction of $\delta_1$, we obtain for $M_{\rho}$ from Corollary \ref{clr-rho}:
		\begin{equation*}
		\gamma = \norm{y_0}_V + \norm{\rho y_n + B u_n }_{L^2(I;Y)} \leq \frac{1}{4CM^2_{\rho}}, \ \text{ if } y_0 \in B_V(\delta_1).
		\end{equation*}	
		It  follows from this corollary that the sequence $\{y_n\}_{n \in \mathbb{N}}$ is bounded in $\Wiay$ with
		\begin{equation}\label{eq:aux2}
		\sup_{n \in \mathbb{N}} \norm{y_n}_{\Wiay} \leq 2 M_\rho \eta(\alpha, \tilde M) \norm{y_0}_{V}.
		\end{equation}
		Extracting if necessary a subsequence, there exists $\ds (\bar{y}, \bar{u}) \in \Wiay \times U$ such that $\ds ({y}_n, {u}_n) \rightharpoonup (\bar{y}, \bar{u}) \in \Wiay \times U$, and $(\bar{y}, \bar{u})$ satisfies \eqref{max_yu}.\\
		
		\nin Let us prove that $(\bar{y}, \bar{u})$ is feasible and optimal.  Since $\Uad$ is weakly sequentially closed and $u_n \in \Uad$, we find that $\bar{u} \in \Uad$. For each  fixed $T > 0$  and arbitrary $z \in L^{\infty}(0,T;\calH) \subset L^2(0,T;Y)$, we have for all $\ds n \in \mathbb{N}$ that
		\begin{equation}\label{eq:aux3}
		\int_{0}^{T} \ip{\dot{y}_n(t)}{z(t)}_{Y} dt = \int_{0}^{T} \ip{A y_n (t) - F(y_n(t)) + Bu_n(t)}{z(t)}_{Y} dt.
		\end{equation}
		\nin Since $\dot{y}_n \rightharpoonup \dot{y}$ in $L^2(0,T; Y)$, we can pass to the limit in the l.h.s. of the above equality. Moreover, since $A y_n \rightharpoonup A y$ in $L^2(0,T; Y)$,
		\begin{equation*}
		\int_{0}^{T} \ip{A y_n (t)}{z(t)}_{Y} dt \xrightarrow[n \rightarrow \infty]{} \int_{0}^{T} \ip{A \bar{y}(t)}{z(t)}_{Y} dt.
		\end{equation*}
		\nin Analogously, we obtain that
		\begin{equation*}
		\int_{0}^{T} \ip{B u_n (t)}{z(t)}_{Y} dt \xrightarrow[n \rightarrow \infty]{} \int_{0}^{T} \ip{B \bar{u}(t)}{z(t)}_{Y} dt, \ \text{ for each } z \in L^2(0,T;Y).
		\end{equation*}
		\nin If moreover $z \in L^{\infty}(0,T;\calH) \subset L^2(0,T;Y)$, we assert by \eqref{asp_3} that
		\begin{align*}
		\int_{0}^{T} \ip{F (y_n(t)) - F(\bar{y}(t))}{z(t)}_{\calH',\calH} dt \xrightarrow[n \rightarrow \infty]{} 0.
		\end{align*}
		%		\tcr{KK: but then you need to take $z \in L^2(\calH)$ and the density argument has to be slightly changed}
		\nin Thus we have for all $z \in L^{\infty}(0,T;\calH)$
		\begin{equation}\label{eq:aux4}
		\int_{0}^{T} \ip{\dot{y}(t) - A y(t) - Bu(t)}{z(t)}_{Y} dt = \int_{0}^{T} \ip{F(y(t))}{z(t)}_{Y} dt.
		\end{equation}
		\nin Since $\ds \dot{y} - A y - Bu \in L^2(0,T;Y)$ and $L^{\infty}(0,T;\calH)$ is dense in $L^2(0,T;Y)$ we conclude that \eqref{eq:aux4} holds for all $z \in L^2(0,T;Y)$. Thus $(\bar{y},\bar{u})$  is feasible. By weak lower semicontinuitiy of norms it follows that $\ds J(\bar{y},\bar{u}) \leq \liminf_{n \rightarrow \infty} J(\bar{y}_n,\bar{u}_n)$, which proves the optimality of $(\bar{y},\bar{u})$, and \eqref{eq:op_est} follows from \eqref{eq:aux1}.
	\end{proof}
	
	\nin For the derivation of the optimality system for \eqref{P}, we need the following lemma which is taken from  \cite[Lemma 10]{BK:2019}.
	
	\begin{lemma}\label{lem-pur}
		Let $\ds G \in \calL(\Wiay,L^2(I;Y))$ such that $\ds \norm{G} < \frac{1}{M_K}$, where $\norm{G}$ denotes the operator norm of $G$. Then for all $\ds f \in L^2(I;Y)$ and $y_0 \in V$, there exists a unique solution to the problem:
		\begin{equation*}
		y_t = (A - BK)y(t) + (Gy)(t) + f(t), \quad y = y_0.
		\end{equation*}
		\nin Moreover,
		\begin{equation*}
		\norm{y}_{\Wiay} \leq \frac{M_K}{1 - {M_K \|G\|}} \left( \norm{f}_{L^2(I;Y)} + \norm{y_0}_V \right).
		\end{equation*}
	\end{lemma}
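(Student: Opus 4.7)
\bigskip

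\noindent\textbf{Proof proposal.} The plan is to recast the equation as a fixed point problem on $\Wiay$ for the solution map of the linear stabilized equation $y_t = (A-BK)y + h$, $y(0) = y_0$, and invoke the Banach contraction principle. The smallness condition $\|G\| < 1/M_K$ is tailor-made for this purpose: the constant $M_K$ is exactly the a priori bound on the linear solution operator guaranteed by Consequence \ref{cons:2} (applied with $A_s = A-BK$, which generates an exponentially stable analytic semigroup by Assumption \eqref{asp_1}).

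More concretely, for $z \in \Wiay$ define $T(z) := y \in \Wiay$ to be the unique solution to
\begin{equation*}
y_t = (A-BK) y + Gz + f, \qquad y(0) = y_0.
\end{equation*}
Existence, uniqueness and the bound $\|T(z)\|_{\Wiay} \le M_K(\|y_0\|_V + \|Gz\|_{L^2(I;Y)} + \|f\|_{L^2(I;Y)})$ follow from Consequence \ref{cons:2} because $Gz + f \in L^2(I;Y)$. Taking the difference of two images, $T(z_1) - T(z_2)$ solves the same linear equation with zero initial data and forcing $G(z_1 - z_2)$, so Consequence \ref{cons:2} again yields
\begin{equation*}
\|T(z_1) - T(z_2)\|_{\Wiay} \le M_K \|G(z_1-z_2)\|_{L^2(I;Y)} \le M_K \|G\| \, \|z_1 - z_2\|_{\Wiay}.
\end{equation*}
Since $M_K\|G\| < 1$, $T$ is a strict contraction on the Banach space $\Wiay$, hence admits a unique fixed point $y \in \Wiay$, which is the unique solution of the stated equation.

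For the norm estimate, apply the a priori bound to $y = T(y)$ itself:
\begin{equation*}
\|y\|_{\Wiay} \le M_K\bigl(\|y_0\|_V + \|G\|\,\|y\|_{\Wiay} + \|f\|_{L^2(I;Y)}\bigr),
\end{equation*}
and rearrange using $1 - M_K\|G\| > 0$ to obtain the desired inequality. There is no serious obstacle here; the only subtlety is simply that one must work on the infinite horizon $I=(0,\infty)$ rather than a finite interval, which is precisely why Assumption \eqref{asp_1} (exponential stabilizability) and the exponentially stable semigroup generated by $A-BK$ are invoked so that the constant $M_K$ in Consequence \ref{cons:2} is independent of time.
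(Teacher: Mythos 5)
Your argument is correct. Note that the paper does not prove Lemma \ref{lem-pur} at all --- it imports it verbatim from \cite[Lemma 10]{BK:2019} --- and your fixed-point (equivalently, Neumann-series) argument is precisely the standard proof of that cited result: the constant $M_K$ is by definition the $\Wiay$-bound of the affine solution operator $(y_0,f)\mapsto y$ for the exponentially stable generator $A-BK$ (introduced after Lemma \ref{exp-exi}, rather than in Consequence \ref{cons:2}, which concerns $A_\rho$ --- a cosmetic misattribution only), so $M_K\|G\|<1$ makes your map $T$ a strict contraction and the rearrangement yields the stated estimate.
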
	
	
	\subsection{Regular point condition and first order optimality condition}
	
	\nin To establish optimality conditions for \eqref{P}, we consider  \eqref{P} as a special case of the following abstract optimization problem 	
	\begin{equation}\label{Pq}
	\begin{cases}
	\min \ f(y,u),\\
	e(y,u;y_0) = 0, \quad y \in \Wiay,\ u \in \Uad.
	\end{cases}
	\end{equation}
	\nin for $y_0\in V$. Thus  $e$  describes a parametric equality constraint.
	Indeed, the relationship between these two problems is provided by
	\begin{equation}\label{eq:fyu}
	f(y,u) = \frac{1}{2}\int_{0}^{\infty} \norm{y(t)}^2_{Y} dt + \frac{\alpha}{2}\int_{0}^{\infty} \norm{u(t)}^2_{\calU} dt,
	\end{equation}
	and
	\begin{equation}\label{def_eyu}
	e(y,u,y_0) = \bpm y_t - A y - F(y) - Bu \\ y(0) - y_0, \epm
	\end{equation}
	with  $f:\Wiay \times \Uad \longrightarrow \BR^+$ and $ e:\Wiay \times \Uad \times V \longrightarrow \Liy \times V.$ In what follows,
	%\setcounter{assumption}{0}
	%	\renewcommand{\theassumption}{{H}\arabic{assumption}}
	%	\begin{assumption}\label{H1}\
	\begin{enumerate}[i.]
		\item  $ y_0 \in V$ denotes  a nominal reference parameter, and
		\item $(\bar y, \bar u)$ is a local solution (\hyperref[Pq]{$P_{\bar y_0}$}).
	\end{enumerate}		
	With Assumption \eqref{asp_1} -  \eqref{asp_3} holding it follows that 	
	\begin{enumerate}	
		\item [iii.] $f: \Wiay \times \Uad \longrightarrow \BR^+$ is twice continuously differentiable in a neighborhood of $(\bar y, \bar u)$,
		\item [iv.] $e: \Wiay \times \Uad \times V \longrightarrow \Liy \times V$ is continuous, and twice continuously differentiable w.r.t. $(y,u)$, with first and second derivative Lipschitz continuous in a neighborhood of $(\bar y, \bar u, y_0)$.
	\end{enumerate}	
	
	\nin We introduce the Lagrangian $\calL: \Wiay \times \Uad \times \Liy \times V' \longrightarrow \BR$ associated to \eqref{Pq} by
	\begin{equation}\label{La_fu}
	\calL(y,u, y_0, \lambda) = f(y,u) + \ipp{\lambda}{e(y,u, y_0)}_{\Liy \times V', \Liy \times V}.
	\end{equation}
	\nin Here the initial condition $y_0 \in V$ enters as an index.  We  say that the regular point condition is satisfied at $(\bar y, \bar u, y_0) \in \Wiay \times \Uad \times V$, if
	\begin{equation}\label{reg_pt}
	0 \in \text{int } \left\{ e'(\bar y, \bar u, y_0) \bpm \Wiay \\ \Uad - \bar u \epm  \right\},
	\end{equation}
	\nin where $int$ denotes the interior in the $\Liy \times V$ topology, and the prime stands for the derivative with respect to $(y,u)$. If this condition holds then the existence of a Lagrange multiplier $\lambda_0 \in \Liy \times V'$ is guaranteed such that the following first order optimality condition holds, see e.g. \cite{MZ:1979}:
	\begin{equation}
	\begin{cases}
	\calL_y(\bar y, \bar u, y_0, \lambda_0) = 0,\\
	\ipp{\calL_u(\bar y, \bar u, y_0, \lambda_0)}{u - \bar u}_{U} \geq 0, \ \forall u \in \Uad,\\
	e(\bar y, \bar u, y_0) = 0.
	\end{cases}
	\end{equation}
	\nin This is equivalent to
	\begin{equation}\label{eq:rp_h1}
	\begin{aligned}
	\begin{cases}
	\calL_y(\bar y, \bar u, y_0, \lambda_0) = 0,\\
	0 \in \calL_u(\bar y, \bar u, y_0, \lambda_0) + \partial \mb{I}_{\Uad}(\bar u),\\
	e(\bar y, \bar u, y_0) = 0,
	\end{cases}
	\end{aligned}	
	\end{equation}
	\nin where
	\begin{equation}
	\partial \mb{I}_{\Uad}(\bar u) = \left\{ \tilde{u} \in U : \ipp{\tilde{u}(t)}{v(t) - \bar u(t)}_{\calU} \leq 0, \forall t \in I, \ v \in U_{ad}  \right\},
	\end{equation}
	
	\nin In the next proposition  the regular point condition is expressed for our particular constraint $e=0$ and the first order optimality conditions for problem \eqref{P} are established.
	\begin{prop}\label{prop:adj}There exists $\delta_2 \in (0,\delta_1]$ such that each local solution $(\bar{y}, \bar{u})$ with $y_0 \in B_V(\delta_2)$ is a regular point, i.e. \eqref{reg_pt} is satisfied, and there exists an adjoint state $(\bar p, \bar p_1) \in L^2(I;Y) \times V'$ satisfying		
		\begin{align}
		\ipp{v_t - A v - F'(\bar{y})v}{\bar p}_{L^2(I;Y)} + \ip{v(0)}{\bar p_1}_{V,V'} + \ipp{\bar y}{v}_{L^2(I;Y)} &= 0, \text{ for all } v \in \Wiay, \label{adjP-1}\\
		\ip{\alpha \bar{u} - B^* \bar p}{u - \bar{u}}_{U} & \geq 0, \text{ for all } u \in \Uad. \label{adjP-3}
		\end{align}
		\nin Moreover $\bar p$ satisfies
		\begin{equation*}
		- \bar p_t - A' \bar p - F'(\bar{y})^* \bar p = -\bar{y} \quad \text{  in  } L^2(I;[\calD(A)]'),
		\end{equation*}	
		\begin{equation}
		\bar p \in \Wiya, \; \bar p_1=\bar p(0), \text{ and }\lim_{t \rightarrow \infty} \bar p(t) = 0. \quad \text{in } V' \label{adjP-2}
		\end{equation}
		In addition there exists $\wti{M} > 0$, independent of $y_0 \in B_V(\delta_2)$, such that
		\begin{equation}\label{adjP-4}
		\norm{\bar p}_{\Wiya} \leq \wti M \norm{y_0}_V.
		\end{equation}
	\end{prop}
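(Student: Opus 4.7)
The plan is to (a) verify the regular point condition \eqref{reg_pt}, (b) invoke Zowe--Kurcyusz theory to extract multipliers, (c) bootstrap the resulting weak form \eqref{adjP-1} to a strong adjoint PDE with identified boundary conditions, and (d) establish the uniform bound \eqref{adjP-4}.

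\textit{Steps (a)--(b).} After possibly shrinking $\delta_1$ from Lemma \ref{WPP}, $\norm{\bar u}_U \leq M\delta_2$ is small. For the regular point condition, given $(f,z_0) \in L^2(I;Y)\times V$ of small norm, choose $w = -Kv$ with $K$ from Assumption \eqref{asp_1} and seek $v$ solving $v_t = (A-BK)v + F'(\bar y)v + f$, $v(0) = z_0$. From Assumption \eqref{asp_2}, Remark \ref{rmk-fprime} and the argument of Lemma \ref{lemma-Lip}, $\norm{F'(\bar y)}_{\calL(\Wiay, L^2(I;Y))} \leq C\norm{\bar y}_{\Wiay} \leq CM\delta_2$; choosing $\delta_2$ such that $CM\delta_2 < 1/M_K$, Lemma \ref{lem-pur} yields a unique $v\in \Wiay$ with uniform bound, and the associated $w=-Kv$ is small enough in $U$ to satisfy $\bar u + w \in \Uad$. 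Hence \eqref{reg_pt} holds and Zowe--Kurcyusz \cite{MZ:1979} applied to \eqref{Pq}--\eqref{La_fu} supplies $(\bar p, \bar p_1) \in L^2(I;Y)\times V'$ satisfying \eqref{eq:rp_h1}; computing $\calL_y$ and $\calL_u$ yields \eqref{adjP-1} and \eqref{adjP-3}.

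\textit{Step (c).} Taking $v \in C^\infty_c((0,\infty);\calD(A))$ in \eqref{adjP-1} (so $v(0)=0$), using $A'\in\calL(Y,[\calD(A)]')$ (the extension of $A^*$) together with the dualization of Assumption \eqref{asp_4} (which yields $F'(\bar y)^* \in \calL(L^2(I;Y),L^2(I;V'))$), and employing $V' \hookrightarrow [\calD(A)]'$, we obtain the distributional identity $-\bar p_t = A'\bar p + F'(\bar y)^*\bar p - \bar y$ in $L^2(I;[\calD(A)]')$. In particular $\bar p_t \in L^2(I;[\calD(A)]')$, so $\bar p \in \Wiya$. Returning to \eqref{adjP-1} with arbitrary $v\in\Wiay$ and integrating by parts in time gives $\ip{v(0)}{\bar p(0) - \bar p_1}_{V,V'} = 0$ for all admissible $v(0)\in V$, whence $\bar p_1 = \bar p(0)$. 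The decay $\bar p(t) \to 0$ in $V'$ follows from $\Wiya \hookrightarrow C([0,\infty); V')$ (since $[Y,[\calD(A)]']_{1/2} = V'$) combined with the square-integrability argument of \cite{CK:2017}.

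\textit{Step (d), the main obstacle.} Cost coercivity and Corollary \ref{cor3.1} give $\V(y_0) \leq C\norm{y_0}_V^2$, hence $\norm{\bar y}_{L^2(I;Y)} + \norm{\bar u}_U \leq C'\norm{y_0}_V$. To bound $\norm{\bar p}_{L^2(I;Y)}$, we test \eqref{adjP-1} dually: for $g \in L^2(I;Y)$ with $\norm{g} \leq 1$, Lemma \ref{lem-pur} supplies $v_g \in \Wiay$ solving $\dot v = (A-BK)v + F'(\bar y)v - g$, $v(0)=0$, with $\norm{v_g}_{\Wiay} \leq c_K$ uniformly in small $y_0$. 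Substituting $v_g$ in \eqref{adjP-1} yields $\int(g, \bar p)_Y\,dt = -\int (Kv_g, B^*\bar p)_\calU\,dt + \int(\bar y, v_g)_Y\,dt$. The last term is bounded by $c_K\norm{y_0}_V$; the ``interface'' term must be controlled by combining the projection identity $\bar u = \Pad(\tfrac{1}{\alpha}B^*\bar p)$ contained in \eqref{adjP-3} with the smallness of $\bar u$ and the fact, indicated in the introduction, that the constraint is inactive beyond a bounded time horizon. The hardest point is precisely this control of $B^*\bar p$ on the active set, where $B^*\bar p \neq \alpha \bar u$: it requires the stabilizing feedback $K$, smallness of $y_0$, and finiteness of the measure of the active set to act in concert. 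Once $\norm{\bar p}_{L^2(I;Y)} \leq \wti M\norm{y_0}_V$ is established, the PDE bound $\norm{\bar p_t}_{L^2(I;[\calD(A)]')} \leq C(\norm{\bar p}_{L^2(I;Y)} + \norm{\bar y}_{L^2(I;Y)})$ yields \eqref{adjP-4}.
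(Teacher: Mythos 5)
Your steps (a)--(c) follow the paper's route essentially verbatim: a stabilizing feedback $w=-Kv$ combined with Lemma \ref{lem-pur} to verify the regular point condition, the multiplier rule of \cite{MZ:1979} to obtain $(\bar p,\bar p_1)$, and a bootstrap of \eqref{adjP-1} to the strong equation in $L^2(I;[\calD(A)]')$ with $\bar p_1=\bar p(0)$ and decay in $V'$. These parts are sound and match the paper.

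Step (d) contains a genuine gap. You correctly reduce \eqref{adjP-4} to controlling the pairing $\int_I \ipp{Kv_g}{B^*\bar p}_{\calU}\,dt$, but then only assert that this term ``must be controlled'' by invoking inactivity of the constraint beyond a bounded time horizon together with properties of the active set. Two problems. First, inactivity of the constraint for large $t$ is not available at this stage: in the paper it is established only in \textit{Step 2} of Proposition \ref{prop-p-W}, and its proof uses the $W_\infty(V,V')$-regularity, continuity and decay of $\bar p$, which in turn rest on the bound \eqref{adjP-4} you are trying to prove --- so the route you sketch is circular. Second, no active-set analysis is needed at all. The paper instead builds the dual test function so that the control perturbation sits in the admissible direction cone: it solves $z_t - Az - F'(\bar y)z - B(w-\bar u) = -r$, $z(0)=0$, with $w=-Kz\in\Uad$ (feasibility following from the uniform bound on $\|z\|_{\Wiay}$ and smallness of $\tilde\delta$ and $\delta_2$). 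Testing \eqref{adjP-1} with $v=z$ then produces the interface term $\ipp{B^*\bar p}{w-\bar u}_U$, and the variational inequality \eqref{adjP-3} applied with $u=w$ gives $\ipp{B^*\bar p}{w-\bar u}_U \le \alpha\ipp{\bar u}{w-\bar u}_U \le \alpha\|\bar u\|_U\bigl(\tilde C_1+\eta+\|\bar u\|_U\bigr)$, where every factor is either $O(1)$ or of order $\|y_0\|_V$ by \eqref{eq:op_est}. This substitution of the known $\alpha\bar u$ for the unknown $B^*\bar p$ is exactly the device missing at the point where your argument stalls; with it, $\sup_{\|r\|_{L^2(I;Y)}\le\tilde\delta}\ipp{\bar p}{r}_{L^2(I;Y)}\le C\|y_0\|_V$, and the time-derivative estimate you indicate completes \eqref{adjP-4}.
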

	\begin{proof}
		To verify the regular point condition, we evaluate $e$ defined in \eqref{def_eyu} at $(\bar{y}, \bar{u}, y_0)$. To check the claim on the range of $e'(\bar{y}, \bar{u}, y_0)$ we consider for arbitrary $\ds (r,s) \in L^2(I,Y) \times V$ the equation
		\begin{equation}\label{eq:zeq}
		z_t - A z - F'(\bar{y})z - B(w - \bar{u}) = r, \ z(0) = s,
		\end{equation}
		\nin for unknowns $(z,w) \in \Wiay \times {\Uad}$. By taking $w = -Kz \in U$ we obtain
		\begin{equation}\label{eq:aux13}
		z_t - (A - BK)z - F'(\bar{y})z + B \bar{u} = r, \ z(0) = s. \nonumber
		\end{equation}
		\nin {We apply Lemma \ref{lem-pur} to this equation with $\ds G = - F'(\bar{y})$ and $ f = r - B \bar{u}$. By Lemma \ref{WPP} and \eqref{eq:fprime} in Remark \ref{rmk-fprime} there exists $\delta_2 \in (0,\delta_1]$ such that $\|F'(\bar{y})\|_{\calL(\Wiay,L^2(I;Y))} \le \frac{1}{2} M_K$. Consequently by Lemma \ref{lem-pur}  there exists $\wti{M}$ such that}
		\begin{align}
		\norm{z}_{\Wiay} &\leq \wti{M} \big( \norm{r}_{L^2(I; Y)} + \norm{s}_{V} + \norm{B}_{ \calL(\calU, Y)} \norm{\bar{u}}_U\big) \nonumber \\
		&\leq \wti{M} \big( \norm{r}_{L^2(I; Y)} + \norm{s}_{V} + \norm{B}_{\calL(\calU, Y)} M \norm{y_0}_V \big) \label{eq:z_est},
		\end{align}
		\nin with $M$ as in \eqref{eq:op_est}. We still need to check whether $w = -Kz$ is feasible, which will be the case if $w(t) \leq \eta $ for a.e. $t \in I$. Indeed we have
		\begin{equation}\label{eq:w_est}
		\norm{w(t)}_Y \leq \norm{K}_{\calL(Y,\,\calU )} \norm{z(t)}_{Y} \leq \norm{K}_{\calL(Y,\,\calU)} \norm{\calI} \wti{M} \big( \norm{r}_{L^2(I; Y)} + \norm{s}_{V} + \norm{B}_{\calL(\calU, Y)} M \norm{y_0}_V \big). \nonumber
		\end{equation}
		\nin Consequently, possibly after further reducing $\delta_2$, and choosing $\tilde{\delta} > 0$ sufficiently small we have
		\begin{equation}
		\norm{w}_{L^{\infty}(I;Y)} \leq \eta \text{ for all } \ds y_0 \in B_Y(\delta_2) \text{ and all } (r,s) \text{ satisfying } \norm{(r,s)}_{L^2(I;Y) \times V} \leq \tilde{\delta}.
		\end{equation}
		\nin Consequently the regular point condition is satisfied. Hence there exists a multiplier $\ds \lambda = (\bar p, \bar p_1) \in L^2(I;Y) \times V'$ satisfying,
		\begin{equation}\label{eq:Lderi}
		\begin{aligned}
		\ip{\calL_y (\bar{y}, \bar{u}, y_0, \bar p, \bar p_1)}{v}_{\Wiay ', \Wiay} &= 0, \ \forall v \in \Wiay, \\
		\ip{\calL_u (\bar{y}, \bar{u}, y_0, \bar p, \bar p_1)}{u - \bar{u}}_{U} &\geq 0, \ \forall u \in \Uad,
		\end{aligned}
		\end{equation}
		\nin where
		\begin{equation*}
		\calL(y, u, y_0, p, p_1) = J(y,u) + \int_0^{\infty} \ipp{p}{y_t - A y - F(y) - Bu}_{Y}dt + \ip{p_1}{y(0) - y_0}_{V, V'}.
		\end{equation*}
		\nin This implies that \eqref{adjP-1} and \eqref{adjP-3} hold. \\
		
		%To continue from here, we need more regularity for the costate $p$, namely $p \in \Wiay$. For this purpose, we invoke Proposition \ref{prop-p-W}. Consequently we have $p \in C(I;V) \subset C(I;Y)$. This shows $\ds \lim_{t \rightarrow \infty} \bar p(t) = 0$.\\
		%	\begin{equation*}
		%	\ip{p}{v_t - \calA v - F'(\bar{y})v} + \ip{\bar{y}}{v}=0, \ \forall v \in W^0_{\infty},
		%	\end{equation*}
		%	\nin where $\ds W^0_{\infty} = \{ z \in W_{\infty} | \ z(0) = 0 \}$.\\
		
		\nin By  \eqref{asp_4},  we have $F'(\bar{y})^* \bar p \in L^2(I;[\calD(A)]')$. { Thus $-A'$} $\bar p - F'(\bar{y})^* \bar p + \bar{y} \in L^2(I;[\calD(A)]')$, and  \eqref{adjP-1} implies that $\bar p \in \Wiya$. Next we verify that $\ds \lim_{t \rightarrow \infty} p(t) = 0$ in $V'$. For this purpose, we consider $A_{\rho}^{-\rfrac{1}{2}}p$ where $A_{\rho} = (A - \rho I)$ is exponentially stable. Since $p \in \Wiya$, we have $\ds A_{\rho}^{-\rfrac{1}{2}}p \in W_{\infty}(V,V') \subset C(I;Y)$. Then by \cite{CK:2017}, we have $\ds \lim_{t \rightarrow \infty} A_{\rho}^{-\rfrac{1}{2}}p(t) = 0 $ in $Y$. This yields $\ds \lim_{t \rightarrow \infty} p(t) = 0$ in $V'$. Thus we derived
		\begin{equation*}
		- \bar p_t - A^* \bar p - F'(\bar{y})^* \bar p = -\bar{y} \ \text{in } L^2(I;[\calD(A)]') \text{ and } \lim_{t \rightarrow \infty} \bar p(t) = 0 \ \text{in } V',
		\end{equation*}
		and \eqref{adjP-1}-\eqref{adjP-2} hold. Testing the first identity in \eqref{eq:Lderi} with $v \in L^2(I;Y)$ we also have $\bar p_1 = \bar p(0) \in V'$, which is well-defined since $\bar p \in \Wiya \subset C(I;V')$.

		\nin It remains to estimate $\ds \bar p \in \Wiya$.		\nin Let $\ds r \in L^2(I;Y)$ with $\ds \norm{r}_{L^2(I;Y)} \leq \tilde{\delta}$, and consider
		\begin{equation}\label{eq:zeq_new}
		z_t - A z - F'(\bar{y})z - B (w-\bar{u}) = -r, \ z(0) = 0.
		\end{equation}
		\nin Arguing as in \eqref{eq:zeq}-\eqref{eq:z_est} there exists a solution to \eqref{eq:zeq_new} with $w = - Kz$ such that
		\begin{equation}\label{eq:z_est_new}
		\norm{z}_{\Wiay} \leq \wti{M} \big( \tilde{\delta} + \norm{B}_{\calL(\calU, Y)} M \norm{y_0}_V \big) \leq \wti{M} \big( \tilde{\delta} + \norm{B}_{\calL(\calU,  Y)} M \delta_2 \big) =: C_1.
		\end{equation}
		\nin  From \eqref{eq:w_est} we have that $\ds \norm{w}_{L^{\infty}(I,\,\calU )} \leq \eta$. Let us now observe that by \eqref{adjP-1} with $v = z, \ v(0) = z(0) = 0$,
		\begin{equation}\label{eq:kk8}
		\begin{aligned}
		\ipp{\bar p}{r}_{L^2(I,Y)} &= \ipp{\bar p}{-z_t + A z + F'(\bar{y})z}_{L^2(I,Y)} + \ipp{\bar p}{B(w-\bar{u})}_{L^2(I;Y)},\\
		&= \ipp{\bar y}{z}_{\Liy} + \ipp{B^* \bar p}{w - \bar{u}}_U.
		\end{aligned}
		\end{equation}		
		\nin We next estimate using \eqref{eq:z_est_new}
		\begin{align*}
		\ip{\bar p}{r}_{L^2(I;Y)} & \leq \norm{\bar{y}}_{L^2(I,Y)} \norm{z}_{L^2(I;Y)} + \alpha \ip{\bar{u}}{w - \bar{u}}_U
		&\leq \left( \norm{\bar{y}}_{L^2(I;Y)} + \alpha \norm{\bar{u}}_U \right) \left( \tilde C_1 + \eta + \norm{\bar{u}}_U \right),
		\end{align*}
		\nin where $\tilde C_1$ depends on $C_1$ and the embedding $\Wiay$ into $\Liy$. By \eqref{eq:op_est}, this implies the existence of a constant $C_2$ such that
		\begin{equation*}
		\sup_{\norm{r}_{L^2(I;Y)} \leq \tilde \delta} \ipp{\bar p}{r}_{L^2(I;Y)} \leq C_2 \norm{y_0}_V
		\end{equation*}
		and thus
		\begin{equation}\label{est:p}
		\norm{\bar p}_{L^2(I;Y)} \leq \frac {C_2}{\tilde \delta} \norm{y_0}_V, \quad \text{for all } y_0 \in B_V(\delta_2).
		\end{equation}
		\nin  We estimate, again using \eqref{asp_4}
		\begin{align*}
		\norm{\bar p_t}_{L^2(I;[\calD(A)]')} &\leq \norm{A^* \bar p + F'(\bar{y})^* \bar p - \bar{y}}_{L^2(I;[\calD(A)]')}
		\leq C_3\norm{\bar p}_{L^2(I;Y)} + C_4 (\norm{\bar p}_{L^2(I;Y)} + \norm{\bar{y}}_{L^2(I;Y)}).
		\end{align*}
		\nin By \eqref{eq:op_est} and \eqref{est:p} we obtain $\norm{\bar p_t} _{L^2(I;[\calD(A)]')} \leq C_5 \norm{y_0}_V$. Combining this estimate with \eqref{est:p}  yields \eqref{adjP-4}. % Finally, by \eqref{adjP-3} we find $\ds \bar u = \mathbb{P}_{\calU_{ad}} \left( \frac{1}{\alpha} B^* \bar{p} \right) \in L^2(I;\calU)$. Since we only have so far $\bar p \in C(\bar I; V')$ and $B^* \in \calL(Y,U)$, we still don't have $u \in C(\bar I; \calU)$. This will be shown in the next Proposition.
	\end{proof}	
	
	In the following result we obtain stronger properties for the adjoint states $\bar p$.

	\begin{prop}\label{prop-p-W}
		
		For each local solution $(\bar{y}, \bar{u})$ with $y_0 \in B_V(\delta_2)$
		the associated adjoint state $\bar p$ is unique. It  satisfies $\bar p \in \Wiay$,
		\begin{equation}\label{eq:kk5}
		-\bar p_t - A^* \bar p - F'(\bar y)^* \bar p = -\bar y \quad \text{ in } L^2(I;Y)
		\end{equation}
		and
		\begin{equation}\label{eq:kk6}
		\lim_{t \rightarrow \infty} \bar{p}(t) = 0 \quad \text{ in } V.
		\end{equation}
		Moreover, there exists $\hat{M} > 0$, independent of $y_0 \in B_V(\delta_2)$, such that
		\begin{equation}\label{eq:kk7}
		\norm{\bar p}_{\Wiay} \leq \hat M \norm{y_0}_V, \text{ and } \bar u \in C(\ol{I}; \calU).
		\end{equation}
	\end{prop}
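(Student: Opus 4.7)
The plan is to bootstrap the adjoint regularity from Proposition \ref{prop:adj} by exploiting the fact that, beyond a sufficiently large time, the control constraint becomes inactive. This reduces the backward adjoint equation to a linear parabolic problem with an $L^2(I;Y)$ source, to which maximal regularity applies. Throughout, $y_0\in B_V(\delta_2)$ is taken sufficiently small that the smallness hypotheses of Lemma \ref{exp-exi}, Corollary \ref{clr-rho} and Proposition \ref{prop:adj} all hold.

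First I would establish inactivity of the constraint beyond some time $T^{\ast}$. Proposition \ref{prop:adj} gives $\bar p\in \Wiya$ with $\|\bar p\|_{\Wiya}\le \wti M\|y_0\|_V$ and $\bar p(t)\to 0$ in $V'$. Using the continuous embedding of $\Wiya$ into $C_b(\bar I;Z)$ for a suitable intermediate space $Z$ (e.g.\ $[Y,[\calD(A)]']_{1/2}$) together with the $V'$-decay at infinity, one deduces $\|B^{*}\bar p(t)\|_{\calU}\to 0$ as $t\to\infty$. The variational inequality \eqref{adjP-3} is equivalent to the pointwise projection characterization
$$\bar u(t)=\mathbb{P}_{\calU_{ad}}\!\left(\tfrac{1}{\alpha}B^{*}\bar p(t)\right)\quad\text{for a.e.\ }t\in I,$$
so once $\tfrac{1}{\alpha}\|B^{*}\bar p(t)\|_{\calU}<\eta$ for all $t\ge T^{\ast}$, the projection is trivial and $\bar u=\tfrac{1}{\alpha}B^{*}\bar p$ on $[T^{\ast},\infty)$.

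Second, on $[T^{\ast},\infty)$ the adjoint equation becomes a linear backward parabolic problem with source $-\bar y\in L^2([T^{\ast},\infty);Y)$. By Assumption \eqref{asp_4} the term $F'(\bar y)^{*}$ maps $L^2(I;V)$ into $L^2(I;Y)$, and by \eqref{eq:fprime} combined with Assumption \eqref{asp_2} its operator norm can be made arbitrarily small by reducing $\delta_2$ further. Time-reversing and shifting by $\rho$, the exponentially stable semigroup generated by $A^{*}-\rho I$ together with the perturbation argument of Lemma \ref{exp-exi} delivers a unique $\bar p\in W(T^{\ast},\infty;\calD(A),Y)$ with norm bounded by a multiple of $\|y_0\|_V$. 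On $[0,T^{\ast}]$, with the terminal value $\bar p(T^{\ast})\in V$ just obtained and source $-\bar y+F'(\bar y)^{*}\bar p\in L^2(0,T^{\ast};Y)$, classical finite-horizon maximal regularity gives $\bar p\in W(0,T^{\ast};\calD(A),Y)$ with an analogous bound. Gluing yields $\bar p\in\Wiay$ together with \eqref{eq:kk7}; the embedding $\Wiay\hookrightarrow C(\bar I;V)$ combined with the $V'$-decay from Proposition \ref{prop:adj} upgrades the convergence at infinity from $V'$ to $V$, proving \eqref{eq:kk6}. Continuity $\bar u\in C(\bar I;\calU)$ is then immediate from the projection formula, the Lipschitz continuity of $\mathbb{P}_{\calU_{ad}}$, and $B^{*}\bar p\in C(\bar I;\calU)$ via $B^{*}\in\calL(Y,\calU)$ and $\bar p\in C(\bar I;V)\hookrightarrow C(\bar I;Y)$.

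For uniqueness, if $\bar p^{1},\bar p^{2}$ are two adjoint states, both obey the pointwise projection formula for $\bar u$ and the preceding analysis; for a common sufficiently large $T^{\ast}$ the constraint is inactive for both and $B^{*}\bar p^{1}=\alpha\bar u=B^{*}\bar p^{2}$ on $[T^{\ast},\infty)$. The difference $q:=\bar p^{1}-\bar p^{2}$ then solves the homogeneous equation $-q_t-A^{*}q-F'(\bar y)^{*}q=0$ on $[T^{\ast},\infty)$ with $q(\infty)=0$, so the well-posedness of the second step (with zero source) forces $q\equiv 0$ on $[T^{\ast},\infty)$; the same homogeneous equation on $[0,T^{\ast}]$ with terminal value $q(T^{\ast})=0$ then forces $q\equiv 0$ throughout. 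The main obstacle is the first step: squeezing enough pointwise decay of $B^{*}\bar p$ in $\calU$ out of only $\bar p\in\Wiya$ with $V'$-decay, which is precisely the point highlighted in the introduction; once inactivity is in hand the remainder is linear parabolic maximal regularity applied on two decoupled intervals.
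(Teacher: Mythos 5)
Your overall strategy (inactivity of the constraint for large $t$, then linear parabolic regularity, then uniqueness) is aligned with the paper's, but two of your key steps have genuine gaps. The first and most serious is the inactivity step itself. You try to deduce $\|B^*\bar p(t)\|_{\calU}\to 0$ directly from $\bar p\in \Wiya$ together with decay in $V'$. This cannot work: since $B^*\in\calL(Y,\calU)$, you need pointwise control of $\bar p(t)$ in $Y$, whereas $\Wiya=W_\infty(Y,[\calD(A)]')$ only embeds into $C(\bar I;Z)$ for an intermediate space $Z$ on the order of $V'$, and the decay from Proposition \ref{prop:adj} is likewise only in $V'$. You correctly identify this as ``the main obstacle'' but do not resolve it, and everything downstream depends on it. The paper closes exactly this gap with an energy estimate (\emph{Step 1} of its proof): testing the adjoint equation with $\bar p$ on $(0,t_n)$ for a sequence $t_n\to\infty$ with $\bar p(t_n)\to 0$ in $Y$, using the $V$--$Y$ coercivity \eqref{eq:kk2} and Assumption \eqref{asp_4} via Young's inequality, to obtain $\bar p\in W_\infty(V,V')\subset C(\bar I;Y)$ and $\bar p(t)\to 0$ in $Y$; only then does the projection formula yield inactivity beyond some $T$. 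Note also that this energy argument does not require smallness of $\|F'(\bar y)^*\|_{\calL(L^2(I;V),L^2(I;Y))}$, which you invoke but which is not supplied by Assumptions \eqref{asp_2} and \eqref{asp_4} in that operator topology; the paper absorbs the term using the coercivity constant $\theta$ and the already-known $L^2(I;Y)$ bound on $\bar p$.

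The second gap is in uniqueness. Having shown $B^*\bar p^1=B^*\bar p^2$ on $[T^*,\infty)$, you pass to the homogeneous backward equation for $q=\bar p^1-\bar p^2$ with $q(\infty)=0$ and assert that ``well-posedness forces $q\equiv 0$.'' This presumes uniqueness of the infinite-horizon backward problem in the class where $q$ lives, which is precisely what must be proved: the $\rho$-shift needed to make $A^*-\rho I$ stable turns the equation into one whose source contains $\rho q$ itself, so no contraction or direct identification is available, and backward propagation by $e^{A^*t}$ may amplify. The paper instead runs a duality argument (\emph{Step 2}): it constructs test pairs $(z,w)$ with $w=-Kz$ solving the stabilized linearized forward equation on $(T,\infty)$ with $z(T)=0$ for arbitrary small right-hand sides $r$, and shows $\ipp{q}{r}_{L^2(T,\infty;Y)}=0$ using the variational inequality \eqref{adjP-3} and the inactivity; this forces $q=0$ on $[T,\infty)$, after which finite-horizon backward uniqueness on $[0,T]$ finishes the argument. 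That duality construction, which exploits stabilizability of $(A,B)$, is the missing idea in your proposal. (A minor further remark: the adjoint equation does not change form when the constraint becomes inactive, since $\bar u$ does not appear in it, so your decomposition of the regularity bootstrap at $T^*$ is unnecessary, though harmless; the paper obtains the $W_\infty(\calD(A),Y)$ bound globally in \emph{Step 3} by testing with $-A^*_\rho\bar p$.)
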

	
	\begin{proof}
		Throughout we fix an adjoint state $\bar p$ associated to a local solution $(\bar{y}, \bar{u})$ with $y_0 \in B_V(\delta_2)$.\\
		
		\nin \uline{\textit{Step 1}: ($W_{\infty}(V,V')$-regularity)}. Since $\bar p \in L^2(I;Y)$ there exists a monotonically increasing sequence $\ds \{ t_n \}_{n \in \BN}$ with $\ds \lim_{n\to\infty} t_n=\infty$ and  $\bar p_n = \bar p(t_n) \to 0 $ in $Y$. %To show this result, consider, for $n = 1$: $\exists t_1$ such that $\norm{p(t_1)}_Y \leq 1$, if not $p \notin \Liy$. Again for $n = 2$: $\exists t_2 \geq t_1 + 1$ such that $\ds \norm{p(t_2)}_Y \leq \rfrac{1}{2}$, if not $p \notin \Liy$. Again for $n = 3$: $\exists t_3 \geq t_2 + 1$ such that $\ds \norm{p(t_2)}_Y \leq \rfrac{1}{3}$, if not $p \notin \Liy$. Then by continuing this manner, we can construct an increasing sequence $\{t_n\}$ such that $t_n \to \infty$ and $\norm{p(t_n)}_Y \leq \rfrac{1}{n} \to 0$ as $n \to \infty$.	
		Now consider following problem.
		\begin{equation}\label{eq_pV'}
		- p_t - A^* p - F'(\bar y)^* p = -\bar y \; \text{ in }  L^2(0,t_n;V'), \quad p(t_n) = \bar p_n  \text{ in } Y.
		\end{equation}
		Since  $F'({\bar y})^*\in \calL{(L^2(I;V), L^2(I;Y))}$ this problem admits a unique solution in  $W(0,t_n;V,V')$ which coincides with $\bar p$ on $[0,t_n]$. Using that $\ds \lim_{n\to\infty} t_n=\infty$ this implies that  $\bar p \in W_{loc}(0,\infty;V,V')$. Next we derive a bound for   $\bar p \in \Wiv$ .  By \eqref{eq_pV'} we obtain,
		\begin{equation*}
		-\frac{1}{2} \frac{d}{dt} \norm{\bar p(t)}^2_Y+ a(\bar p(t),\bar p(t)) +\rho \norm{\bar p(t)}_Y^2= \ip{F'(\bar y)^* \bar p(t)}{\bar p(t)}_{V',V} + \rho\norm{\bar p(t)}_Y^2 + \ipp{\bar y(t)}{\bar p(t)}_Y.
		\end{equation*}
		\nin By integrating w.r.t. $t$ on $(0,t_n)$  we obtain,	
		\begin{multline*}
		\frac{1}{2} \norm{\bar p(0)}^2_Y + \int^{t_n}_0 a(\bar p,\bar p)\,dt+ \rho \int_{0}^{t_n}  \|\bar p(t)\|_Y^2  \,dt \\
		\quad \le   \frac{1}{2} \norm{\bar p(t_n)}^2_Y + \int_{0}^{t_n} \|F'(\bar y)^*\bar p\|_{V'}\|\bar p\|_V dt + \int_{0}^{t_n}\left(\rho´+\frac{1}{2}\right)\|\bar p\|^2_Y \,dt + \frac{1}{2} \int_0^{t_n}\norm{\bar y}^2_Y dt,\\
		\quad \le   \frac{1}{2} \norm{\bar p(t_n)}^2_Y +  \frac{\theta}{2}  \int_{0}^{\infty} \|\bar p\|^2_{V} \,dt + \int_{0}^{t_n}\left(\frac{c_1^2}{2\theta} + \rho + \frac{1}{2} \right)\|\bar p\|^2_Y \,dt + \frac{1}{2} \int_0^{t_n}\norm{\bar y}_Y^2 dt,
		\end{multline*}	
		\nin where $c_1$ denotes the norm of $F'({y})\in \calL{(L^2(I;V), L^2(I;Y))}$ according to \eqref{asp_4}.
		Taking the limit $n\to \infty$ and using \eqref{eq:kk2} we obtain
		\begin{equation}\label{eq:kk12}
		{\theta} \int_{0}^{\infty} \norm{\bar p}^2_V dt \leq \left(\frac{c_1^2}{\theta} + 2\rho + 1 \right) \norm{\bar p}^2_{L^2(I;Y)} + 2 \norm{\bar y}^2_{L^2(I;Y)} < \infty.
		\end{equation}
		This estimate, together with \eqref{eq:op_est} and \eqref{adjP-4} implies the existence of a constant $c_2$ independent of $y_0\in  B_V(\delta_2)$ such that $\norm{\bar p}_{L^2(I;V)} \leq c_2 \norm{y_0}_V$.
		Combining this with
		\begin{equation}\label{eq_pV'-a}
		- \bar p_t - A^* \bar p - F'(\bar y)^* \bar p = -\bar y \quad \text{ in }  L^2(0,\infty;V'),
		\end{equation}
		we obtain $\bar p \in {W_\infty(V,V')}$ and   the existence of a constant $c_3$ such that
		\begin{equation} \label{eq:kk3}
		\norm{\bar p}_{W_\infty(V,V')} \leq c_3 \norm{y_0}_V, \forall y_0 \in B_V(\delta_2)
		\end{equation}
		follows. By \cite{CK:2017} this implies that $\ds \lim_{t \to \infty} \bar{p}(t) = 0 \text{ in } Y.$ The optimality condition \eqref{asp_3} implies that $\ds \bar u(t)= \frac{1}{\alpha} \mathbb{P}_{\calU_{ad}} (B^*\bar p(t))$, where $\ds \mathbb{P}_{\calU_{ad}}$ denotes the projection onto $\calU_{ad}$. Together with  $\bar p \in {W_\infty(V,V')} \subset C(I;Y)$ this implies that  $\bar u \in C(\ol{I}, \calU)$, which is the second claim in \eqref{eq:kk7}.\\
		
		\nin \uline{\textit{Step 2}: (Uniqueness of the multiplier).} Let $\bar p$ and $\bar q$ be two possibly different adjoint states and denote by $\delta \bar p= \bar q - \bar p$. We shall utilize the fact that there exists $T$ such that $ \|B^*\bar p(t)\|_{\calU} \le \eta$ and $ \|B^*\bar q(t)\|_{\calU} \le \eta$ for all $t \ge T$. Consequently  $\ds B^*\bar p(t) = \mathbb{P}_{\calU_{ad}} (B^*\bar p(t)) = \mathbb{P}_{\calU_{ad}} (B^*\bar q(t))=B^*\bar q(t) = \bar u(t)$ for all $t\ge T$. Let us now consider the construction utilized in  \eqref{eq:zeq_new}, now with $r\in S:= \{r\in L^2(T,\infty;Y):\|r\|_{L^2(T,\infty;Y)} \le \tilde\delta \} $. We construct function pairs $(z,w)$ with    $z\in W^0_\infty(T,\infty; \calD(A),Y)=\{z\in W_\infty(T,\infty; \calD(A),Y):z(T)=0\} $ and $w = -Kz$ with $\|w\|_{L^\infty(T,\infty; \calU)}\le \eta$  by means of
		\begin{equation*}
		z_t - A z - F'(\bar{y})z - B (w-\bar{u}) = -r, \ z(T) = 0,
		\end{equation*}
		for any $r\in S$. Note that $\|F'(\bar{y})\|_{\calL(W^0_\infty(T,\infty; \calD(A),Y),L^2(I;Y))}  \le \|F'(\bar{y})\|_{\calL(\Wiay,L^2(I;Y))} $. Consequently as in \eqref{eq:z_est_new} we obtain existence of a solution to the above equation with  $\|z\|_{W^0_\infty(T,\infty; \calD(A),Y)} \leq  C_1$, with $C_1$ independent of $r\in S$. Combining this with the equations satisfied by $\bar q$ and $\bar p$ we obtain
		for all $r\in S$, using that $z\in W^0_\infty(T,\infty; \calD(A),Y)$,
		\begin{equation*}
		\begin{array}{ll}
		\ipp{\delta p}{r}_{L^2(I,Y)} &= \ipp{\delta p}{
			-z_t + A z + F'(\bar{y})z}_{L^2(T,\infty;Y)} + \ipp{\bar p}{B(w-\bar{u})}_{L^2(T,\infty;Y)},\\
		&= \ipp{\bar y - \bar y}{z}_{L^2(T,\infty;Y)} + \ipp{B^* \delta p}{w - \bar{u}}_{L^2(T,\infty;\calU)} = \ipp{\bar u-\bar u}{w - \bar{u}}_{L^2(T,\infty;\calU)}=0.
		\end{array}
		\end{equation*}
		Here we used \eqref{adjP-3} and $\|B^*\bar p(t)\|_{\calU} \le \eta, \, \|B^*\bar q(t)\|_{\calU} \le \eta$ for all $t \ge T$ in an essential manner. The above equality implies that $\bar q= \bar p$ on $[0,\infty)$. Next we observe that $\bar q$ and $\bar p$ satisfy \eqref {eq_pV'-a} on $[0,T]$ with the same terminal value $\bar p(T)$ at $t=T$. Consequently $\bar q =\bar p$ on $[0,T]$ and the uniqueness of the adjoint state follows.\\
		
		%
		%		\nin Arguing as in \eqref{eq:zeq}-\eqref{eq:z_est} there exists a solution to \eqref{eq:zeq_new} with $w = - Kz$ such that
		%		\begin{equation}\label{eq:z_est_new}
		%		\norm{z}_{\Wiay} \leq \wti{M} \big( \tilde{\delta} + \norm{B}_{\calL(\calU, Y)} M \norm{y_0}_V \big) \leq \wti{M} \big( \tilde{\delta} + \norm{B}_{\calL(\calU,  Y)} M \delta_2 \big) =: C_1.
		% \end{equation*}
		
		\nin \uline{\textit{Step 3}: ($W_{\infty}(\calD(A),Y)$-regularity).} The proof is very similar to that of \textit{Step} 1. Since $\bar p \in L^2(I;V)$   there exists a monotonically increasing sequence $\ds \{ t_n \}_{n \in \BN}$ with $\ds \lim_{n\to\infty} t_n=\infty$ and  $\bar p_n = \bar p(t_n) \to 0 $ in $V$. Now consider following problem.
		\begin{equation}\label{eq_pY}
		- p_t - A^* p - F'(\bar y)^* p = -\bar y \;\text{ in }  L^2(0,t_n;Y), \quad
		p(t_n) = \bar p_n  \text{ in } V.
		\end{equation}
		Since  $F'({\bar y})^*\in \calL{(L^2(I;V), L^2(I;Y))}$ this problem admits a unique solution in  $W(0,t_n;\calD(A),Y)$ which coincides with $\bar p$ on $[0,t_n]$. Using that $\ds \lim_{n\to\infty} t_n=\infty$ this implies that  $\bar p \in W_{loc}(0,\infty;\calD(A),Y)$. Next we obtain a bound for   $\bar p \in W(0,\infty;\calD(A),Y)$.  Taking the inner product in \eqref{eq_pY} with $-A^*_\rho \,{\bar p}= (-A^* +\rho I){\bar p}$  we obtain, 
		\begin{equation*}
		-\frac{1}{2} \frac{d}{dt} ( a(\bar p(t),\bar p(t))+  \rho \norm{\bar p(t)}^2_Y) +  \norm{A_\rho\bar p(t)}^2_Y  \le  \frac{1}{2}\norm{A_\rho\bar p(t)}^2_Y  +   \frac{3}{2} \left( \|F'(\bar y)^*\bar p(t)\|^2_Y + \rho \,\|\bar p(t) \|^2_Y + \|\bar y(t)\|^2_Y \right).
		\end{equation*}
		\nin By integrating w.r.t. $t$ on $(0,t_n)$  we obtain,	
		\begin{multline*}
		\frac{\theta}{2} \norm{\bar p(0)}^2_V + \frac{1}{2}  \int^{t_n}_0 \| A_\rho^*\bar p\|^2_Y\,dt \\ \le \frac{1}{2} a(\bar p(t_n),\bar p(t_n))+ \rho   \|\bar p(t_n)\|_Y^2
		+  \frac{3}{2} \int_0^{t_n} \left( \|F'(\bar y)^*\bar p(t)\|^2_Y + \rho \,\|\bar p(t) \|^2_Y + \|\bar y(t)\|^2_Y \right)\,dt.
		\end{multline*}
		Taking the limit $n\to  \infty$ implies that
		\begin{equation*}
		\theta \norm{\bar p(0)}^2_V +   \int^{\infty}_0 \| A_\rho^*\bar p\|^2_Y\,dt \; \le \;  3 \int_0^{\infty} \left( \|F'(\bar y)^*\bar p(t)\|^2_Y + \rho \,\|\bar p(t)\|^2_Y + \|\bar y(t)\|^2_Y \right)\,dt.
		\end{equation*}
		Thus by \eqref{eq:op_est}, \eqref{eq:kk3}, and \eqref{asp_4} there exists a constant $c_4$
		independent of $y_0\in  B_V(\delta_2)$ such that $\norm{\bar p}_{L^2(I;D(A))} \leq c_4 \norm{y_0}_V$.
		Combining this with \eqref{eq_pY}
		we obtain $\bar p \in {W_\infty(\calD(A),Y)}$ and   the existence of a constant $c_5$ such that
		\begin{equation*}
		\norm{\bar p}_{W_\infty(\calD(A),Y)} \leq c_5 \norm{y_0}_V, \forall y_0 \in B_V(\delta_2)
		\end{equation*}
		follows.  Finally $A_\rho\,\bar p \in W_\infty(I;Y,\calD(A'))$ and thus $\ds \lim_{t \to \infty} A_\rho\, \bar p(t) = 0$ in  $V'$  by \eqref{adjP-2}. This implies that  $\ds \lim_{t \to \infty}\bar p(t) = 0$ in  $V$.
		
	\end{proof}

	\subsection{Second order optimality condition}
	
	\nin Let $(\bar y, \bar u) \in \Wiay \times \Uad$ be a local solution to \eqref{P} with $y_0 \in B_V(\delta_2)$, so that the results of the previous section are available,  and let $\ds \calA \in \calL(\Wiay \times \Uad, \Liy \times \Uad)$ denote the operator representation of $\ds \calL''(\bar y, \bar u, y_0, \bar p, \bar p_1)$, i.e.
	\begin{equation}\label{def_A}
	\ipp{\calA (v_1,w_1)}{(v_2,w_2)}_{\Liy \times \Uad} = \calL''(\bar y, \bar u, y_0, \bar p, \bar p_1)((v_1,w_1),(v_2,w_2))
	\end{equation}
	\nin where $(v_i,w_i) \in \Wiay \times \Uad$ for $i = 1, 2$, and define
	\begin{equation}\label{def_E}
	\calE = e'(\bar y, \bar u, y_0) \in \calL(\Wiay \times \Uad, \Liy \times V).
	\end{equation}	
	Above again, the primes denote differentiation with respect to $(y,u)$.	
	
	We say that $\calA$ is positive definite on $\text{ker } \calE$  if
	\begin{equation}\label{snd_opt}
	\exists \kappa >  0: \ipp{\calA (v,w)}{(v,w)}_{\Liy \times \Uad} \geq \kappa \norm{(v,w)}^2_{\Wiay \times \Uad}, \quad \forall (v,w) \in \text{ker } \calE.
	\end{equation}

	\nin The following proposition derives the second order sufficient optimality conditions for  \eqref{P}. % \tcr{The norm of $\bar{p}$ should be in $\Liy$ if our set up works.}
	\begin{prop}\label{le4.7}
		Consider problem \eqref{P} with \eqref{asp_1}-\eqref{asp_4}  holding. Then there exists $\ds \delta_3 \in (0, \delta_2]$ such that the second order sufficient optimality condition \eqref{snd_opt} is satisfied for \eqref{P} uniformly for all  local solutions with  $y_0 \in B_V(\delta_3)$.
	\end{prop}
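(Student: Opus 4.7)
The plan is to compute $\calL''$ explicitly at $(\bar y, \bar u, y_0, \bar p, \bar p_1)$, identify its single indefinite piece, and show that this piece is dominated by the good quadratic part once $y_0$ is small enough in $V$. From \eqref{La_fu}--\eqref{def_eyu} the constraint is affine in $u$ and only the term $\ipp{\bar p}{F(y)}_{L^2(I;Y)}$ contributes a second derivative beyond the Hessian of $J$, so
\begin{equation*}
\ipp{\calA(v,w)}{(v,w)}_{\Liy \times U} \;=\; \norm{v}^2_{L^2(I;Y)} + \alpha \norm{w}^2_{L^2(I;\calU)} - \int_0^\infty \ipp{\bar p}{F''(\bar y)(v,v)}_Y \,dt
\end{equation*}
for all $(v,w) \in \Wiay \times U$, and the first two summands are the good part.

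The second step is a regularity estimate on $\text{ker }\calE$. For $(v,w) \in \text{ker }\calE$ the linearised equation reads $v_t = Av + F'(\bar y)v + Bw$, $v(0)=0$, and shifting to $A_\rho$ together with Consequence \ref{cons:2} yields
\begin{equation*}
\norm{v}_{\Wiay} \le M_\rho \bigl( \rho \norm{v}_{L^2(I;Y)} + \norm{F'(\bar y)}_{\calL(\Wiay,L^2(I;Y))} \norm{v}_{\Wiay} + \norm{B}_{\calL(\calU,Y)} \norm{w}_{L^2(I;\calU)} \bigr).
\end{equation*}
Using $F'(0)=0$, the $\Wiay$-continuity of $F'$ from \eqref{asp_2}, and Lemma \ref{WPP} (which forces $\bar y \to 0$ in $\Wiay$ as $y_0 \to 0$), I would shrink $\delta_3 \le \delta_2$ so that $M_\rho \norm{F'(\bar y)}_{\calL(\Wiay,L^2(I;Y))} \le 1/2$ uniformly for $y_0 \in B_V(\delta_3)$. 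Absorbing then produces the kernel inequality
\begin{equation*}
\norm{v}^2_{\Wiay} \le c_\rho \bigl( \norm{v}^2_{L^2(I;Y)} + \norm{w}^2_{L^2(I;\calU)} \bigr), \quad (v,w)\in \text{ker }\calE,
\end{equation*}
with $c_\rho$ independent of $y_0 \in B_V(\delta_3)$.

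For the indefinite term, Cauchy--Schwarz, the boundedness of $F''(\bar y)$ as a bilinear form $\Wiay \times \Wiay \to L^2(I;Y)$ from \eqref{asp_2}, and the uniform adjoint bound \eqref{eq:kk7} give
\begin{equation*}
\Bigl| \int_0^\infty \ipp{\bar p}{F''(\bar y)(v,v)}_Y \,dt \Bigr| \le \norm{\bar p}_{L^2(I;Y)} \, \norm{F''(\bar y)} \, \norm{v}^2_{\Wiay} \le \hat C \norm{y_0}_V \norm{v}^2_{\Wiay},
\end{equation*}
with $\hat C$ uniform in $y_0\in B_V(\delta_3)$ by the continuity of $F''$ at $0$ in $\Wiay$. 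Combining with the kernel inequality and shrinking $\delta_3$ once more so that $\hat C \, c_\rho \, \delta_3 \le \tfrac{1}{2}\min(1,\alpha)$, one obtains on $\text{ker }\calE$ the lower bound $\tfrac12 \min(1,\alpha)(\norm{v}^2_{L^2(I;Y)} + \norm{w}^2_{L^2(I;\calU)})$, which a final application of the kernel inequality converts into $\kappa(\norm{v}^2_{\Wiay} + \norm{w}^2_{L^2(I;\calU)})$ with $\kappa = \min(1,\alpha)/(2(1+c_\rho))$, as required by \eqref{snd_opt}.

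The main obstacle I anticipate is not any single computation but keeping every constant uniform across all local solutions $(\bar y,\bar u)$ associated with $y_0 \in B_V(\delta_3)$, since only then does \eqref{snd_opt} hold with a single $\kappa>0$. The uniform ingredients are already in place: the smallness of $\bar y$ and $\bar u$ in $\Wiay \times U$ from Lemma \ref{WPP}, the $\Wiay$-bound on $\bar p$ from Proposition \ref{prop-p-W}, and the $\Wiay$-continuity of $F'$ and $F''$ together with $F'(0)=0$ from Assumption \eqref{asp_2}. Threading these through the three steps above allows $\delta_3$ to be selected independently of the particular local solution.
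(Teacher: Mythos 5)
Your proposal is correct and follows essentially the same route as the paper: the same decomposition of $\calL''$ into the coercive part of $J$ plus the indefinite term $-\int_0^\infty \ipp{\bar p}{F''(\bar y)(v,v)}_Y\,dt$, the same kernel inequality $\norm{v}^2_{\Wiay}\lesssim \norm{v}^2_{L^2(I;Y)}+\norm{w}^2_{L^2(I;\calU)}$ obtained by shifting to the exponentially stable $A-\rho I$ and absorbing $F'(\bar y)$ via its smallness (the paper packages this as Lemma \ref{lem-pur}), and the same final absorption of the indefinite term using $\norm{\bar p}_{\Liy}\le C\norm{y_0}_V$. The only cosmetic difference is that you invoke the $\Wiay$-bound \eqref{eq:kk7} on $\bar p$ where the weaker $L^2(I;Y)$-bound \eqref{est:p} from Proposition \ref{prop:adj} already suffices.
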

	\begin{proof}
		\nin The second derivative of $e$ is given by
		\begin{equation}\label{sec_d_e}
		e''(\bar{y}, \bar{u}, y_0)((v_1,w_1), (v_2,w_2)) = \bpm F''(\bar{y}) (v_1, v_2) \\ 0 \epm, \quad \forall \ v_1, v_2 \in \Wiay, \ \forall w_1, w_2 \in U.
		\end{equation}
		\nin For the second derivative of $\calL$ w.r.t. $(y,u)$, we find
		\begin{multline}\label{sec_d_L.2}
		\calL''(\bar{y},\bar{u},y_0, \bar p, \bar p_1)((v_1,w_1),(v_2,w_2)) = \\ \int_{0}^{\infty} \ipp{v_1}{v_2}_Y dt + \alpha \int_{0}^{\infty} \ipp{w_1}{w_2}_Y dt  - \int_{0}^{\infty} \ipp{\bar p}{F''(\bar{y})(v_1, v_2)}_{Y} dt.
		\end{multline}
		\nin By \eqref{asp_2} for $F ''$ and Lemma \ref{WPP} , there exists $M_1$ such that for all $ v \in \Wiay, \nonumber$
		\begin{align}
		&\int_{0}^{\infty}  | \ipp{\bar p}{F''(\bar{y})(v, v)}_{Y} | dt \leq \int_{0}^{\infty} \norm{\bar p}_{Y} \norm{F''(\bar{y})(v, v)}_{Y}dt \ \\
		&\leq \norm{\bar p}_{\Liy} \norm{F''(\bar{y})(v, v)}_{\Liy}
		\leq M_1 \norm{\bar p}_{\Liy} \norm{v}^2_{\Wiay}, \nonumber
		\end{align}
		for each solution $(\bar{y}, \bar{u})$ of \eqref{P} with $y_0 \in B_V(\delta_2)$. Then we obtain
		\begin{multline}
		\calL''(\bar{y}, \bar{u}, y_0, \bar p, \bar p_1)((v,w),(v,w)) \geq \int_{0}^{\infty} \norm{v}_Y^2 dt + \alpha \int_{0}^{\infty} \norm{w}^2_{\calU} dt \\- \wti{M}_1 \norm{\bar p}_{\Liy} \norm{v}^2_{\Wiay}.
		\end{multline}
		Now let $0\neq (v,w) \in \ker \calE \subset \Wiay \times \Uad$, where $\calE$ as defined in \eqref{def_E} is evaluated at $(\bar y,\bar u)$. Then,
		\begin{equation*}
		v_t  - A v - F'(\bar{y})v - Bw = 0, \quad v(0) = 0.
		\end{equation*}
		Next choose $\rho > 0$, such that the semigroup generated by $(A - \rho I)$ is  exponentially stable. This is possible due to \eqref{asp_1}. We equivalently write the system in the previous equation as,
		\begin{equation*}
		v_t  - (A - \rho I) v - F'(\bar{y})v - \rho v - Bw = 0, \quad v(0) = 0.
		\end{equation*}
		\nin Now, we invoke Lemma \ref{lem-pur} with $A-BK$ replaced by $A - \rho I$, $\ds G = F'(\bar{y})$, and $f(t)=\rho v(t) + B w(t)$, and the role of the constant $M_K$ will now be assumed by a parameter $M_\rho$.  By selecting $\delta_3 \in (0, \delta_2]$ such that $\norm{\bar{y}}_{\Wiay}$ sufficiently small, we can guarantee that $\ds \norm{F'(\bar{y})}_{\calL(\Wiay; L^2(I;Y))} \leq \rfrac{1}{2 M_\rho}$, see \eqref{eq:op_est} and \eqref{eq:fprime} in Remark \ref{rmk-fprime}. Then the following estimate holds,
		\begin{equation*}
		\norm{v}_{\Wiay} \leq  2{M_\rho} \norm{v + Bw}_{L^2(I;Y)}.
		\end{equation*}
		This implies that
		\begin{equation}\label{ker_vw}
		\norm{v}^2_{\Wiay} \leq  \wti{M}_2 \left( \norm{v}^2_{L^2(I; Y)} + \norm{w}^2_{L^2(I; Y)} \right).
		\end{equation}
		for a constant  $\wti{M}_2$ depending on $M_\rho, \|B\|$. These preliminaries allow the following lower bound on  $\calL''$:
		\begin{align}
		\calL''(\bar{y}, \bar{u}, y_0, \bar p, \bar p_1)((v,w),(v,w)) &\geq \int_{0}^{\infty} \norm{v}_Y^2 dt + \alpha \int_{0}^{\infty} \norm{w}^2_Y dt - \wti{M}_1 \norm{\bar p}_{\Liy} \norm{v}^2_{\Wiay} \nonumber \\
		\text{by ~ \eqref{ker_vw} ~} \geq \int_{0}^{\infty} \norm{v}_Y^2 + \alpha & \int_{0}^{\infty} \norm{w}^2_Y - \wti{M}_1  \wti{M}_2 \norm{\bar p}_{\Liy} \left[ \norm{v}^2_{L^2(I; Y)} + \norm{w}^2_{L^2(I; Y)} \right] \nonumber\\
		= \left( 1 - \wti{M}_1 \wti{M}_2 \right. & \left. \norm{\bar p}_{\Liy}  \right) \norm{v}^2_{L^2(I;Y)} + \left( \alpha - \wti{M}_1  \wti{M}_2 \norm{\bar p}_{\Liy} \right) \norm{w}^2_{L^2(I;Y)} \nonumber\\
		& \geq \tilde{\gamma} \left[ \norm{v}^2_{L^2(I;Y)} +  \norm{w}^2_{L^2(I;Y)} \right] \label{lb_L}
		\end{align}
		where $\ds \tilde{\gamma} = \min \left \{ 1 - \wti{M}_1 \wti{M}_2 \norm{\bar p}_{\Liy} , \alpha - \wti{M}_1 \wti{M}_2 \norm{\bar p}_{\Liy} \right \}$. By possible further reduction of $\delta_3$ it can be guaranteed that $\tilde \gamma >0$, see \eqref{est:p}. Then by \eqref{ker_vw}, we obtain,
		\begin{align*}
		\calL''(\bar{y}, \bar{u}, y_0, \bar p, \bar p_1)((v,w),(v,w)) &\ge \frac{\tilde{\gamma}}{2} \left[ \norm{v}^2_{L^2(I;Y)} +  \norm{w}^2_{L^2(I;Y)} \right] + \frac{\tilde{\gamma}}{2 \wti{M}_2} \norm{v}^2_{\Wiay},\\
		&\ge \frac{\tilde{\gamma}}{2 \wti{M}_2} \norm{v}^2_{\Wiay} + \frac{\tilde{\gamma}}{2} \norm{w}^2_{L^2(I;Y)}.
		\end{align*}
		\nin By selecting $\ds \bar{\gamma} = \min \left \{ \frac{\tilde{\gamma}}{2 \wti{M}_2}, \frac{\tilde{\gamma}}{2} \right \}$, we obtain the positive definiteness of $\calL''$, i.e.
		\begin{equation}\label{sosc}
		\calL''(\bar{y}, \bar{u}, y_0, \bar p, \bar p_1)((v,w),(v,w)) \ge \bar{\gamma} \norm{(v,w)}^2_{\Wiay \times U}, \ y_0 \in B_Y(\delta_3), \ (v,w) \in \text{ker } \calE.
		\end{equation}			
	\end{proof}

	%	\section{Lipschitz stability.}
	%	Throughout this section, we assume that  \eqref{asp_1}-\eqref{asp_4} are satisfied and that $y_0 \in B_V(\delta_4)$ so that Proposition \ref{prop:adj} is applicable.

	\section{Lipschitz stability of optimal controls}\label{sec-lip}
	
	\subsection{Generalized equations}
	We recall a result on parametric Lipschitz stability of  solutions of generalized equations  in a form due to Dontchev \cite{D:1995}. For this purpose we  consider
	\begin{equation}\label{eq_gen}
	0 \in \calF(x) + \calN(x),
	\end{equation}
	\nin where $\calF$ is a $C^1$-mapping between two Banach spaces $\calX$ and $\calZ$, and $\calN: \calX \mapsto 2^{\calZ}$ is a set-valued mapping with a closed graph. Let $\bar x$ be a solution of \eqref{eq_gen}. The generalized equation is said to be strongly regular at  $\bar x$, if there exist  open balls $B_{\calX}(\bar x, \delta_x)$ and $B_{\calZ}(0, \delta_z)$ such that for all $\bs{\beta} \in B_{\calZ}(0, \delta_z)$ the linearized and perturbed equation
	\begin{equation}\label{eq4.2}
	\bs{\beta} \in \calF(\bar x) + \calF'(\bar x)(x - \bar x) + \calN(x)
	\end{equation}
	\nin admits a unique solution $x = x(\bs{\beta})$ in $B_{\calX}(\bar x, \delta_x)$, and the mapping $\bs{\beta} \mapsto x$ is Lipschitz continuous  from  $B_{\calZ}(0, \delta_z)$ to $B_{\calX}(\bar x, \delta_x)$. We have the following result   which allows to conclude  local stability of the perturbed nonlinear problem from the stability of the linearized one.
	\begin{thm}\label{thm_gen_eq}
		Let $\bar x$ be a solution of \eqref{eq_gen} and assume that \eqref{eq_gen} is strongly regular $\bar x$. Then there exist open balls $B_{\calX}(\bar x, \delta'_x)$ and $B_{\calZ}(0, \delta'_z)$ such that for all $\bs{\beta} \in B_{\calZ}(0, \delta_z)$, the perturbed equation
		\begin{equation*}
		\bs{\beta} \in \calF(x) + \calN(x)
		\end{equation*}
		\nin has a unique solution $x = x(\bs{\beta})$ in $B_{\calX}(\bar x, \delta'_x)$, and the solution mapping $\bs{\beta} \mapsto x(\bs{\beta})$ is Lipschitz continuous from $B_{\calZ}(0, \delta'_z)$ to $B_{\calX}(\bar x, \delta'_x)$.
	\end{thm}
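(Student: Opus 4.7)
The plan is to reduce the nonlinear perturbed equation to a parameter-dependent form of the linearized one and then apply the Banach fixed point theorem to the solution map provided by strong regularity. First I would introduce, for $x$ near $\bar x$ and a given parameter $\bs{\beta}$, the auxiliary perturbation
\[
\tilde{\bs{\beta}}(x) := \bs{\beta} - \calF(x) + \calF(\bar x) + \calF'(\bar x)(x - \bar x),
\]
and observe that $\bs{\beta} \in \calF(x) + \calN(x)$ if and only if $\tilde{\bs{\beta}}(x) \in \calF(\bar x) + \calF'(\bar x)(x - \bar x) + \calN(x)$, that is, $x$ solves the linearized equation driven by the parameter $\tilde{\bs{\beta}}(x)$. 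Strong regularity supplies a Lipschitz solution map $\Phi : B_{\calZ}(0,\delta_z) \to B_{\calX}(\bar x, \delta_x)$ of the linearized equation with some Lipschitz constant $L$, and $\Phi(0) = \bar x$ since $\bar x$ solves the unperturbed generalized equation. Searching for a solution of the nonlinear perturbed equation is thus equivalent to finding a fixed point of $\Psi_{\bs{\beta}}(x) := \Phi(\tilde{\bs{\beta}}(x))$.

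Next I would verify the contraction and self-mapping properties on a ball $B_{\calX}(\bar x, \delta_x')$ with $\delta_x' \le \delta_x$. Since $\calF$ is $C^1$, for any prescribed $\varepsilon > 0$ we can shrink $\delta_x'$ to guarantee
\[
\|\calF(x) - \calF(x') - \calF'(\bar x)(x - x')\|_{\calZ} \le \varepsilon \|x - x'\|_{\calX}, \quad x, x' \in B_{\calX}(\bar x, \delta_x'),
\]
which gives $\|\tilde{\bs{\beta}}(x) - \tilde{\bs{\beta}}(x')\|_{\calZ} \le \varepsilon \|x - x'\|_{\calX}$, and hence $\|\Psi_{\bs{\beta}}(x) - \Psi_{\bs{\beta}}(x')\|_{\calX} \le L\varepsilon \|x - x'\|_{\calX}$. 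Choosing $\varepsilon = \tfrac{1}{2L}$ makes $\Psi_{\bs{\beta}}$ a $\tfrac{1}{2}$-contraction. For the self-mapping property I would estimate
\[
\|\Psi_{\bs{\beta}}(x) - \bar x\|_{\calX} = \|\Phi(\tilde{\bs{\beta}}(x)) - \Phi(0)\|_{\calX} \le L\,\|\tilde{\bs{\beta}}(x)\|_{\calZ} \le L\,\|\bs{\beta}\|_{\calZ} + \tfrac{1}{2}\|x - \bar x\|_{\calX},
\]
which stays below $\delta_x'$ as soon as $\|\bs{\beta}\|_{\calZ} \le \delta_z' := \delta_x'/(2L)$. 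A side check, possibly requiring further reduction of $\delta_x'$ and $\delta_z'$, ensures that $\tilde{\bs{\beta}}(x) \in B_{\calZ}(0,\delta_z)$ so that $\Phi$ is indeed defined along the iteration.

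The Banach fixed point theorem then delivers, for each $\bs{\beta} \in B_{\calZ}(0, \delta_z')$, a unique fixed point $x = x(\bs{\beta}) \in B_{\calX}(\bar x, \delta_x')$, which is the asserted solution. For the Lipschitz dependence I would plug in two parameters $\bs{\beta}, \bs{\beta}'$ and combine the Lipschitz property of $\Phi$ with the contraction estimate:
\[
\|x(\bs{\beta}) - x(\bs{\beta}')\|_{\calX} \le L\,\|\bs{\beta} - \bs{\beta}'\|_{\calZ} + \tfrac{1}{2}\|x(\bs{\beta}) - x(\bs{\beta}')\|_{\calX},
\]
yielding $\|x(\bs{\beta}) - x(\bs{\beta}')\|_{\calX} \le 2L\,\|\bs{\beta} - \bs{\beta}'\|_{\calZ}$. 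The only genuinely delicate point is the $C^1$ linearization estimate used for the contraction: the radius $\delta_x'$ must be selected independently of $\bs{\beta}$ so that the contraction constant, and consequently $\delta_z'$, is uniform over the small parameter ball. Once this uniformity is secured, all other steps are routine consequences of Banach's principle and the strong regularity hypothesis.
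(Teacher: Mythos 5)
The paper does not actually prove this theorem: it is quoted as a known result of Dontchev \cite{D:1995} on strongly regular generalized equations, so there is no in-paper argument to compare against. Your proposal is the standard contraction-mapping proof of that result (Robinson's original argument) and it is correct: the equivalence $\bs{\beta}\in\calF(x)+\calN(x)$ if and only if $\tilde{\bs{\beta}}(x)\in\calF(\bar x)+\calF'(\bar x)(x-\bar x)+\calN(x)$ is exact, $\Phi(0)=\bar x$ follows from uniqueness of the linearized solution, and the contraction, self-mapping and Lipschitz estimates all close with the choice $\varepsilon=1/(2L)$. The only points worth making explicit are the ones you already flag as side checks: uniqueness in $B_{\calX}(\bar x,\delta'_x)$ requires that \emph{every} solution of the perturbed nonlinear equation in that ball satisfies $\tilde{\bs{\beta}}(x)\in B_{\calZ}(0,\delta_z)$, so that it is forced to be a fixed point of $\Psi_{\bs{\beta}}$, and Banach's principle should be applied on a closed ball with the strict inequality $L\delta'_z<\delta'_x/2$ guaranteeing that the fixed point lies in the open one.
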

	
	\subsection{The perturbed optimal control problem}
	\nin To cast the first order optimality system \eqref{eq:rp_h1} as a special case of \eqref{eq_gen}, let $(\bar y, \bar u)$ be a local solution of \eqref{P}  with initial datum $y_0 \in B_V(\delta_3)$, and let $\bar p$ denote the associated adjoint state. Then optimality system for  \eqref{P} can be expressed as:
	\begin{equation}\label{eq_genF}
	0 \in \calF(\bar y, \bar u, \bar p) + \left( 0,0,0,\partial \mb{I}_{\Uad}(\bar u) \right)^{T},
	\end{equation}
	where the function $\calF:\calX \to \calZ $ with
	\begin{equation}
	\calX=\Wiay \times (U \cap C(\bar I;\calU)) \times \Wiay, \text{ and } \calZ= \Liy \times (U \cap C(\bar I;\calU)) \times \Liy \times V
	\end{equation}
	\nin is given by
	\begin{equation}
	\calF( y,  u,  p) = \bpm  y_t - A y - F(  y) - Bu \\ \alpha u - B^*p \\ - p_t - A^*  p - F'({y})^*  p + {y} \\  y(0) - y_0 \epm,
	\end{equation}
	and
	\begin{equation}
	\partial \mb{I}_{\Uad}( u) = \left\{ \tilde{u} \in U \cap C(\bar I;\calU): \ipp{\tilde{u}(t)}{v - u(t)}_{\calU} \leq 0,\, \forall\, t \in I, \ v \in B_{\eta}(0)  \right\},
	\end{equation}
	\nin with $\ds B_{\eta}(0) = \left\{ v \in \calU: \norm{v}_{\calU} \leq \eta \right\}$. In order to apply Theorem \ref{thm_gen_eq} to \eqref{eq_genF}, we need to show strong regularity of this equation at the reference solution $(\bar y, \bar u, \bar p)$ of \eqref{eq_genF}. First we note that $\calF$ is continuously differentiable  by \eqref{asp_3}. Observe also that for  $\bs{\beta} = (\beta_1, \beta_2, \beta_3, \beta_4) \in \calZ$
	the generalized equation
	\begin{equation}\label{inc_b_gen}
	\bs{\beta} \in \calF(y,u,p) + \left( 0,0,0, \partial \mb{I}_{\Uad}( u) \right).
	\end{equation}
	is the first order optimality system for
	\begin{subequations}\label{nl_ocp}
		\begin{multline}
		\inf_{\bem y \in \Wiay \\ u \in \Uad \eem} \hat{J}(y,u) = \inf_{\bem y \in \Wiay \\ u \in \Uad \eem} \ \frac{1}{2} \int_{0}^{\infty} \norm{y}_Y^2 dt \\+ \frac{\alpha}{2} \int_{0}^{\infty} \norm{u}_{\calU}^2 dt - \int_{0}^{\infty} \ipp{y}{\beta_3}_{Y} dt - \int_{0}^{\infty} \ipp{u}{\beta_2}_Y dt,
		\end{multline}
		\nin subject to
		\begin{empheq}[left=\empheqlbrace]{align}
		y_t &= A y + Bu + F(y) + \beta_1 & \text{ in } L^2(I;Y),\\
		y(0) &= y_0 + \beta_4 & \text{ in } V.
		\end{empheq}
	\end{subequations}
	
	The linearized version of \eqref{inc_b_gen}  is given by
	\begin{equation}\label{lin_b_gen}
	\bs{\beta} \in \calF(\bar y, \bar u, \bar p) + \calF'(\bar y, \bar u, \bar p)(y - \bar y,u - \bar u,p - \bar p) +  \left( 0,0,0, \partial \mb{I}_{\Uad}( u) \right),
	\end{equation}
	\nin or equivalently
	\begin{subequations}\label{eq:kk9}
		\begin{empheq}[left=\empheqlbrace]{align}
		y_t = A y + B u + F'(\bar{y})(y - \bar{y}) - F(\bar{y}) + \beta_1,\\
		\alpha u - B^*p + \partial \mb{I}_{\Uad}( u) \ni \beta_2, \label{u-pur}\\
		- p_t - A^* p - {F'(\bar{y})^*}\, p + y - [{F'(\bar{y})^*}\,{\bar{p}}]'(y - \bar{y}) = \beta_3, \label{p-pur}\\
		y(0) = {y}_0 + \beta_4.
		\end{empheq}
	\end{subequations}
	\nin This is the  optimality system of the following perturbed linear-quadratic optimization problem
	\begin{subequations}\label{lin_ocp}
		\begin{multline}
		\inf_{\bem y \in \Wiay \\ u \in \Uad \eem} \hat{J}(y,u) = \inf_{\bem y \in \Wiay \\ u \in \Uad \eem} \frac{1}{2} \int_{0}^{\infty} \norm{y}_Y^2 dt + \frac{\alpha}{2} \int_{0}^{\infty} \norm{u}_{\calU}^2 dt \\ - \frac{1}{2} \int_{0}^{\infty} \ipp{[F'(\bar{y})^*\bar{p}]'y - \bar y}{y- \bar y}_{Y} dt
		- \int_{0}^{\infty} \ipp{y}{\beta_3}_{Y} dt - \int_{0}^{\infty} \ipp{u}{\beta_2}_{\mathcal{U}}\, dt,
		\end{multline}
		\nin subject to
		\begin{empheq}[left=\empheqlbrace]{align}
		y_t &= A y + B u + F'(\bar{y})(y - \bar{y}) - F(\bar{y}) + \beta_1 & \text{ in } L^2(I;Y),\\
		y(0) &=  {y}_0 + \beta_4 & \text{ in } V.
		\end{empheq}
		
	\end{subequations}

	\begin{rmk}\label{rmk-pur}
		Concerning some basic properties of problem \eqref{lin_ocp} we can proceed as in Lemma 4.7, Remark 4.2, and \textit{Step} (ii) of Theorem 2.1 of \cite{KP:2022}. First,  note that the second order sufficient optimality condition  in the sense of  \eqref{snd_opt} for \eqref{P} and for \eqref{lin_b_gen} coincide. By Proposition \ref{le4.7} it is satisfied by each local solution to \eqref{P} if $y_0\in B_V(\delta_3)$.
		Then there exists a bounded neighborhood $\hat V$ of the origin in $\calZ$ such that for each $\bs{\beta} \in \hat V$ there exists a unique solution $ ( y_{(\bs{\beta})}, u_{(\bs{\beta})}, p_{(\bs{\beta})}) \in \Wiay \times U \times \Wiay$ to the perturbed linearized problem \eqref{lin_ocp} or equivalently of \eqref{eq:kk9}. Moreover $( y_{(\bs{\beta})}, u_{(\bs{\beta})}) \in \Wiay \times U $ depends Lipschitz continuously on $\bs{\beta}\in \hat V$.
		For the latter we can proceed as in Step (iii) of the proof in \cite[Theorem 2.1]{KP:2022}. Note, however, that at this point we have not yet guaranteed that $\bs{\beta} \to u_{(\bs{\beta})}$ is Lipschitz continuous with values in $C(\bar{I};\mathcal{U})$, which is necessary due to the norm on $\calX$. This, and the Lipschitz continuity of $\bs{\beta} \to u_{\bs({\beta})}$ will be established in the following subsection.
		In the proof of Lemma \ref{lem:beta_p} we shall also require that $\ds \|{\beta_2}\|_{C(\bar I;\mathcal{U})} \le \frac{\alpha \eta}{2}$, which is henceforth assumed to hold.
		%
		%		The existence of a unique optimal control of the problem \eqref{lin_ocp} is a consequence of Lagrange functional associated with $\hat J(y,u)$. For $\norm{[F'(\bar{y})^*\bar{p}]'}_{\Liy}$ sufficiently small, the cost functional $u \to \hat{J}(y(u),u)$ is strictly convex and hence the solution to this optimization problem and consequently to \eqref{lin_ocp} is unique. Smallness of $\norm{[F'(\bar{y})^*\bar{p}]'}_{\Liy}$ can be achieved by selecting $\delta_4 \in (0,\delta_3]$ sufficiently small. Consequently for each $y_0 \in B_V(\delta_5)$ and each associated local solution $(\bar y, \bar u)$ there exists a neighborhood $\hat V$ of the origin in $\calZ$ such that for each $\bs{\beta} \in \hat V$ there exists a unique solution $ \left( y_{(\bs{\beta})}, u_{(\bs{\beta})}, p_{(\bs{\beta})} \right) \in \Wiay \times U \times \Wiay$ to the perturbed linearized problem \eqref{lin_ocp}.
	\end{rmk}
	
	\subsection{Lipschitz stability of optimal controls, states and adjoint states}
	
	\nin Now we will show Lipschitz stability of optimal control, state and adjoint state,     in a neighborhood of a local solution $(\bar y, \bar u)$ of \eqref{P}  with initial datum $y_0 \in B_V(\delta_3)$, and associated adjoint state  $\bar p$.  This will be achieved once
	Lipschitz continuity of the solution mapping $\bs{\beta} \in \hat V \mapsto ( y_{(\bs{\beta})}, u_{(\bs{\beta})}, p_{(\bs{\beta})})$ of the perturbed linearized problem \eqref{lin_ocp} is proven. %Following Lemma \ref{lem:beta_p}, Proposition \ref{prop:est_p} and Corollary \ref{clr:lip_con} follow similar argumentation as Lemma 4.8, Proposition 4.2, and Corollary 4.9 of \cite{KP:2022}.
	For this purpose we require the following result for the adjoint states.

	\begin{lemma}\label{lem:beta_p}
		Let \eqref{asp_1}-\eqref{asp_4}   hold and let $(\bar y, \bar u)$, and $\bar p$ be a local solution and associated adjoint state to \eqref{P} corresponding to an initial datum $y_0 \in B_V(\delta_3)$. Then the mapping $\bs \beta \mapsto p_{(\bs \beta)}$ is continuous from $\hat V \subset \mathcal{Z}$ to $\Wiay$.
	\end{lemma}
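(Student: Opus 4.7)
The strategy is to transfer the analysis carried out for the adjoint state in Proposition \ref{prop-p-W} to the perturbed linearized setting. By Remark \ref{rmk-pur} the mapping $\bs\beta \mapsto (y_{(\bs\beta)},u_{(\bs\beta)})$ is Lipschitz continuous from $\hat V$ into $\Wiay \times U$. The adjoint component $p_{(\bs\beta)}$ satisfies the linear backward equation \eqref{p-pur}, whose right-hand side is driven by $y_{(\bs\beta)}$, $\bar y$, $\bar p$, and $\beta_3$; hence it is natural to propagate the Lipschitz dependence of $y_{(\bs\beta)}$ through this linear equation to obtain the desired continuity of $p_{(\bs\beta)}$.

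First I would establish that $p_{(\bs\beta)} \in \Wiay$ with an a priori bound uniform on $\hat V$. For this I mimic the three-step program in Proposition \ref{prop-p-W}. The existence of a Lagrange multiplier $p_{(\bs\beta)} \in \Wiya$ satisfying \eqref{p-pur} is provided by the argument already used in Proposition \ref{prop:adj} for the associated perturbed linear-quadratic problem \eqref{lin_ocp}, which is a regular point problem. Energy estimates as in \eqref{eq:kk12}, using the $V$-$Y$ coercivity \eqref{eq:kk2} and Assumption \eqref{asp_4}, upgrade this to $p_{(\bs\beta)} \in \Wiv$ with a bound of the form
\begin{equation*}
\|p_{(\bs\beta)}\|_{\Wiv} \leq c\,\bigl(\|y_{(\bs\beta)}\|_{L^2(I;Y)} + \|\bar y\|_{L^2(I;Y)} + \|\bar p\|_{L^2(I;V)} + \|\beta_3\|_{L^2(I;Y)}\bigr),
\end{equation*}
which is uniformly bounded on $\hat V$. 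The step to $\Wiay$ regularity is then a verbatim repetition of \textit{Step} 3 of Proposition \ref{prop-p-W}, testing the backward equation with $-A^*_\rho p_{(\bs\beta)}$ and using that $F'(\bar y)^* p_{(\bs\beta)} \in L^2(I;Y)$ by Assumption \eqref{asp_4}.

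A nontrivial subtlety is that the energy argument requires a decay property $p_{(\bs\beta)}(t_n)\to 0$ along a subsequence, which in Proposition \ref{prop-p-W} was obtained from the uniqueness of the adjoint state via constraint inactivity for large $t$. The analogous argument here uses \eqref{u-pur}, which gives
$u_{(\bs\beta)}(t) = \mathbb{P}_{\calU_{ad}}\bigl(\tfrac{1}{\alpha}(B^*p_{(\bs\beta)}(t)+\beta_2(t))\bigr)$. Since $p_{(\bs\beta)} \in \Wiv \subset C(I;Y)$ tends to zero in $Y$ at infinity (by \cite{CK:2017}) and since $\|\beta_2\|_{C(\bar I;\mathcal{U})} \leq \tfrac{\alpha\eta}{2}$ by the standing assumption in Remark \ref{rmk-pur}, the argument of \textit{Step} 2 of Proposition \ref{prop-p-W} goes through: the pointwise constraint is strictly inactive for all $t \geq T$ with $T$ depending only on $\hat V$, and uniqueness of $p_{(\bs\beta)}$ follows.

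Continuity is then obtained by linearity. For $\bs\beta,\bs\beta' \in \hat V$ set $\delta p = p_{(\bs\beta)} - p_{(\bs\beta')}$ and $\delta y = y_{(\bs\beta)} - y_{(\bs\beta')}$, $\delta\beta_3 = \beta_3 - \beta_3'$. Subtracting the two instances of \eqref{p-pur}, $\delta p$ solves a linear backward equation of the same structure with right-hand side
\begin{equation*}
-\delta y + [F'(\bar y)^*\bar p]'\delta y + \delta\beta_3 \in L^2(I;Y),
\end{equation*}
and the same trio of estimates ($\Wiya \to \Wiv \to \Wiay$) yields
\begin{equation*}
\|\delta p\|_{\Wiay} \leq C\bigl(\|\delta y\|_{\Wiay} + \|\delta\beta_3\|_{L^2(I;Y)}\bigr)
\end{equation*}
with $C$ independent of $\bs\beta,\bs\beta' \in \hat V$. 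Combined with the Lipschitz dependence of $\bs\beta \mapsto y_{(\bs\beta)}$ in $\Wiay$ from Remark \ref{rmk-pur}, this delivers the claimed continuity, and in fact Lipschitz continuity, of $\bs\beta \mapsto p_{(\bs\beta)}$ into $\Wiay$. The main obstacle in this plan is verifying that the inactivity threshold $T$ can be chosen uniformly in $\bs\beta\in\hat V$, which is where the uniform bound on $p_{(\bs\beta)}$ together with the smallness hypothesis on $\beta_2$ is used in an essential way.
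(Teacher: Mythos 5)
Your overall skeleton (uniform boundedness of $\{p_{(\bs\beta)}\}$ in $\Wiay$ by repeating the three regularity steps of Proposition \ref{prop-p-W}, plus the observation that the smallness of $\beta_2$ and the decay of $p_{(\bs\beta)}$ make the constraint inactive for large $t$) matches the paper's Steps 1--2. The gap is in your final ``continuity by linearity'' step. The difference $\delta p$ satisfies a backward equation on $(0,\infty)$ with \emph{no terminal datum}, and the chain of estimates you invoke ($\Wiya \to \Wiv \to \Wiay$) does not start from the right-hand side of that equation: the energy estimate \eqref{eq:kk12} bounds $\|\delta p\|_{L^2(I;V)}$ in terms of $\|\delta p\|_{L^2(I;Y)}$ plus the source, so the $L^2(I;Y)$ bound on $\delta p$ is an \emph{input}, not an output, of that trio. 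In the paper that input is produced by a duality argument through the primal linearized equation \eqref{eq:zeq_new}, tested against the variational inequality \eqref{adjTP_2}; because the constraint may be active on $[0,\hat T]$, this yields (see \eqref{est_p_b})
\begin{equation*}
\norm{\delta p}_{L^2(I;Y)} \leq C\left( \norm{\delta y}_{\Wiay} + \norm{(\delta\beta_1,\delta\beta_2)}_{L^2(I;Y)\times U} + \norm{\delta p}_{L^2(0,\hat T;Y)} + \norm{\delta u}_{U} \right),
\end{equation*}
with the terms $\norm{\delta p}_{L^2(0,\hat T;Y)}$ and $\norm{\delta u}_U$ on the right that your proposed bound omits. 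These terms are \emph{not} controlled by $\norm{\delta\bs\beta}_{\calZ}$ at this stage; they tend to zero only along a sequence $\bs\beta_n\to\bs\beta$ via weak compactness of $\{p_{(\bs\beta_n)}\}$ in $\Wiay$ and the compact embedding into $L^2(0,\hat T;V)$. This is precisely why the lemma claims only continuity: the Lipschitz estimate you assert is deferred to Proposition \ref{prop:est_p}, and its proof \emph{uses} the continuity established here (to obtain a constraint-inactivity time $\hat T$ valid uniformly for all $\bs\beta$ in a small ball, which is what removes the extra terms). As written, your argument is circular at this point. A minor additional slip: in Proposition \ref{prop-p-W} the existence of a sequence $t_n\to\infty$ with $\bar p(t_n)\to 0$ comes directly from $\bar p\in L^2(I;Y)$, not from uniqueness or constraint inactivity.
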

	
	\begin{proof} The proof related to that of Proposition 4.8 in \cite{KP:2023}, but it is sufficiently different so that we prefer to provide it here.\\

		\nin \uline{\textit{Step 1:}} For $\bs{\beta} \in \hat V$, with $\hat V$ as in Remark \ref{rmk-pur}, there exists a unique solution $\ds (y_{(\bs \beta)}, u_{(\bs \beta)}, p_{(\bs \beta)})$ to the perturbed linear system \eqref{lin_ocp}. The perturbed costate equation, and the constraint on the control can be expressed as
		\begin{subequations}
			\begin{align}
			- \partial_t p_{\ba} - A^* p\ba - F'(\bar{y})^*p\ba + y\ba - [F'(\bar{y})^*\bar{p}]'(y\ba - \bar{y}) &= \beta_1 \quad \text{ in } \Liy, \label{adjTP_1}\\
			\ip{\alpha u\ba - B^* p\ba - \beta_2}{w - u\ba}_U &\geq 0 \quad \text{for all } w \in \Uad. \label{adjTP_2}
			\end{align}
		\end{subequations}
		\nin Since $p\ba \in \Wiay \subset C(\bar I; Y)$ and $\beta_2\in C(\bar I; Y)$, this implies that $u\ba \in C(\bar I; \calU)$.\\
		
		\nin \uline{\textit{Step 2:} (Boundedness of $\{ p_{(\bs{\beta})}:  \bs{\beta} \in \hat{V} \}$ in $\Wiay$.) }
		Since $\hat{V}$ is assumed to be bounded, by  Remark \ref{rmk-pur} there exists a constant $M$ such that
		\begin{equation*}
		\norm{y_{(\bs{\beta})}}_{\Wiay} + \norm{u_{(\bs{\beta})}} \leq M \quad \text{for all } \bs{\beta} \in \hat{V}.
		\end{equation*}

		\nin To argue the boundedness of $p_{(\bs{\beta})}$, replacing $\bar y$ by $y\ba -[F'(\bar y)^*\bar p]'(y\ba -\bar y)$ we can first proceed as in \textit{Step 3} of proof of Proposition \ref{prop:adj} to obtain the boundedness of $\{\|p_{(\bs{\beta})}\|_{\Wiya}: \bf{\beta}\in \hat V\}$. Subsequently we proceed as in  \textit{Steps 1} and \textit{3} of Proposition \ref{prop-p-W} to obtain the boundedness of  $\{\|p_{(\bs{\beta})}\|_{\Wiay}: \bf{\beta}\in \hat V\}$.\\
		
		\nin \uline{Step 3: (Continuity of $p_{(\bs \beta)}$ in $\Wiay$).} Let $\{ \bs \beta_n \}$ be a convergent sequence in $\hat V$ with limit $\bs \beta$. Since $\ds \left\{ \norm{p_{({\bs \beta}_n)}}_{\Wiay}: n \in \BN \right\}$ is bounded, there exists a subsequence $\{ \bs \beta_{n_k} \}$ such that $\ds p_{(\bs \beta_{n_k})} \rightharpoonup \tilde{p}$ weakly in $\Wiay$ and strongly $L^2(0,T;V)$ for every $T\in (0,\infty)$, see e.g. \cite[Satz 8.1.12, p 213]{EE:2004}. Passing to the limit in the variational form of
		\begin{equation*}
		- \partial_t p_{(\bs \beta_{n_k})}  - A^* p_{(\bs \beta_{n_k})} - F'(\bar{y})^*p_{(\bs \beta_{n_k})} + y - [F'(\bar{y})^*\bar{p}]' \left(y_{(\bs \beta_{n_k})} - \bar{y} \right) = (\bs \beta_{n_k})_1,
		\end{equation*}
		\nin we obtain
		\begin{equation}
		- \partial_t \tilde{p} - A^* \tilde{p} - F'(\bar{y})^*\tilde{p} + y_{(\bs \beta)} - [F'(\bar{y})^*\bar{p}]'\left( y_{(\bs \beta)} - \bar{y} \right) = (\bs \beta)_1.
		\end{equation}
		\nin Since the solution to this equation is unique we have $\ds p_{(\bs \beta_n)} \rightharpoonup p_{(\bs \beta)}$ weakly in $\Wiay$. To obtain strong convergence we set $\delta \bs \beta = \bs \beta_n - \bs \beta, \ \delta p = p_{(\bs \beta_n)} - p_{(\bs \beta)}, \ \delta y = y_{(\bs \beta_n)} - y_{(\bs \beta)}$. From \eqref{adjTP_1} we derive that
		\begin{equation}\label{eq:del_p}
		- \partial_t (\delta p) - A^* (\delta p) - F'(\bar{y})^*(\delta p) + (I - [F'(\bar{y})^*\bar{p})]'(\delta y) = (\delta \bs \beta)_1,
		\end{equation}
		holds in $L^2(I;Y)$.  We shall employ a duality argument to obtain a bound on $\delta p$. Moreover we shall argue that the constraint$ \|u\ba(t)\|_{\calU}\le \eta$ is inactive for all $t$ sufficiently large. Indeed, since  $\ds p_{(\bs \beta)} \in W_\infty(\mathcal{D}(A),Y)$ we have  $\ds \lim_{t \rightarrow \infty} p_{(\bs \beta)}(t) = 0$ in $V$. Hence there exists $\hat T$ such that $\ds \frac{1}{\alpha} \norm{B^* p_{(\bs \beta)} (t) }_{\calU} \leq \frac{\eta}{4}$ for all $t \geq \hat T$, and by the choice of $\hat V$, where we assumed that $\|\beta_2\|_{C(\bar I;\mathcal{U})} \le \frac{\alpha \eta}{2}$, see Remark \ref{rmk-pur}, we also have that
		\begin{equation}\label{eq:kk10}
		\norm{u_{(\bs{\beta})}(t)}_{\calU} = \norm{ \mathbb{P}_{\mathcal{U}_{ad}} \left[ \frac{1}{\alpha} \left(B^* p_{(\bs{\beta})}(t) + \beta_2(t)\right) \right]}_{\calU} = \frac{1}{\alpha} \norm{B^* p_{(\bs{\beta})}(t)	+ \beta_2(t)}_{\calU} \leq \frac{3 \eta}{4},
		\end{equation}
		\nin i.e. the constraint is inactive for $t \geq \hat T$.
		
		Henceforth let $r,z,w=-Kz$ and $\tilde \delta$ be as introduced in \eqref{eq:zeq_new} and recall that $w\in U_{ad}$.  Then we obtain
		\begin{align*}
		\langle \delta p,  B( Kz - u_{( \bs \beta_{n_k})})&\rangle_{L^2(I;Y)} \leq \int_{0}^{\hat T} \ip{p_{(\bs \beta_{n_k})}(t) - p_{(\bs \beta)}(t)}{B( Kz(t) - u_{( \bs \beta_{n_k} )}(t) )}_Y dt \\
		& \hspace{1.3cm} + \int_{\hat T}^{\infty} \ip{B^*(p_{(\bs \beta_{n_k})}(t) - p_{(\bs \beta)}(t))}{Kz(t)- u_{( \bs \beta_{n_k} )}(t) }_{\mathcal{U}}  \,  dt,\\
		&\leq \int_{0}^{\hat T} \norm{B^*(p_{(\bs \beta_{n_k})}(t) - p_{(\bs \beta)}(t))}_Y\norm{Kz(t) - u_{( \bs \beta_{n_k} )}(t) }_{\mathcal{U}}dt\\
		& + \int_{\hat T}^{\infty} \ip{(\alpha u_{(\bs \beta_{n_k})}(t) - \beta_{n_k,2}(t)) - (\alpha u_{(\bs \beta)}(t) - \beta_2(t))}{ Kz(t)  - u_{( \bs \beta_{n_k}) }(t)}_{\mathcal{U}}\,dt,
		\end{align*}
		where we used that $w\in U_{ad}$ and feasibility  of $u_{(\bs \beta)}(t)$ for $t\ge \hat T$. Consequently we have
		\begin{equation}\label{eq:kk11}
		\begin{array}l
		\langle \delta p,  B( Kz - u_{( \bs \beta_{n_k})})\rangle_{L^2(I;Y)} \leq
		\left( \norm{B}_{\calL(\calU,Y)} \norm{p_{(\bs \beta_{n_k})} - p_{(\bs \beta)}}_{L^2(0,\hat T; Y)} + \alpha \norm{u_{(\bs \beta_{n_k})} - u_{(\bs \beta)}}_U \right.\\
		\hspace{1.3cm} \left. + \norm{\beta_{n_k,2} - \beta_2}_U \right) \left( \norm{K}_{\calL(Y,\calU)} \norm{z}_{W_{\infty}} + \norm{u_{( \bs \beta_{n_k} )}}_U \right).
		\end{array}
		\end{equation}
		\nin Let $R_{\tilde \delta} = \left\{ r \in L^2(I;Y): \norm{r}_{L^2(I;Y)} \leq \delta \right\}$. For arbitrary $r \in R_{\tilde \delta}$ let $z = z(r)$
		denote the solution to \eqref{eq:zeq_new} with $\bs \beta \in \hat V$. We find
		\begin{equation*}
		\ip{\delta p}{r}_{L^2(I;Y)} = \ip{(I - [F'(\bar{y})^*\bar{p}]')(\delta y) - \delta \beta_1}{z}_{L^2(I;Y)} + \ip{\delta p}{B(Kz_{(\bs \beta)} - u_{(\bs \beta)})}_{L^2(0,\hat T; Y)},
		\end{equation*}
		\nin and thus, using \eqref{eq:kk11} and \eqref{eq:z_est_new}, we obtain for some $C_2 > 0$,
		\begin{align}
		\norm{\delta p}_{L^2(I;Y)} &= \sup_{r \in R_{\tilde \delta}} \ \ip{\delta p}{r}_{L^2(I;Y)} \nonumber \\
		&\leq C_2 \left( \norm{\delta y}_{_{\Wiay}} + \|\ipp{\delta \beta_1}{\delta \beta_2}\|_{L^2(I;Y) \times U} + \norm{\delta p}_{L^2(0,\hat T;Y)} + \norm{\delta u}_U \right). \label{est_p_b}
		\end{align}
		\nin Since $\norm{\delta y}_{_{\Wiay}} \to 0,\ \norm{\delta p}_{L^2(0,\hat T;Y)} \to 0,\ \|{\ipp{\delta \beta_1}{\delta \beta_2}\|_{L^2(I;Y) \times U}} \to 0$ for $n \to 0$ this implies that $\norm{\delta p}_{L^2(I;Y)} \to 0$.  To obtain convergence to 0 of $\delta p$ in $W_\infty(V,V')$ we can proceed similarly as in Step 1 of the proof of Proposition \ref{prop-p-W}. For a moment we now emphasize the dependence of $\delta p$ on $n$ and write  $\delta p_n$ instead.
		Since $\delta p_n \in L^2(I;Y)$ of each $n$  there exists a  monotonically increasing sequence $\ds \{ t(n)_k \}_{n \in \BN}$ with $\lim_{k\to\infty} t(n)_k=\infty$ and  $\lim_{k\to\infty}\delta p_n(t(n)_k) = 0 $ in $Y$. Now, replacing $-\bar y$ by$ -(I -[F'(\bar y)^*\bar p]'\delta y_n + (\delta \beta_n)_1$ we obtain analogously to \eqref{eq:kk12}
		\begin{equation*}
		{\theta} \int_{0}^{\infty} \norm{\delta p_n}^2_V\leq \left(\frac{c_1^2}{\theta} + 1 + 2\rho \right) \norm{\delta p_n}^2_{L^2(I;Y)} + 2 \norm{ -(I -[F'(\bar y)^*\bar p]'\delta y_n + (\delta \beta_n)_1}^2_{L^2(I;Y)} .
		\end{equation*}
		The right hand side tends to $0$ for $n\to \infty$, and hence $\lim_{n\to\infty} \|\delta p_n\|_{L^2(I;V)}=0$. Utilizing this fact in \eqref{eq:del_p} the convergence  $\lim_{n\to\infty} \|\delta p_n\|_{W_\infty(V,V')}=0$ follows. Since $\delta p_n \in L^2(I;V)$ we also have
		$\lim_{k\to\infty}\delta p_n(t(n)_k) = 0 $ in $V$, for some
		monotonically increasing sequence $\ds \{ t(n)_k \}_{n \in \BN}$ with $\lim_{k\to\infty} t(n)_k=\infty$, for each $n$.\\

		\nin By taking the inner product in \eqref{eq:del_p} with $-A^*_{\rho} = -A^* + \rho I$ and continuing as in the \textit{Step 3} of Proposition \ref{prop-p-W}, we obtain,
		\begin{equation*}
		%\begin{array}l
		\int^{\infty}_0\! \| A_\rho^*(\delta p)(t)\|^2_Y\,dt  \le  C_3 \!\int_0^{\infty} \left( \|F'(\bar y)^*(\delta p)(t)\|^2_Y + \norm{[F'(\bar{y})^*\bar{p}]'(\delta y)(t)}^2_Y + \rho \,\| (\delta p)(t)\|^2_Y + \|(\delta \bs{\beta})_1(t)\|^2_Y \right)\,dt,
		\end{equation*}
		\nin for a constant $C_3$ independent of $n$.  Since $\norm{\delta y}_{\Wiay} \to 0,\ \norm{\delta p}_{L^2(I;V)} \to 0,\ \norm{(\delta \bs \beta)_1}_{\Liy} \to 0$ for $n \to 0$ this implies that $\norm{\delta p}_{\LiA} \to 0$. Utilizing this fact in  \eqref{eq:del_p} we find that $\lim_{n\to \infty}\ds \norm{\delta p}_{\Wiay}    = 0$.
		
	\end{proof}

	\begin{prop}\label{prop:est_p}
		Let \eqref{asp_1}-\eqref{asp_4}  hold and let $(\bar{y}, \bar{u})$, and $\bar p$ denote a local solution and associated adjoint state to \eqref{P} corresponding to an initial condition $y_0 \in B_V(\delta_3)$. Then there exists $\varepsilon > 0$ and $C > 0$ such that  for all  $\bs{\hat{\beta}}$ and  $\bs{\beta} \in \hat{V} \cap B_{\calZ}(\varepsilon)$
		\begin{multline}\label{est:p_dif}
		\norm{\hat{y}_{(\hat{\bs \beta})} - y_{(\bs \beta)}}_{\Wiay} + \norm{\hat{p}_{(\hat{\bs \beta})} - p_{(\bs \beta)}}_{\Wiay} + \norm{u_{(\hat{\bs \beta})} - u_{(\bs \beta)}}_{C(\bar I;\calU)} \leq C  \norm{\hat{\bs \beta} - \bs \beta}_{\calZ}
		\end{multline}
		holds.
	\end{prop}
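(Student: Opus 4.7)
The plan is to upgrade the continuity assertions of Remark \ref{rmk-pur} and Lemma \ref{lem:beta_p} to quantitative Lipschitz estimates in three steps matching the three summands in \eqref{est:p_dif}. Throughout, write $\delta y$, $\delta u$, $\delta p$, $\delta \bs{\beta}$ for the differences between the two solution triples at $\hat{\bs{\beta}}$ and $\bs{\beta}$. \emph{Step 1} is essentially already contained in Remark \ref{rmk-pur}, which asserts Lipschitz dependence of $(y_{(\bs{\beta})}, u_{(\bs{\beta})})$ on $\bs{\beta}$ with values in $\Wiay \times U$, based on the uniform second-order coercivity \eqref{sosc} from Proposition \ref{le4.7}. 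Hence $\|\delta y\|_{\Wiay} + \|\delta u\|_U \le C\|\delta\bs{\beta}\|_{\calZ}$ on a sufficiently small ball.

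For \emph{Step 2}, subtracting the adjoint equation \eqref{adjTP_1} at $\hat{\bs{\beta}}$ and $\bs{\beta}$ yields
\begin{equation*}
-(\delta p)_t - A^*(\delta p) - F'(\bar y)^*(\delta p) = -(I - [F'(\bar y)^*\bar p]')(\delta y) + (\delta\bs{\beta})_3, \qquad \lim_{t\to\infty}\delta p(t)=0.
\end{equation*}
By Assumption \eqref{asp_2}, the uniform bound on $\bar p$ in $\Wiay$ from Proposition \ref{prop-p-W}, and Step 1, the right-hand side is bounded in $L^2(I;Y)$ by $C\|\delta\bs{\beta}\|_{\calZ}$. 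Since this equation has exactly the structure of the adjoint equation for $\bar p$ treated in Propositions \ref{prop:adj}--\ref{prop-p-W}, I would replicate their chain of energy estimates on $\delta p$: a duality/$\Wiya$ bound as in Proposition \ref{prop:adj}, then the $W_\infty(V,V')$ energy estimate of Step 1 of Proposition \ref{prop-p-W}, and finally the $\Wiay$ estimate by testing against $-A_{\rho}^*\delta p$ as in Step 3 of that proposition. Each step is linear and uniform in $\bs{\beta}$, yielding $\|\delta p\|_{\Wiay} \le C\|\delta\bs{\beta}\|_{\calZ}$. \emph{Step 3} then follows immediately from the pointwise projection formula appearing in \eqref{eq:kk10}: using $1$-Lipschitz continuity of $\mathbb{P}_{\calU_{ad}}$, the boundedness of $B^*\in\calL(Y,\calU)$, and the embedding $\Wiay \hookrightarrow C(\bar I;V) \hookrightarrow C(\bar I;Y)$, one gets $\|\delta u\|_{C(\bar I;\calU)} \le \tfrac{1}{\alpha}\!\left(\|B^*\|_{\calL(Y,\calU)} \|\delta p\|_{C(\bar I;Y)} + \|\delta\beta_2\|_{C(\bar I;\calU)}\right) \le C\|\delta\bs{\beta}\|_{\calZ}$.

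The main obstacle is the duality step in Step 2, where Lemma \ref{lem:beta_p} produced \eqref{est_p_b} with a residual $\|\delta p\|_{L^2(0,\hat T; Y)}$ that was disposed of there by a non-quantitative compactness/weak-convergence argument. To close the Lipschitz inequality, I would control this residual by a finite-horizon backward parabolic estimate on $[0,\hat T]$, exploiting the fact that the cutoff $\hat T$ from \eqref{eq:kk10} can be chosen uniformly for $\bs{\beta} \in \hat V$; on $[\hat T,\infty)$ the pointwise constraint is inactive for both $u_{(\bs{\beta})}$ and $u_{(\hat{\bs{\beta}})}$, so $\delta u = \tfrac{1}{\alpha}(B^*\delta p + \delta\beta_2)$ and the coupled $(\delta y,\delta p)$-system is genuinely linear. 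Standard energy bounds then give the needed control of $\|\delta p\|_{W(\hat T,\infty;\calD(A),Y)}$ and in particular of $\|\delta p(\hat T)\|_V$, which feeds back into the $[0,\hat T]$-estimate, absorbs the residual, and closes the Lipschitz inequality.
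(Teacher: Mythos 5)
Your proposal follows essentially the same route as the paper's proof: Lipschitz continuity of $(y_{(\bs\beta)},u_{(\bs\beta)})$ in $\Wiay\times U$ is taken from Remark \ref{rmk-pur}, the adjoint difference $\delta p$ is estimated via the duality argument combined with the uniform inactivity of the control constraint beyond a time $T$ (valid on $\hat V\cap B_{\calZ}(\varepsilon)$ thanks to the continuity from Lemma \ref{lem:beta_p}), the $\Wiay$ bound is then obtained by testing with $-A_\rho^*\delta p$ as in Step 3 of Proposition \ref{prop-p-W}, and the $C(\bar I;\calU)$ bound on $\delta u$ follows from the projection formula. Your handling of the residual $\norm{\delta p}_{L^2(0,\hat T;Y)}$ --- estimating on $[\hat T,\infty)$ where the optimality system is linear and then propagating backward over the finite horizon $[0,\hat T]$ --- is precisely the mechanism the paper delegates to \cite[Proposition 2]{KP:2022}; the only caveat is that on $[\hat T,\infty)$ the $L^2$-bound on $\delta p$ still requires the duality construction with the stabilizing feedback $w=-Kz$ (as in Proposition \ref{prop:adj}) rather than ``standard energy bounds,'' since $A$ need not be stable, but you already list that duality step in your chain of estimates.
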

	\begin{proof}
		The  Lipschitz continuity of $( y_{(\bs{\beta})}, u_{(\bs{\beta})}) \in \Wiay \times U $ for $\bs{\beta}$  in a neighborhood of the origin was already addressed in Remark \ref{rmk-pur}. We need to assert the extra Lipschitz continuity of $u_{\bs{\beta}} \in C(\bar{I};\mathcal{U})$ and the Lipschitz continuity of $p_{\bs{\beta}}$.

		Let us henceforth set $(y,u,p) = \left( y_{(\bs{\beta})}, u_{(\bs{\beta})}, p_{(\bs{\beta})} \right)$, and $\left( \hat{y}, \hat{u}, \hat{p} \right) = \left( \hat{y}_{(\bs{\hat{\beta}})}, \hat{u}_{(\bs{\hat{\beta}})}, \hat{p}_{(\bs{\hat{\beta}})} \right)$. We also set $\delta \bs{\beta} = \bs{\beta} - \hat {\bs{\beta}}, \ \delta p = p_{(\bs \beta)} - \hat p_{(\hat {\bs \beta})}, \ \delta y = y_{(\bs \beta)} - \hat y_{(\hat{\bs \beta})}$.
		\nin Then we obtain the equation
		\begin{equation}\label{eq:dp}
		- \partial_t (\delta p) - A^* (\delta p) - F'(\bar{y})^*(\delta p) + (I - [F'(\bar{y})^*\bar{p})]'(\delta y) = (\delta \beta_1) \in \Liy \subset \LiVp.
		\end{equation}
		\nin Since $\ds \norm{[F'(\bar{y})^*\bar{p})]'(\delta y)}_{\Liy} \lesssim \norm{\delta y}_{\Wiay}$   equation  \eqref{eq:dp} is well-defined on $\LiVp$.  Hence we can at first  apply the same technique as in \cite[Proposition 2, pg. 32]{KP:2022} to obtain the Lipschitz continuous dependence of $\delta p$ with respect to $\bs \beta$. Indeed since $\bar p \in W_\infty(V,V')$ there exists $T$ such that $\frac{1}{\alpha}\|B^*\bar p(t)\|_Y \le \frac{\eta}{2}$, for all $t\ge T$, and thus the control constraint is inactive on $[T,\infty$. Utilizing the continuity established in Lemma \ref{lem:beta_p} there exists $\epsilon > 0$ such that $\ds \frac{1}{\alpha}\|B^* p_{(\bs \beta)}(t) + \beta_2(t)\|_Y \le \frac{3\eta}{4}$, for all $t\ge T$ and all  $\bs{\beta} \in \hat{V} \cap B_{\calZ}(\varepsilon)$, and thus by \eqref{adjTP_2} the control $u_{\bs \beta}$ is inactive for these values of $\bs \beta$ and $t$.  We can now proceed as in the mentioned reference to assert that there exists a constant $C_1$ independent of $\bs{\beta} \in \hat{V} \cap B_{\calZ}(\varepsilon)$ such that
		\begin{equation}
		\norm{\delta p}_{\Wiv} \leq C_1 \left( \norm{\delta y}_{\Wiay} + \norm{\delta u}_{U} + \norm{\delta \bs{\beta}}_{\calZ} \right).
		\end{equation}
		Next we need to obtain the Lipschitz continuity of $p$ in $\Wiay$.
		For this purpose  we take the inner product in \eqref{eq:dp} with $-A^*_{\rho}p = (-A^* + \rho I)p$ and continue as in the \textit{Step 3} of Proposition \ref{prop-p-W}. We find
		\begin{equation}
		\norm{\delta p}_{\LiA} \leq C_2 \left( \norm{\delta p}_{\Liy} + \norm{\delta y}_{\Wiay} + \norm{\delta \beta_1}_{\Liy} \right).
		\end{equation}
		\nin Combining the two inequalities from above, we obtain
		\begin{equation}
		\norm{\delta p}_{\LiA} \leq C_3 \left( \norm{\delta y}_{\Wiay} + \norm{\delta u}_{U} + \norm{\delta \bs{\beta}}_{\calZ} \right).
		\end{equation}
		Then by \eqref{eq:dp}, we obtain the Lipschitz continuity of $p_t$ in $L^2(I;Y)$ and of $\delta y$. Combining these results we deduce that
		\begin{equation}\label{p-lip-s}
		\norm{\delta p}_{\Wiay} \leq C_3 \left( \norm{\delta y}_{\Wiay} + \norm{\delta u}_{U} + \norm{\delta \bs{\beta}}_{\calZ} \right).
		\end{equation}
		\nin We also have
		\begin{equation*}
		u_{(\bs \beta)} = \mathbb{P}_{\mathcal{U}_{ad}} \left[ -\frac{1}{\alpha}\left(B^* p_{(\bs \beta)} + \beta_2 \right)\right] \in U \cap C(\bar I;\calU),
		\end{equation*}
		\nin and thus
		\begin{align*}
		\norm{\delta u(t)}_{\calU} &\leq \norm{\mathbb{P}_{\mathcal{U}_{ad}} \left[ -\frac{1}{\alpha} \left(B^*\hat{p}_{(\hat{\bs{\beta}})}(t) + \hat{\beta}_2(t) \right)\right] - \mathbb{P}_{\mathcal{U}_{ad}} \left[ -\frac{1}{\alpha}\left(B^* p(t)_{(\bs \beta)} + \beta_2(t) \right)\right]}_{\calU} \\
		&\leq \frac{1}{\alpha} \left( \norm{B^*} \norm{\delta p(t)}_{Y} + \norm{\delta\beta(t)}_{\calU} \right).
		\end{align*}
		This yields
		\begin{equation}\label{u-lip}
		\norm{\delta u}_{C(\bar I;\calU)} \leq C_4 \left( \norm{\delta p}_{\Wiay} + \norm{\delta \beta_2}_{C(\bar I;\calU)} \right),
		\end{equation}
		\nin Combining \eqref{p-lip-s} and \eqref{u-lip}, there exists a constant $L$ such that
		\begin{equation}
		\norm{\hat{y}_{(\hat{\bs \beta})} - y_{(\bs \beta)}}_{\Wiay} + \norm{\hat{p}_{(\hat{\bs \beta})} - p_{(\bs \beta)}}_{\Wiay} + \norm{\hat{u}_{(\hat{\bs \beta})} - u_{(\bs \beta)}}_{U \cap C(\bar I;Y)} \leq L \norm{\hat{\bs \beta} - \bs \beta}_{\calZ}
		\end{equation}
		for all $\ds \bs{\hat{\beta}}$ and $\ds \bs{\beta} \in \hat{V} \cap B_{\calZ}(\varepsilon)$.
	\end{proof}	
	
	\nin We have  concluded the verification of the strong regularity condition for problem \eqref{eq_genF} and can conclude the following result from Theorem  \ref{thm_gen_eq}.
	
	\begin{clr}\label{clr:lip_con}
		Let the assumptions \eqref{asp_1}-\eqref{asp_4} hold and let $(\bar{y}, \bar{u})$ be a local solution of \eqref{P} corresponding to an initial datum $y_0 \in B_V(\delta_3)$. Then there exist $\delta_4 > 0$, a neighborhood $\hat U = \hat U(\bar{y}, \bar{u}, \bar p) \subset \Wiay \times (U \cap C(\bar{I};\calU)) \times \Wiay$, and a constant $\mu > 0$ such that for each $\tilde{y}_0 \in B_V(y_0; \delta_4)$ there exists a unique  $(y(\tilde{y}_0), u(\tilde{y}_0), p(\tilde{y}_0)) \in \hat U(\bar{y}, \bar{u}, \bar p)$ satisfying the first order condition, and
		\begin{multline}\label{eq:clr_lc_2}
		\norm{ \left( y(\hat{y}_0), u(\hat{y}_0), p(\hat{y}_0) \right) \!-\! \left(y(\check{y}_0), u(\check{y}_0), p(\check{y}_0) \right)}_{\Wiay \times (U \cap C(\bar{I};\calU)) \times \Wiay} \\\leq   \mu \norm{\hat{y}_0 - \check{y}_0}_{V},
		\end{multline}
		for all $\hat{y}_0, \check{y}_0 \in B_V(y_0, \delta_4)$. Moreover $(y(\tilde{y}_0), u(\tilde{y}_0))$ is a local solution of \eqref{P}.
	\end{clr}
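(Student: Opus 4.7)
The plan is to apply the abstract strong regularity theorem, Theorem \ref{thm_gen_eq}, to the generalized equation \eqref{eq_genF} at the reference point $(\bar y, \bar u, \bar p)$, with the perturbation parameter being the initial condition $\tilde y_0 \in B_V(y_0, \delta_4)$ entering only through the fourth component $\beta_4 = \tilde y_0 - y_0$ of $\bs\beta \in \calZ$. Proposition \ref{prop:est_p} has already verified the strong regularity hypothesis: for all $\bs\beta, \hat{\bs\beta}$ in $\hat V \cap B_\calZ(\varepsilon)$, the linearized perturbed equation \eqref{lin_b_gen} admits a unique solution $(y_{(\bs\beta)}, u_{(\bs\beta)}, p_{(\bs\beta)})$ in a fixed $\calX$-neighborhood of $(\bar y, \bar u, \bar p)$, and this solution depends Lipschitz continuously on $\bs\beta$ with respect to the norm of $\calX = \Wiay \times (U \cap C(\bar I; \calU)) \times \Wiay$.

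By Theorem \ref{thm_gen_eq} applied to \eqref{eq_genF}, there exist open balls $B_\calX((\bar y, \bar u, \bar p); \delta_x')$ and $B_\calZ(0; \delta_z')$ such that for every $\bs\beta \in B_\calZ(0; \delta_z')$ the perturbed nonlinear inclusion \eqref{inc_b_gen} admits a unique solution $(y(\bs\beta), u(\bs\beta), p(\bs\beta))$ in $B_\calX((\bar y, \bar u, \bar p); \delta_x')$, and the solution map is Lipschitz continuous from $B_\calZ(0; \delta_z')$ to $B_\calX((\bar y, \bar u, \bar p); \delta_x')$. I would now specialize this to initial-condition perturbations alone by setting $\bs\beta = (0,0,0,\tilde y_0 - y_0)$, choosing $\delta_4 > 0$ so that $\|\tilde y_0 - y_0\|_V \leq \delta_4$ implies $\|\bs\beta\|_\calZ \leq \delta_z'$. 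Setting $\hat U := B_\calX((\bar y, \bar u, \bar p); \delta_x')$ and using linearity of $\tilde y_0 \mapsto \bs\beta$, the Lipschitz estimate \eqref{eq:clr_lc_2} follows immediately with a constant $\mu$ equal to the Lipschitz constant from Theorem \ref{thm_gen_eq} times $\|i\|_{V \to \calZ}$.

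It remains to argue that $(y(\tilde y_0), u(\tilde y_0))$ is actually a \emph{local minimum} of \eqref{P} with initial datum $\tilde y_0$, not merely a critical point of the first order optimality system. For this I would invoke Proposition \ref{le4.7}: since the uniform second order sufficient condition \eqref{sosc} holds at every reference solution with initial data in $B_V(\delta_3)$ with the same coercivity constant $\bar\gamma > 0$, and since $(y(\tilde y_0), u(\tilde y_0), p(\tilde y_0))$ depends continuously on $\tilde y_0$ in $\calX$ while $\calL''$ depends continuously on its arguments (by \eqref{asp_2} and the estimates used in the proof of Proposition \ref{le4.7}), after possibly shrinking $\delta_4$ one maintains the positive definiteness of $\calL''$ on $\ker \calE$ at the perturbed triple. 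Standard arguments (cf.\ the second order sufficient condition for constrained optimization) then yield that $(y(\tilde y_0), u(\tilde y_0))$ is a strict local minimum of problem \eqref{P} with initial datum $\tilde y_0$.

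The main obstacle is mostly bookkeeping: one must verify that the strong regularity obtained in Proposition \ref{prop:est_p} for $\bs\beta$ ranging in the \emph{bounded} neighborhood $\hat V$ of Remark \ref{rmk-pur} actually survives when one restricts attention to the affine subspace $\{(0,0,0,\beta_4)\}$, which is automatic since the proposition's estimate \eqref{est:p_dif} is norm-wise and treats all four components symmetrically. A more delicate point is ensuring that the image of $B_V(y_0, \delta_4)$ under $\tilde y_0 \mapsto \bs\beta$ lies inside both $\hat V$ and $B_\calZ(\varepsilon)$ from Proposition \ref{prop:est_p}, which is achieved by the choice $\delta_4 \leq \min\{\delta_z', \varepsilon\}$. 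All other steps are routine transcriptions of the abstract theorem.
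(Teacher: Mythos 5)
Your proposal is correct and follows essentially the same route as the paper: the authors likewise obtain the corollary by noting that Lemma \ref{lem:beta_p} and Proposition \ref{prop:est_p} complete the verification of strong regularity for \eqref{eq_genF} and then invoking Theorem \ref{thm_gen_eq} with the perturbation restricted to $\bs\beta=(0,0,0,\tilde y_0-y_0)$. Your additional paragraph confirming local minimality via the uniform second order condition of Proposition \ref{le4.7} spells out an argument the paper leaves implicit (cf.\ Remark \ref{rmk-pur}), but it is the intended one.
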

	
	\nin Above $B_V(y_0, \delta_4)$ denotes the ball of radius $\delta_4$ and center $y_0$ in $V$.
	
	\section{Differentiability of the cost-functional and  HJB equation}\label{sec-HJB}

	As a consequence of the Corollary \ref{clr:lip_con} and Proposition \ref{le4.7} concerning the second order sufficient optimality condition,  for each  $y_0 \in B_V(\delta_3)$ there exists a neighborhood within which
	the local minimal value function $\mathcal{V}$ is well-defined and the corresponding controls, states and adjoint states depend Lipschitz continuously on the initial data. Consequently the local  value function itself is locally Lipschitz continuous. Exploiting the structure of the cost functional in \eqref{P} Fr\'{e}chet differentiability of the minimal value function can be obtained. We continue to use the notation for $B_V(y_0, \delta_4)$ of Corollary \ref{clr:lip_con} above and locally optimal solutions are understood in the sense of $\hat U$.
	
	%	\todo[inline]{BP: We have a small problem due to change of $\bar y_0$ to $y_0$}
	
	\begin{thm}\label{thm-CD-r}
		(Sensitivity of Cost) Let the assumptions \eqref{asp_1}-\eqref{asp_4} hold and let $(\bar{y}, \bar{u})$ be a local solution of \eqref{P} corresponding to an initial datum $y_0 \in B_V(\delta_3)$.
		Then  for each $\hat{y}_0 \in B_V(y_0, \delta_4)$ the local minimal value function associated to \eqref{P} is Fr\'{e}chet differentiable  from $B_V(y_0,\delta_4)$ to $\mathbb{R}$   with derivative given by
		\begin{equation} \label{eq:5.1}
		\V'(\hat y_0) = - p(0; \hat y_0),
		\end{equation}
		and the Riesz representor of $\V'$ lies in $C(U(y_0), V)$.
	\end{thm}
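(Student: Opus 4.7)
Given Corollary \ref{clr:lip_con}, the plan is to Taylor-expand the cost difference and use the adjoint equation to read off $-p(0;\hat y_0)$ as the derivative. Fix $\hat y_0\in B_V(y_0,\delta_4)$ and $h\in V$ small; write $(y,u,p)$ and $(y_h,u_h,p_h)$ for the locally optimal triples at $\hat y_0$ and $\hat y_0+h$ supplied by Corollary \ref{clr:lip_con}, together with the Lipschitz estimate $\|y_h-y\|_{\Wiay}+\|u_h-u\|_U+\|p_h-p\|_{\Wiay}\le \mu\|h\|_V$. Applying the algebraic identity $\|a\|^2-\|b\|^2=2(b,a-b)+\|a-b\|^2$ to each integrand in $J(y_h,u_h)-J(y,u)$ yields
$$\V(\hat y_0+h)-\V(\hat y_0)=\ipp{y}{y_h-y}_{\Liy}+\alpha\ipp{u}{u_h-u}_U+O(\|h\|_V^2).$$

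To process the state term, note that $\delta y:=y_h-y$ solves
$$\delta y_t-A\delta y-F'(y)\delta y-B(u_h-u)=R,\qquad \delta y(0)=h,$$
where $R:=F(y_h)-F(y)-F'(y)(y_h-y)$ satisfies $\|R\|_{\Liy}=O(\|h\|_V^2)$ by Assumption \eqref{asp_2} and the Lipschitz bound above. I would then test the adjoint equation \eqref{eq:kk5} --- which by Proposition \ref{prop-p-W} holds in $\Liy$ --- against $\delta y$ and integrate by parts over $(0,\infty)$. This is legitimate since $\delta y$ and $p$ both lie in $\Wiay\hookrightarrow C([0,\infty);V)$ and vanish at infinity in $V$ (for $p$ by \eqref{eq:kk6}; for $\delta y$ by the standard property of $\Wiay$ recalled in the excerpt). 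The terminal boundary term at $t=\infty$ drops, the one at $t=0$ produces $-\ipp{h}{p(0)}_Y$, and the residual $\ipp{p}{R}_{\Liy}$ is of order $O(\|h\|_V^2)$, giving
$$\V(\hat y_0+h)-\V(\hat y_0)=-\ipp{h}{p(0;\hat y_0)}_Y+\ipp{\alpha u-B^*p}{u_h-u}_U+O(\|h\|_V^2).$$

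The remaining control term is squeezed to $O(\|h\|_V^2)$ via the variational inequality \eqref{adjP-3} applied at \emph{both} $(u,p)$ and $(u_h,p_h)$: choosing the test element $v=u_h$ in the inequality at $(u,p)$ gives $\ipp{\alpha u-B^*p}{u_h-u}_U\ge 0$, and choosing $v=u$ in the inequality at $(u_h,p_h)$ gives $\ipp{\alpha u_h-B^*p_h}{u_h-u}_U\le 0$. Subtracting and invoking the Lipschitz bound on $p_h-p$ from Corollary \ref{clr:lip_con} yields the sandwich
$$0\le \ipp{\alpha u-B^*p}{u_h-u}_U\le -\alpha\|u_h-u\|_U^2+\ipp{B^*(p_h-p)}{u_h-u}_U\le C\|h\|_V^2,$$
which establishes $\V(\hat y_0+h)-\V(\hat y_0)=-\ipp{h}{p(0;\hat y_0)}_Y+O(\|h\|_V^2)$. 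Thus $\V$ is Fr\'{e}chet differentiable at $\hat y_0$, with derivative identified, through the Gelfand triple $V\hookrightarrow Y\hookrightarrow V'$, with $-p(0;\hat y_0)$. The Riesz representor lies in $C(U(y_0),V)$ because Proposition \ref{prop-p-W} ensures $p(0;\hat y_0)\in V$ and the map $\hat y_0\mapsto p(\hat y_0)$ is Lipschitz from $B_V(y_0,\delta_4)$ into $\Wiay\hookrightarrow C([0,\infty);V)$ by \eqref{eq:clr_lc_2}. The principal technical point is justifying the integration by parts at $t=\infty$, which rests squarely on the $\Wiay$-regularity and $V$-decay of $p$ developed in Proposition \ref{prop-p-W} (whose hardest ingredient was the uniqueness-of-multiplier step exploiting the eventual inactivity of the control constraint); once these are in hand, the sandwich argument is routine.
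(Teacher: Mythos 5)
Your proof is correct and follows exactly the technique the paper invokes (the paper itself only cites \cite[Theorem 4.10]{KP:2022} for this step): a second-order Taylor expansion of the cost, transfer of the linearized state term onto $-\ipp{h}{p(0;\hat y_0)}_Y$ via the adjoint equation and integration by parts justified by the $\Wiay$-regularity and $V$-decay from Proposition \ref{prop-p-W}, and a two-sided application of the variational inequality \eqref{adjP-3} together with the Lipschitz estimate \eqref{eq:clr_lc_2} to squeeze the control term to $O(\|h\|_V^2)$. The identification of the Riesz representor through the Gelfand triple and its continuity into $V$ also matches the paper's argument.
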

	
	\nin With Corollary \ref{clr:lip_con} available \eqref{eq:5.1} can be verified with the same techniques as the analogous one with $V$ replaced by $L^2(\Omega)$ given in \cite[Theorem 4.10]{KP:2022}. The claim concerning the Riesz representor follows from  \eqref{eq:5.1} and  Corollary \ref{clr:lip_con}.\\
	
	\nin For the final theorem we need an additional assumption.
	\begin{assumption}\label{asp_5}
		With the notation of the previous theorem there exists $T_0 \in (0,\infty)$ such that  $F ( y(\hat y_0,u)) \in C([0,T_{y_0}); V')$
		for all $\hat y_0 \in B(y_0,\delta_4)$ and all $u \in C([0,T_0); \mathcal{U})$,
		where $y(\hat y_0,u)$  denotes the solution to \eqref{eq_y} on $[0,T_0)$ with initial condition $\hat y_0$ and control $u$.
		%Let $(\bar y, \bar u)$ denote the locally optimal solution associated to $y_0 \in B_V(\delta_3)$. There exists $T_{y_0} > 0$ such that $F (\bar y) \in C([0,T_{y_0}); V)$. Further for each $\tilde u \in \mathcal{D}$ the solution $y(y_0,\tilde u)$ to \eqref{eq_y} with constant control $u=\tilde u$ on $[0,T_{y_0})$ satisfies $F(y(y_0,\tilde u))|[0,T_{y_0}) \in C([0,T_0);V)$.
	\end{assumption}

	\begin{thm}\label{thm-HJB-r}
		Let   assumptions \eqref{asp_1}-\eqref{asp_5} hold and let $(\bar y, \bar u)$ be a local solution  of \eqref{P} corresponding to an initial datum $y_0 \in B_V(\delta_3)$.
		Then the following Hamilton-Jacobi-Bellman equation holds for the local minimal value function (in the sense of $\hat U$ from Corollary \ref{clr:lip_con}) on $B(y_0,\delta_4)$:
		\begin{equation}\label{hjb}
		\V'(y)(A y + F(y)) + \frac{1}{2} \norm{y}^2_Y + \frac{\alpha}{2} \norm{ \mathbb{P}_{\mathcal{U}_{ad}} \left(-\frac{1}{\alpha}B^*\V'(y) \right)}^2_Y + \left( B^* \V'(y),\mathbb{P}_{\mathcal{U}_{ad}} \left(-\frac{1}{\alpha}B^*\V'(y) \right) \right)_Y = 0,
		\end{equation}
		and the feedback law is given by
		\begin{equation}\label{eq:5.3}
		u =  \mathbb{P}_{\mathcal{U}_{ad}} \left(-\frac{1}{\alpha}  B^*\V'({y}) \right).
		\end{equation}
	\end{thm}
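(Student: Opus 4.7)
The plan is to establish the HJB identity at an arbitrary point $y \in B_V(y_0,\delta_4)$ by combining a local dynamic programming principle (DPP) along the optimal trajectory with the differentiability result of Theorem \ref{thm-CD-r}, in particular the identification $\V'(y) = -\bar p(0;y)$ via the Riesz representor $\pi(y) := -\bar p(0;y) \in V$ depending continuously on $y$. For $y \in B_V(y_0,\delta_4)$ let $(\bar y,\bar u,\bar p)$ denote the locally optimal triple from Corollary \ref{clr:lip_con}. Local uniqueness from that corollary ensures that on a small interval $[0,h]$ with $h < T_0$, the shifted pair $(\bar y,\bar u)|_{[h,\infty)}$ remains the local optimum starting from $\bar y(h)$, so the DPP
\[
\V(y) - \V(\bar y(h)) \,=\, \int_0^h \ell(\bar y(s),\bar u(s))\,ds,\qquad \ell(z,v) := \tfrac{1}{2}\|z\|_Y^2 + \tfrac{\alpha}{2}\|v\|_{\calU}^2,
\]
holds. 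Since $\bar u \in C(\bar I;\calU)$ by Proposition \ref{prop-p-W} and $\bar y \in C(\bar I;V)$, dividing by $h$ and letting $h \to 0^+$ the right-hand side converges to $\ell(y,\bar u(0))$.

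For the left-hand side I would invoke the $C^1$ integral chain rule
\[
\V(\bar y(h)) - \V(y) = \int_0^1 \bigl\langle \bar y(h) - y,\, \pi\bigl(y + t(\bar y(h)-y)\bigr)\bigr\rangle_{V',V}\, dt,
\]
where the pairing extends $\V'(\cdot)$ to $V'$ via the Gelfand triple because $\pi(\cdot) \in V$. By Assumption \eqref{asp_5} together with $Ay \in V'$, continuity of $A\bar y$ in $V'$, and $B\bar u \in C(\bar I;Y) \subset C(\bar I;V')$, the right derivative $\bar y_t(0^+) = Ay + F(y) + B\bar u(0)$ exists and is continuous in $V'$ at $s=0$, so $h^{-1}(\bar y(h)-y) \to \bar y_t(0^+)$ in $V'$. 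Combined with continuity of $\pi$ as a $V$-valued map and a dominated convergence argument,
\[
\lim_{h\to 0^+} \frac{\V(\bar y(h))-\V(y)}{h} \,=\, \bigl\langle Ay + F(y) + B\bar u(0),\,\pi(y)\bigr\rangle_{V',V} \,=:\, \V'(y)\bigl(Ay + F(y) + B\bar u(0)\bigr).
\]
Equating the two limits yields
\[
\V'(y)\bigl(Ay + F(y)\bigr) + \bigl(B^*\V'(y),\bar u(0)\bigr)_{\calU} + \ell(y,\bar u(0)) = 0,
\]
and the variational inequality \eqref{adjP-3} together with $\V'(y) = -\bar p(0)$ identifies $\bar u(0) = \mathbb{P}_{\calU_{ad}}\bigl(-\tfrac{1}{\alpha} B^*\V'(y)\bigr)$, which is the feedback law \eqref{eq:5.3}; substituting this expression for $\bar u(0)$ into the previous identity produces \eqref{hjb}.

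The main obstacle is the chain rule step: $\bar y_t$ belongs a priori only to $L^2(I;Y)$, so it need not be continuous at $s=0$ in $V$, and the classical chain rule for $\V\circ \bar y$ in $V$ is not directly available. The resolution rests on three ingredients: Assumption \eqref{asp_5} upgrades $\bar y_t(0^+)$ to an element of $V'$ with time-continuity at $s=0$; the $V$-regularity of the Riesz representor from Theorem \ref{thm-CD-r} permits the pairing $\langle \cdot,\pi(y)\rangle_{V',V}$ with $V'$ arguments and thereby extends $\V'(y)$ to a continuous functional on $V'$; and the integral form of the fundamental theorem of calculus shifts the computation into $V'\times V$ where dominated convergence applies. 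A secondary issue is the local DPP itself: since the optimum is only local, the local uniqueness within $\hat U$ from Corollary \ref{clr:lip_con} is what ensures that the restarted trajectory at $\bar y(h)$ still corresponds to the local value function on $B_V(y_0,\delta_4)$.
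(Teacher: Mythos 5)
Your argument is correct and its core coincides with the paper's: both rest on the dynamic programming identity along the locally optimal trajectory, the convergence of the difference quotient $\tau^{-1}(\hat y(\tau)-\hat y_0)$ in $V'$ (you obtain it from the time-continuity of $A\bar y+F(\bar y)+B\bar u$ as a $V'$-valued function, granted by \eqref{asp_5} and $\bar y\in C(\bar I;V)$; the paper obtains it from the variation-of-constants formula with the extended semigroup $e^{A_{ext}t}$ --- both are legitimate), and the integral mean-value representation of $\V(\hat y(\tau))-\V(\hat y_0)$ paired against the $V$-valued, continuously varying Riesz representor $-p(0;\cdot)$. The one genuine divergence is how the feedback law is extracted. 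The paper's \emph{Step 2} runs a second dynamic-programming (needle-variation) argument: it inserts an arbitrary constant control $u\in\calU_{ad}$ on a short initial interval, derives the supersolution inequality $\V'(\hat y_0)\big(A\hat y_0+F(\hat y_0)+Bu\big)+\ell(\hat y_0,u)\ge 0$ for all $u\in\calU_{ad}$, and identifies $\hat u_0$ as the minimizer of this quadratic in $u$. You instead read the projection formula $\bar u(t)=\mathbb{P}_{\calU_{ad}}\big(\tfrac{1}{\alpha}B^*\bar p(t)\big)$ directly off the first-order condition \eqref{adjP-3} (valid at $t=0$ by the continuity of $\bar u$ and $\bar p$ established in Proposition \ref{prop-p-W}) and use $\V'(y)=-\bar p(0;y)$. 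Your route is shorter and sidesteps the need to verify that the needle-perturbed control remains within the region of local optimality; the paper's route additionally establishes that $\V'$ genuinely minimizes the Hamiltonian over $\calU_{ad}$, i.e., the ``$\min_u$'' form of the HJB equation, without invoking the adjoint-based optimality system. Both yield \eqref{hjb} and \eqref{eq:5.3}; the only point you share with the paper that deserves more care than either of you gives it is the validity of the dynamic programming identity for a merely \emph{local} value function, which requires that the shifted triple at time $h$ still lies in the neighborhood $\hat U$ of Corollary \ref{clr:lip_con} --- true for small $h$ by continuity, and worth stating explicitly.
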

	\begin{proof}
		The structure of the proof is rather standard,  \cite[Theorem 5.1]{KP:2022}. But due to regularity issues  special treatment is required. Also compared to \cite{KP:2022}  we modify  some arguments to allow local rather than global solutions.
		% Recalling \eqref{eq:kk2} we define $\mathcal{A}=A-\rho I$ and set $R_\lambda =(I-\lambda \mathcal{A})^{-1}$ for each $\lambda\in (0,\infty)$. We have $R_\lambda \in \mathcal{L}(Y)\cap \mathcal{L}(V)$, $R_\lambda \to I $ strongly in $Y$ and in $V$, and $R_\lambda$ commutes with $A$ on $\mathcal{D}(A)$. We shall also use that the range of $R_\lambda$ is contained in $\mathcal{D}(A)\subset V$.
		Let  $\hat y_0 \in B_V(y_0,\delta_4)$  with associated local solution  and  adjoint state $(\hat{y}, \hat{u},\hat p) \in \hat U$ . In particular we have that $\ds \hat{u}(t) = \mathbb{P}_{\mathcal{U}_{ad}} \left( \frac{1}{\alpha}B^*{\hat p}(t) \right)$, and since $ \hat p \in C([0,\infty);V)$ it holds $ \hat u\in C([0,\infty);\mathcal{U})$. Let $\hat u_0$ denote the limit of $\hat{u}$ as time $t$ tends to $0$.
		Since $\hat{y} \in C([0,\infty); V)$ and since $B_V(y_0,\delta_4) $ is open there exists $\tau_{\hat y_0} \in (0,T_0]$  such that $\hat{y}(t) \in B_V(y_0,\delta_4) $, for all $t \in [0,\tau_{\hat y_0})$, and $\mathcal{V}$ is well-defined there.\\
		
		%\nin \underline{Step 1:}	
		\nin {\em{Step 1.}}	Let us first prove that
		\begin{equation}\label{hjb.1}
		\V'(\hat y_0) \big(A \hat y_0 + F(\hat y_0) + B \hat u_0 \big) + \ell(\hat y_0, \hat u_0) = 0,
		\end{equation}
		where $\ds \ell(y,u)= \frac{1}{2}\|y\|_Y^2 + \frac{\alpha}{2}\|u\|_\mathcal{U}^2$. Since  by the previous theorem the Riesz representor of $\V'(\hat y_0)$ is an element of $V$, and since the arguments of $\V'(\hat y_0)$ are all contained in $V'$, the left hand side of the above equality is well-defined. Here we note that \eqref{asp_5} implies that $F(\hat y_0)\in V'$.\\
		
		\nin To verify the equality   we use the dynamic programming principle in the form
		\begin{equation}\label{v-lim}
		\frac{1}{\tau} \int_{0}^{\tau} \ell(\hat{y}(s), \hat{u}(s))ds + \frac{1}{\tau} \big( \V(\hat{y}(\tau)) - \V(\hat y_0) \big) = 0,
		\end{equation}
		for  $\tau \in (0,\tau_{\hat y_0}))$. By continuity of $\hat{y}$ and $\hat{u}$  in $Y$, respectively $\mathcal{U}$ at time $0$, the first term  converges to $\ds \ell(\hat y_0, \hat u_0)$ as $\tau \to 0$. To take $\tau \to 0$ in the second term we first consider
		\begin{equation}\label{eq:aux5}
		\frac{1}{\tau}\big( \hat{y}(\tau) -  \hat y_0 \big) = \frac{1}{\tau}\big(e^{A_{ext} \tau} \hat y_0 - \hat  y_0 \big) + \frac{1}{\tau} \int_{0}^{\tau} e^{A_{ext}(\tau - s)} \big[ F(\hat {y}(s)) + B \hat{u}(s) \big] ds,
		\end{equation}
		where $e^{A_{ext} t}$ denotes the extension of the semigroup $e^{At}$ on $Y$, to  $V'$. This follows by \cite[Theorem 5.5]{EN:2000} and an application of interpolation theory. Using \eqref{asp_5} and $ B\hat u \in C([0,\infty);Y)$ we can pass to the limit in $V'$ in \eqref{eq:aux5} to obtain that
		\begin{equation}\label{y-lim}
		\lim_{\tau \to 0^+}\frac{1}{\tau}\big( \hat{y}(\tau) - \hat  y_0 \big)  = A \hat  y_0 + F(\hat  y_0) + B \hat u_0 \text{ in } V'.
		\end{equation}
		Now we return to the second term in \eqref{v-lim} which we express as
		\begin{equation}\label{eq:aux6}
		\begin{aligned}
		\frac{1}{\tau} \big( \V(\hat  {y}(\tau)) - \V(\hat y_0) \big) =&\int_{0}^{1} \V' \big(\hat y_0 + s ( \hat  {y}(\tau) - \hat  y_0 ) \big)\, \frac{1}{\tau}( \hat {y}(\tau) - \hat  y_0 )\,ds \\
		=&\int_{0}^{1}\langle \hat p(0;(y_0 + s ( \hat {y}(\tau) - \hat  y_0 ))\; , \frac{1}{\tau}( \hat {y}(\tau) -\hat   y_0 ) \rangle_{V,V'}\, ds.
		\end{aligned}
		\end{equation}
		Using \eqref{y-lim} and since $y \to \hat p( 0;y)$ is continuous from $V$ to itself at  $\hat  y_0$  by Proposition \ref{clr:lip_con},  we can pass to the limit in \eqref{eq:aux6} to obtain
		\begin{equation}
		\lim_{\tau \to 0^+}\frac{1}{\tau} \big( \V(\hat{y}(\tau)) - \V(\hat  y_0) \big)= \V'(\hat  y_0) \big(A \hat  y_0 + F(\hat  y_0) + B \hat u_0 \big).
		\end{equation}
		\nin Now we can pass to the limit in \eqref{v-lim} and obtain \eqref{hjb.1}.\\
		
		\nin {\em{Step 2:}} For $u \in  \mathcal{U}_{ad}$  we define $\tilde{u}\in U_{ad}$ by,
		\begin{equation*}
		\wti{u}(t) =
		\begin{cases}
		u \quad \text{for } \tau \in (0,\hat \tau) \\
		\hat u(t) \quad \text{ for } \tau \in [\hat \tau, \infty)
		\end{cases}
		\end{equation*}
		\nin and set $\tilde{y} = y(\hat  y_0,\tilde{u})$ as the solution to  \eqref{eq_y}.
		Here $0< \hat \tau \le \min(T_{y_0})$ is chosen sufficiently small so that $\tilde u$ lies in the region of locality of  the local solution $\hat u$.
		Also note that  $\tilde y(t) \in B_V(\hat y_0,\delta_4)$, for all $t$ sufficiently small, and hence   $\V(\tilde{y}(t))$ is well-defined for all small $t$. By local optimality of $\hat u$ we have
		\begin{equation*}
		\frac{1}{\tau} \int_{0}^{\tau} \ell(\tilde{y}(s), \tilde u(s))ds + \frac{1}{\tau} \big( \V(\tilde{y}(\tau)) - \V(\hat  y_0) \big) \geq 0,
		\end{equation*}
		for all $\tau$ sufficiently small.
		We next  pass to the limit $\tau \to 0^+$  in the above inequality. This is trivial for the first term which tends to $\ell(\hat  y_0, u)$. For the second one we can argue as above, replacing $(\hat y, \hat u)$ by $(\tilde y, \tilde u)$ in \eqref{eq:aux5} and using \eqref{asp_5}, resulting in
		
		\begin{equation}\label{hjb.2}
		\V'(\hat  y_0) \big(A \hat y_0 + \calF(\hat y_0) + B u \big) + \ell(\hat y_0, u) \ge 0,
		\end{equation}
		for arbitrary $u \in \mathcal{U}_{ad}$.  This inequality is an equality if $u = \hat u_0$, and thus the quadratic function on the left had side of \eqref{hjb.2} reaches its minimum $0$ at $u =  \hat u_0$. This implies that
		%\begin{equation*}
		$\ds \hat u_0 =  \mathbb{P}_{\mathcal{U}_{ad}} \left(-\frac{1}{\alpha} B^* \V'(\hat y_0) \right).$
		%\end{equation*}
		Inserting this expression into  \eqref{hjb.1} we obtain \eqref{hjb} at $y=\hat y_0$, and \eqref{eq:5.3} at $(y,u)=(\hat y_0, \hat u_0)$ holds as well. Since $\hat y_0 \in B(y_0,\delta_4)$ was chosen arbitrarily the proof is complete.
		%		\begin{equation}
		%		\V'(y_0)(\calA y_0 + \calF(y_0)) + \frac{1}{2} \norm{y_0}^2_Y + \frac{\alpha}{2}\norm{ \mathbb{P}_{\mathcal{U}_{ad}}\left(\frac{-1}{\alpha} B^* \V'(y_0) \right)}^2_Y + \left\langle B^* \V'(y_0),\mathbb{P}_{\mathcal{U}_{ad}}\left(\frac{-1}{\alpha} B^* \V'(y_0) \right) \right\rangle_Y = 0.
		%		\end{equation}
	\end{proof}
	
	\section{Applications}
	In the final section we discuss the applicability  of the presented theory for  selected examples.
	Throughout  $\Omega$ denotes an open connected bounded subset of $\BR^d,$  with $\Omega$ convex or with  a $C^{1,1}$ boundary $\Gamma$.  The associated space-time cylinder is denoted by $Q = \Omega \times (0,\infty)$ and the associated lateral boundary by  $\Sigma = \Gamma \times (0,\infty)$. By $\nu$ and $\partial_{\nu}$, we denote the outward unit normal and the associated outward normal derivative on $\Gamma$. The symbol $\lesssim$ means an inequality up to some constant $C > 0$.
	
	\subsection{The Schl\"{o}gl model in $d\in \{1,2,3\}$.}	\label{ss_sch}
	Here we consider the  optimal stabilization problem for the Schl\"{o}gl model also known as Nagumo model under control constraints in dimension $d\in \{1,2,3\}$. In the earlier paper \cite{BK:2019}, where the initial conditions where taken in $L^2(\Omega)$ the dimension was restricted to $d=1$. Here we treat 	
	\begin{equation}\label{PSch}
	\V(y_0) = \inf_{\bem y \in \Wiay \\ u \in \Uad \eem} \frac{1}{2} \int_{0}^{\infty} \norm{y(t)}^2_Y dt+ \frac{\alpha}{2} \int_{0}^{\infty} \norm{u(t)}^2_{\calU}dt,
	\end{equation}
	\nin subject to the semilinear parabolic equation		
	\begin{empheq}[left=\empheqlbrace]{align}
	y_t &= \Delta y + R(y) + B u  &\text{ in } Q \\
	\partial_{\nu} y &= 0 &\text{ on } \Sigma \\
	y(x,0) &= y_0 \quad &\text{ in } \Omega.
	\end{empheq}
	\nin where $R$ is the cubic polynomial of the form,
	\begin{equation*}
	R(y) = ay(y - \xi_1)(y - \xi_2), \text{with  real numbers } \xi_1, \xi_2, \text{and } a < 0,
	\end{equation*}
	and $B\in \mathcal{L}(\mathcal{U},Y)$. Note that $R(y)=ay^3 + by^2+cy$, with $b=a(\xi_1+\xi_2)$ and $c=a\xi_1\xi_2$. The origin of the uncontrolled system  is locally unstable, if $\xi_1 \xi_2<0$ and exponentially stable if $\xi_1 \xi_2>0$.
	To  cast this problem in the framework of Section \ref{Sec-Diff}, we set $V=H^1(\Omega)$, $a(v,w)= (\nabla v,\nabla w) - c(v,w)$ and  associated  operator
	\begin{equation}
	Ay = (\Delta + c \bs{I}) y, \text{ with } \calD(A) = \{y \in H^2(\Omega): \partial_{\nu}y|_{\Gamma} = 0\}
	\end{equation}
	and
	\begin{equation}
	F(y) = ay^3 + by^2 .
	\end{equation}
	
	Clearly \eqref{eq:kk2} is satisfied. Concerning condition  \eqref{asp_1}, if $c<0$ then the semigroup generated by $A$ is exponentially stable. In case $c\ge 0$ feedback stabilization by finite dimensional controllers was analyzed in \cite{RT:1975}, and \cite{KR:2019} for instance. It is left  to the reader to  check that the nonlinearity $F$ is twice differentiable as a mapping $F: \Wiay \to \Liy$. For the sake of illustration, we verify the continuity  of the bilinear form of $F''$ on $\Wiay \times \Wiay$. For this purpose, we take $v_1,v_2 \in \Wiay$ and  $y_1, y_2 \in \Wiay$
	and estimate, setting $\delta y= y_2-y_1$
	\begin{align*}
	\norm{(F''(y_2)- F''(y_1))(v_1,v_2)}^2_{\Liy} &\le 6 \int_{0}^{\infty} \int_{\Omega} \abs{a\,\delta y \, v_1v_2}^2 dxdt, \\
	\le 6 \int_{0}^{\infty} |a| \norm{\delta y}^2_{L^6(\Omega)}\norm{v_1}^2_{L^6(\Omega)}\norm{v_2}^2_{L^6(\Omega)} dt &\le 6 C_1
	\norm{v_1}^2_{\Wiay}\norm{v_2}^2_{\Wiay} \int_{0}^{\infty} \norm{\delta y}^2_{\calD(A)}dt ,
	\end{align*}
	where  $C_1$ depends on the continuous embedding $V \to L^6(\Omega)$, which  holds for $d\le3$, and the continuous injection $W_\infty(\mathcal{D}(A),Y) \to C(I;V)$ is used. We also have $F(0) = F'(0) = 0$  and thus  \eqref{eq:fprime} is satisfied. The  following Lemma  together with the compact embedding $ W(0,T;\mathcal{D}(A),Y)\to L^2(0,T;V)$, see e.g. \cite[Satz 8.1.12, p213]{EE:2004}, implies that \eqref{asp_3} is satisfied with $\mathcal{H}=Y$.

	\begin{lemma}\label{lem_dif_z}
		For $y_1, y_2 \in W(0,T;\mathcal{D}(A),Y)$ and $z \in L^{\infty}(0,T;Y)$ the following estimates hold:
		\begin{align}
		\int_{0}^{T} | \ip{y^3_1 - y^3_2}{z}_Y| \ dt & \nonumber\\
		\leq {C_2} & \norm{z}_{L^2(0,T;Y)} \norm{y_1 - y_2}_{L^2(0,T;V)}  \left[\norm{y_1}^2_{W(0,T;\mathcal{D}(A),Y)} + \norm{y_2}^2_{W(0,T;\mathcal{D}(A),Y)}\right],\\
		\int_{0}^{T}| \ip{y^2_1 - y^2_2}{z}_Y| \ dt &\leq {C_3} \norm{z}_{L^2(0,T;Y)}\norm{y_1 - y_2}_{L^2(0,T;V)}\left[\norm{y_1}_{L^2(0,T;V)} + \norm{y_2}_{L^2(0,T;V)}\right],
		\end{align}
		where $C_2, C_3 > 0$ are independent of $y_1,y_2$, and $z$.
	\end{lemma}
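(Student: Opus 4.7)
The plan is to prove both inequalities by a common two-step scheme: factor the polynomial difference and apply H\"older's inequality in $\Omega$ combined with the Sobolev embeddings available in dimension $d \le 3$ to obtain a pointwise-in-$t$ bound; then integrate in $t$ by a further application of H\"older's (or Cauchy--Schwarz's) inequality, invoking the embedding $W(0,T;\calD(A),Y) \hookrightarrow C([0,T];V)$ recalled in Section~2 whenever an extra power of regularity needs to be absorbed.

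For the cubic estimate I would use the algebraic factorization $y_1^3-y_2^3 = (y_1-y_2)(y_1^2 + y_1 y_2 + y_2^2)$ and apply H\"older in $\Omega$ with exponents $(6,3,2)$. Since $V = H^1(\Omega) \hookrightarrow L^6(\Omega)$ for $d \le 3$, this yields pointwise in $t$
\begin{equation*}
|\langle y_1^3-y_2^3, z\rangle_Y| \le C \|y_1-y_2\|_V \bigl(\|y_1\|_V^2 + \|y_1\|_V\|y_2\|_V + \|y_2\|_V^2\bigr) \|z\|_Y.
\end{equation*}
Bounding the quadratic bracket in $L^\infty(0,T)$ by a multiple of $\|y_1\|_{W(0,T;\calD(A),Y)}^2 + \|y_2\|_{W(0,T;\calD(A),Y)}^2$ via $W(0,T;\calD(A),Y) \hookrightarrow C([0,T];V)$, and then applying Cauchy--Schwarz in time to the remaining product $\|y_1-y_2\|_V \|z\|_Y$, gives the first estimate with the stated constant $C_2$.

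For the quadratic estimate the same scheme applies with the factorization $y_1^2-y_2^2 = (y_1-y_2)(y_1+y_2)$ and H\"older in $\Omega$ with, say, exponents $(4,4,2)$ (using $V \hookrightarrow L^4(\Omega)$ for $d \le 3$). One obtains pointwise
\begin{equation*}
|\langle y_1^2-y_2^2, z\rangle_Y| \le C \|y_1-y_2\|_V \|y_1+y_2\|_V \|z\|_Y,
\end{equation*}
after which H\"older's inequality in $t$ is used to regroup the three factors into the $L^2(0,T;V)$ norms of $y_1-y_2$ and $y_1+y_2$ together with a norm of $z$, exploiting the hypothesis $z \in L^\infty(0,T;Y)$ to handle the third factor when needed.

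The main obstacle I expect is the bookkeeping of exponents in the time integration for the second estimate, since three norms of $L^2$-type cannot be balanced by H\"older alone: one of the three factors has to be dominated in a stronger norm, either by pulling $\|y_1+y_2\|_V$ into $L^\infty(0,T)$ through the $W \hookrightarrow C([0,T];V)$ embedding, or by trading the $L^2$-control of $z$ for its $L^\infty$-control. Once this matching is settled, both inequalities reduce to standard Sobolev-embedding arguments that rely essentially on the dimension restriction $d \le 3$ and on the ambient regularity $y_i \in W(0,T;\calD(A),Y)$.
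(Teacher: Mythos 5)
Your proposal is correct and follows essentially the same route as the paper: factor the polynomial difference, apply H\"older together with $V\hookrightarrow L^6(\Omega)$ (valid for $d\le 3$) in space, then H\"older in time combined with the embedding $W(0,T;\calD(A),Y)\hookrightarrow C([0,T];V)$. The only real difference for the cubic term is which factor absorbs the $L^\infty$-in-time control: you place the whole quadratic bracket in $C([0,T];V)$ and keep $z$ in $L^2(0,T;Y)$, which reproduces the displayed bound literally, whereas the paper's own chain puts $z$ in $L^\infty(0,T;Y)$ and splits $\norm{y_i^2}_{L^2(0,T;L^3(\Omega))}\le C\norm{y_i}_{C([0,T];V)}\norm{y_i}_{L^2(0,T;V)}$; both land in the same final bracket.

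The ``obstacle'' you flag for the quadratic estimate is genuine, but it is a defect of the printed statement rather than of your argument. With three $L^2$-in-time factors on the right-hand side the second inequality cannot hold: take $y_2=0$, $y_1(t,x)=\eta(t/\varepsilon)\psi(x)$ and $z(t,x)=\eta(t/\varepsilon)\zeta(x)$ with a fixed bump $\eta$; the left side scales like $\varepsilon$ while the right side scales like $\varepsilon^{3/2}$, and the $W(0,T;\calD(A),Y)$-norm of $y_1$ (which does blow up) does not appear on that right-hand side to save the estimate. Of your two repair options, only the second works toward the intended conclusion: pulling $\norm{y_1+y_2}_V$ into $L^\infty(0,T)$ produces a $C([0,T];V)$-norm, which is not dominated by $\norm{y_1}_{L^2(0,T;V)}+\norm{y_2}_{L^2(0,T;V)}$. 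The correct reading, consistent with the paper's own computation for the cubic term and with the only use of the lemma (verifying Assumption (A3) with $\calH=Y$, where $z\in L^\infty(0,T;Y)$ and $y_n\to \hat y$ strongly in $L^2(0,T;V)$ by compactness), is $\norm{z}_{L^\infty(0,T;Y)}$ in place of $\norm{z}_{L^2(0,T;Y)}$ in the second inequality; with that substitution your scheme closes immediately.
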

	\begin{proof}For the first inequality, we estimate, using embedding constants $C_i$ independent of $y_1,y_2,z$
		\begin{align*}
		\int_{0}^{T} |\ip{y^3_1 - y^3_2}{z}_Y |\ dt &= \int_{0}^{T}\int_{\Omega} |(y^3_1 - y^3_2)z| \ dxdt\\
		&\leq C_4 \int_{0}^{T} \norm{z}_{Y} \norm{y_1 - y_2}_{L^6(\Omega)} \norm{y^2_1 + y_1y_2 + y^2_2}_{L^3(\Omega)} dt\\
		&\leq C_5 \norm{z}_{L^{\infty}(0,T;Y)} \norm{y_1 - y_2}_{L^2(0,T;V)} \norm{y^2_1 + y^2_2}_{L^2(0,T;L^3(\Omega))}.
		\end{align*}
		\nin Then we estimate, for $i = 1,2$,
		\begin{multline*}
		\norm{y^2_i}_{L^2(0,T;L^3(\Omega))} = \left[\int_{0}^{T} \norm{y^2_i}^2_{L^3(\Omega)} dt\right]^{\rfrac{1}{2}} = \left[\int_{0}^{T} \left(\int_{\Omega} \abs{y_i}^6 dx\right)^{\rfrac{2}{3}}dt\right]^{\rfrac{1}{2}} = \left[\int_{0}^{T} \norm{y_i}^4_{L^6(\Omega)}dt\right]^{\rfrac{1}{2}} \\\leq C_6 \norm{y_i}_{C(0,T;V)}\norm{y_i}_{L^2(0,T;V)} \leq C_7 \norm{y_i}^2_{W(0,T;\mathcal{D}(A),Y)}.
		\end{multline*}
		\nin This proves the first inequality. The verification of the second inequality is left to the reader.
		%Then for the second inequality, we estimate,
		%		\begin{align*}
		%		\int_{0}^{T} \ip{y^2_1 - y^2_2}{z}_Y \ dt &= \int_{0}^{T}\int_{\Omega} (y^2_1 - y^2_2)z \ dxdt \leq C_5 \int_{0}^{T} \norm{z}_Y \norm{y_1 - y_2}_{L^4(\Omega)}\norm{y_1 + y_2}_{L^4(\Omega)} dt \\
		%		&\leq C_6 \norm{z}_{L^{\infty}(0,T;Y)} \norm{y_1 - y_2}_{L^2(0,T;L^4(\Omega))} \norm{y_1 + y_2}_{L^2(0,T;L^4(\Omega))} dt\\
		%		&\leq C_7 \norm{z}_{L^{\infty}(0,T;Y)} \norm{y_1 - y_2}_{L^2(0,T;V)} \left[\norm{y_1}_{L^2(0,T;V)} + \norm{y_2}_{L^2(0,T;V)}\right].
		%		\end{align*}
		%		\nin This proves the second inequality.
	\end{proof}
	\nin Now we turn to \eqref{asp_4} and  verify that $F'(y) = 3 a y^2 + 2b y\in \calL \left( L^2(I;V), L^2(I,Y)\right) $ . We concentrate on the term $y^2$ and estimate for $z \in L^2(I;V)$:
	\begin{multline*}
	\int^\infty_0 \int_\Omega |y|^4 |z|^2 dx\,dt \le \int^\infty_0 \left(\int_\Omega|y|^6 dx \right)^{\rfrac{2}{3}}  \left(\int_\Omega|z|^6 dx \right)^{\rfrac{1}{3}}dt \le \int_0^\infty \norm{y}^4_{L^6(\Omega)} \norm{z}^2_{L^6(\Omega) } dt \\ \le C_8\norm{y}^4_{W_\infty(I;\mathcal{D}(A),Y)} \norm{z}^2_{L^2(I;V)},
	\end{multline*}
	and the claim follows. Finally for \eqref{asp_5}, we utilize the fact that $V \subset L^6(\Omega)$ for $d \leq 3$, and $y \in C(I;V)$ for $y_0 \in B_V(\delta_4)$. This implies $F(y) \in C(I;Y) \subset C(I;V')$.
	%\todo[inline]{I can fill in more details if you think it is necessary.}
	%\todo[inline]{Buddhika: in your notes you say that you use Gagliardo inequality, but I do not see why this is necessary}
	%In a similar manner one checks that  $F'(\bar y) \in \calL\left( L^2(I;V), L^2(I,Y)) \right)$.
	%\todo[inline]{Buddhika: please check}
	
	\subsection{Quartic nonlinearity $y^4$ in $d\in \{1, 2\}$.}
	
	\nin In this case we consider a  semilinear parabolic problem with $F(y) = k y^4$ in dimension $d\in\{1,2\}$.  The computations will be carried out for $d = 2$ but $d = 1$ will readily follow. Specifically the controlled system is given as follows:
	\begin{subequations}\label{qt_mdl}
		\begin{empheq}[left=\empheqlbrace]{align}
		y_t&=  \Delta y + k y^4 + B u  &\text{ in } Q \\
		y &= 0 &\text{ on } \Sigma \\
		y(x,0) &= y_0 \quad &\text{ in } \Omega.
		\end{empheq}
	\end{subequations}
	For this model \eqref{asp_1} is satisfied  with $A$ the Laplacian $Y = L^2(\Omega)$ with Dirichlet boundary conditions. It generates an asymptotically stable analytic semigroup. We use Gagliardo's inequality \cite[p173]{BF:2013} in dimension two to show that $F:{W_\infty(I;\mathcal{D}(A),Y)}\to L^2(I;Y)$ with $F(y)=ky^4$ is well-defined:
	\begin{align*}
	\norm{y^4}^2_{\Liy}  = \int_{0}^{\infty} \norm{y}^8_{L^8(\Omega)}dt \lesssim \int_{0}^{\infty} \left[ \norm{y}^{\rfrac{1}{4}}_{Y}\norm{y}^{\rfrac{3}{4}}_{V} \right]^8 dt \leq C \norm{y}^6_{\Wiay}\norm{y}^2_{\Liy}.
	\end{align*}
	\nin Moreover, assumption  \eqref{asp_2} requires us to show that $F$ is twice continuously differentiable. It can be checked that $F$ is twice continuously differentiable with derivatives given by
	$F'(y) = 4ky^3$ and $F''(y) = 12ky^2$.
	By computations as carried out in Lemma \ref{lem_dif_z}, one can deduce $F'$ and $F''$ are bounded on bounded subsets of $\Wiay$.\\
	
	%\todo[inline]{should we do a  little ?????}
	\nin To verify \eqref{asp_3}, we take $z \in L^{\infty}(0,T;Y)$ and estimate for $y_1,y_2 \in  W_\infty(I;\mathcal{D}(A),Y)$:
	\begin{align*}
	\int_{0}^{T} |\ip{y_1^4 - y_2^4}{z}| dt &\lesssim \int_{0}^{T} \int_{\Omega} |(y_1 - y_2)\left( y_1^3 + y_2^3 \right)z| \ dxdt \lesssim \int_{0}^{\infty} \norm{y_1 - y_2}_{L^4(\Omega)} \norm{y_1^3 + y_2^3}_{L^4(\Omega)}\norm{z}_Y dt,\\
	&\lesssim \norm{z}_{L^{\infty}(0,T;Y)} \int_{0}^{T} \norm{y_1 - y_2}_{L^4(\Omega)}\left( \norm{y_1}^3_{L^{12}(\Omega)} + \norm{y_2}^3_{L^{12}(\Omega)} \right)dt,\\
	&\lesssim \norm{z}_{L^{\infty}(0,T;Y)} \int_{0}^{T} \norm{y_1 - y_2}_V \left( \norm{y_1}^3_{V} + \norm{y_2}^3_{V} \right)dt,\\
	\lesssim \norm{z}_{L^{\infty}(0,T;Y)} & \norm{y_1 - y_2}_{L^2(0,T;V)} \left( \norm{y_1}^2_{C(0,T;V)} + \norm{y_2}^2_{C(0,T;V)} \right) \left( \norm{y_1}_{L^2(0,T;V)} + \norm{y_2}_{L^2(0,T;V)} \right).
	\end{align*}
	This implies \eqref{asp_3}, since weak convergence in $W_\infty(I;\mathcal{D}(A),Y)$ implies strong convergence in $L^2(0,T;V)$. For \eqref{asp_4}, we show $F'(y) = 4 k y^3 \in \calL{(\LiV, L^2(I;Y))}$ for $y \in \Wiay$. We estimate for $z \in \LiV$,
	\begin{equation*}
	\norm{F'(y)z}^2_{\Liy} \lesssim \int_{0}^{\infty} \int_{\Omega} \abs{y^3 z}^2 dxdt \lesssim \int_{0}^{\infty} \norm{y}^6_{L^8(\Omega)} \norm{z}^2_{L^8(\Omega)}dt \leq C \norm{y}^6_{\Wiay} \norm{z}^2_{L^2(I;V)}.
	\end{equation*}
	Assumption \eqref{asp_5} follows by an analogous  argumentation as presented in Section \ref{ss_sch}.\\
	
	\nin With similar arguments the quintic nonlinearity can be considered in dimension 1.

	\subsection{Nonlinearities induced by functions with globally Lipschitz continuous second derivative.}
	
	%\tcr{I made many changes in this subsection}

	Consider the system (\hyperref[Pq]{$\calP$}) with $A$ associated to a strongly elliptic second order operator with domain $H^2(\Omega) \cap H^1_0(\Omega)$, so that \eqref{asp_1} are satisfied. Let $F: \Wiay \to \Liy$ be the Nemytskii operator associated to a mapping $\mathfrak f: \BR \to \BR$ which is assumed to be $C^2(\BR)$ with first and second derivatives globally Lipschitz continuous.  We discuss assumption \eqref{asp_2}-\eqref{asp_5} for such an  $F$, and show that they are satisfied for dimensions $d\in \{1, 2, 3\}$. By direct calculation it can be checked that  $F$ is continuously Fr\'{e}chet differentiable for $d \in \{ 1,2,3 \}$. We leave this part to the reader and immediately turn to the second derivative.
	For $y, h_1, h_2 \in \Wiay $  the relevant  expression is given by
	\begin{multline*}
	\norm{F'(y + h_2)h_1 - F'(y)h_1 - F''(y)(h_1,h_2)}^2_{\Liy}\\
	\quad = \int_{0}^{\infty} \int_{\Omega} \abs{(\mathfrak{f}'(y(t,x) + h_2(t,x)) - \mathfrak{f}'(y(t,x)) - \mathfrak{f}''(y(t,x))h_2(t,x)) h_1(t,x)}^2 dxdt \\
	\quad = \int_{0}^{\infty} \int_{\Omega} \abs{g(t,x) \ h_2(t,x)h_1(t,x)}^2 dxdt,
	\end{multline*}
	
	\nin where  $\ds g(t,x) = \int_0^1 (\mathfrak{f}''(y(t,x) + sh_2(t,x)) - \mathfrak{f}''(y(t,x)))ds$.
	Let us denote the global Lipschitz constant of  $\mathfrak{f}''$ by $L$. Then  we estimate
	\begin{align*}
	&\int_{0}^{\infty} \int_{\Omega} \abs{gh_1h_2}^2 dxdt \leq
	\frac{L^2}{4}\int^\infty_0  \norm{h_2(t)}^4_{L^6(\Omega)} \norm{h_1(t)}_{L^6(\Omega)}^2 dt, \\
	&\le \frac{L^2}{4}\int^\infty_0  \norm{h_2(t)}^4_{V} \norm{h_1(t)}_{V}^2 dt
	\leq C \norm{h_2}_{\Wiay}^4  \int^\infty_0 \norm{h_1(t)}_{V}^2 dt\\
	&\leq \tilde C \norm{h_2}^4_{\Wiay} \norm{h_1}^2_{\Wiay}.
	\end{align*}
	This implies that $F$ admits a second Fr\'{e}chet derivative which is bounded on bounded subsets of $\Wiay$.
	Its continuity with respect to $y$ can be checked with similar arguments.
	%%%%%%%%%%%%%%%%%%%%%
	\nin In order to verify \eqref{asp_3}, we set $\mathcal{H}=Y=L^2(\Omega)$ and consider a sequence
	$y_n \rightharpoonup \hat{y}$ in $W(0,T; \mathcal{D}(A),Y)$ and let $z \in L^{\infty}(0,T;\calH) \subset L^2(0,T;Y)$ be given. Then we estimate
	\begin{equation*}
	\int_{0}^T |\ip{F(y_n) - F(\hat{y})}{z}_{\calH',\calH}| dt = \int_{0}^T \int_{\Omega} |(\mathfrak{f}(y_n) - \mathfrak{f}(\hat{y}))z| \ dxdt \leq C \norm{y_n - \hat{y}}_{L^2(0,T;Y)}\norm{z}_{L^2(0,T;Y)}.
	\end{equation*}
	\nin Then by the compactness of $W(0,T; \mathcal{D}(A),Y)$ in $L^2(0,T;Y) $, we obtain \eqref{asp_3}.\\
	To verify \eqref{asp_4} we proceed with $y \in \Wiay,\ z \in L^2(I;V)$ and estimate
	\begin{eqnarray}
	\norm{F'(y)z}^2_{\Liy} = \int_{0}^{\infty}\int_{\Omega} \left(\mathfrak{f}'(y) - \mathfrak{f}'(0)\right)^2z^2 \ dxdt \leq C\norm{y}^2_{\Wiay}\norm{z}^2_{L^2(I;V)}. \nonumber
	\end{eqnarray}
	\nin This shows $F'(y)$ satisfies \eqref{asp_4}. Assumption \eqref{asp_5} can be verified since $\mathfrak{f}'(y)$, is assumed to be globally Lipschitz continuous  and $y \in C(I;V)$ for $y \in B_V(\delta_4)$.

	\/*
	\appendix		
	\section{Appendix}
	\begin{thm}\cite[p 173]{BF:2013}
		Let $\Omega$ be a Lipschitz domain in $\BR^d$ with compact boundary. Let $p \in [1,\infty]$ and $\ds q \in \left[p, \frac{pd}{d - p} \right]$. There exists a $C > 0$ such that
		\begin{equation}\label{int-inq}
		\norm{\varphi}_{L^q(\Omega)} \leq C \norm{\varphi}_{L^p(\Omega)}^{1 + \rfrac{d}{q} - \rfrac{d}{p}} \norm{\varphi}_{W^{1,p}(\Omega)}^{\rfrac{d}{p} - \rfrac{d}{q}}, \quad \text{ for all } \varphi \in W^{1,p}(\Omega).
		\end{equation}
	\end{thm}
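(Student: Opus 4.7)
The plan is to reduce the inequality to the whole-space case $\Omega=\BR^d$ by means of a bounded linear extension operator $E\colon W^{1,p}(\Omega)\to W^{1,p}(\BR^d)$, whose existence is guaranteed since $\Omega$ is assumed to be Lipschitz with compact boundary (Stein's extension theorem). Once the inequality is proved on $\BR^d$ with the exponent pair $1+\rfrac{d}{q}-\rfrac{d}{p}$ and $\rfrac{d}{p}-\rfrac{d}{q}$, applying it to $E\varphi$ and using the trivial restriction bound $\norm{\varphi}_{L^q(\Omega)}\le \norm{E\varphi}_{L^q(\BR^d)}$ together with $\norm{E\varphi}_{L^p(\BR^d)}\le C_E\norm{\varphi}_{L^p(\Omega)}$ and $\norm{E\varphi}_{W^{1,p}(\BR^d)}\le C_E\norm{\varphi}_{W^{1,p}(\Omega)}$ will yield the claim on $\Omega$, at the cost of multiplying the constant by a factor depending only on $\Omega$.

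For the whole-space case my strategy rests on two classical ingredients. First, the endpoint Sobolev embedding $W^{1,1}(\BR^d)\hookrightarrow L^{d/(d-1)}(\BR^d)$, which is proved elementarily by integration along coordinate hyperplanes and a tensor-product H\"older argument. Second, multiplicative H\"older interpolation between $L^p$-norms. Applying the endpoint embedding to $|\varphi|^{s}$ for a carefully chosen $s\ge 1$, using the chain rule $\nabla|\varphi|^{s}=s|\varphi|^{s-1}\mathrm{sgn}(\varphi)\nabla\varphi$, and H\"older's inequality on $\int s|\varphi|^{s-1}|\nabla\varphi|\,dx$ produces the homogeneous form
\[
\norm{\varphi}_{L^{q}(\BR^d)}\le C\,\norm{\nabla\varphi}_{L^{p}(\BR^d)}^{\theta}\,\norm{\varphi}_{L^{r}(\BR^d)}^{1-\theta}
\]
for a suitable auxiliary exponent $r$ and interpolation parameter $\theta$. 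A further application of H\"older's inequality between $L^p$ and the target exponent allows one to replace $r$ by $p$, and the scaling invariance $\varphi(x)\mapsto\varphi(\lambda x)$ forces $\theta=\rfrac{d}{p}-\rfrac{d}{q}$. Density of $C_c^{\infty}(\BR^d)$ in $W^{1,p}(\BR^d)$ extends the inequality to the full Sobolev space. The passage from homogeneous to inhomogeneous form \eqref{int-inq} is then immediate, since $\norm{\nabla\varphi}_{L^p}\le \norm{\varphi}_{W^{1,p}}$ and the exponents $1-\theta=1+\rfrac{d}{q}-\rfrac{d}{p}$ and $\theta=\rfrac{d}{p}-\rfrac{d}{q}$ match the statement.

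The main technical obstacle is the algebraic bookkeeping: one must choose the auxiliary power $s$ and the H\"older exponents so that the powers in the final inequality match precisely the scaling-dictated pair $1+\rfrac{d}{q}-\rfrac{d}{p}$ and $\rfrac{d}{p}-\rfrac{d}{q}$ across the full admissible range $q\in[p,\rfrac{pd}{d-p}]$, and one has to verify that the interpolation parameters so obtained stay in $[0,1]$ exactly on that range. A secondary issue is the interpretation of the endpoints of the $p$-range: for $p=\infty$ one has $q=p=\infty$ and the inequality is trivial; the critical case corresponding to $p=d$ (formally $\rfrac{pd}{d-p}=\infty$) must be read with the convention that $q<\infty$, since at $q=\infty$ the Sobolev endpoint embedding fails and a Trudinger--Moser substitute would be needed instead.
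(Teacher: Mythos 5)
The paper does not actually prove this statement: it is imported verbatim from Boyer--Fabrie \cite[p.~173]{BF:2013} as a quoted auxiliary result (and sits in a source block that is not even compiled), so any proof you give is necessarily a ``different route'' from the paper, which offers none. Your outline is the standard Gagliardo--Nirenberg argument and is correct in its essentials. In particular, invoking Stein's extension operator specifically is the right move, since the two-norm right-hand side of \eqref{int-inq} requires an operator that is \emph{simultaneously} bounded $L^p(\Omega)\to L^p(\BR^d)$ and $W^{1,p}(\Omega)\to W^{1,p}(\BR^d)$; a generic $W^{1,p}$-extension would not give the factor $\norm{\varphi}_{L^p(\Omega)}^{1+\rfrac{d}{q}-\rfrac{d}{p}}$. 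On $\BR^d$ the scaling computation does force $\theta=\rfrac{d}{p}-\rfrac{d}{q}$, and the endpoint embedding $W^{1,1}\hookrightarrow L^{d/(d-1)}$ applied to $|\varphi|^s$ plus H\"older is the classical engine. Where your sketch is thinnest is exactly the regime the host paper uses ($d=p=2$, $q=8$): for $1\le p<d$ one cleanly takes $s=p(d-1)/(d-p)$ to get $\norm{\varphi}_{L^{p^*}}\le C\norm{\nabla\varphi}_{L^p}$ and then interpolates between $L^p$ and $L^{p^*}$, but for $p=d$ the endpoint $p^*=\infty$ is unavailable, and your ``auxiliary exponent $r$'' step genuinely requires either an iteration over increasing powers $s$ or an absorption of a power of $\norm{\varphi}_{L^q}$ from the right-hand side back into the left; the resulting constant then depends on $q$ and degenerates as $q\to\infty$, consistent with your remark that $q<\infty$ must be imposed there, but this case should be written out rather than subsumed under ``bookkeeping.'' Likewise, for $d<p<\infty$ the written range $q\in\bigl[p,\rfrac{pd}{d-p}\bigr]$ is vacuous as stated and must be reinterpreted as $q\in[p,\infty]$ via Morrey's embedding; you only dispose of $p=\infty$. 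None of this invalidates the approach, but a complete write-up has to split into these three regimes.
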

	
	\begin{thm}\cite[p328]{HT:1980}
		Let $\Omega$ be an arbitrary bounded domain, dim $\Omega = d$. Let $0 \leq t \leq s < \infty$ and $ \infty q \geq \wti{q} > 1$. Then, the following embedding holds true:
		\begin{equation}
		W^{s,\wti{q}}(\Omega) \subset W^{t, q}(\Omega), \ s - \frac{d}{\wti{q}} \geq t - \frac{d}{q} \quad \qedsymbol
		\end{equation}
	\end{thm}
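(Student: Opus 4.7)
The plan is to combine an extension argument with the sharp Sobolev–Gagliardo–Nirenberg embedding on $\mathbb{R}^d$, and to absorb the gap between $s - d/\tilde q$ and $t - d/q$ by exploiting the boundedness of $\Omega$.

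First, I would introduce the critical target exponent $q_\ast$ defined by
\begin{equation*}
\frac{1}{q_\ast} = \frac{1}{\tilde q} - \frac{s-t}{d},
\end{equation*}
which, under the hypothesis $s - d/\tilde q \ge t - d/q$, satisfies $q \le q_\ast$. Because $\Omega$ is bounded, Jensen's inequality yields $L^{q_\ast}(\Omega) \hookrightarrow L^{q}(\Omega)$. It therefore suffices to establish the sharp embedding $W^{s,\tilde q}(\Omega) \hookrightarrow W^{t,q_\ast}(\Omega)$, since the inclusion $W^{t,q_\ast}(\Omega) \hookrightarrow W^{t,q}(\Omega)$ then follows by applying $L^{q_\ast} \hookrightarrow L^q$ to the function and to each of its (fractional) derivatives of order at most $t$.

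For the sharp step, my strategy is to reduce to $\mathbb{R}^d$ via a Stein-type extension operator and then to use the Bessel potential characterization $W^{s,p}(\mathbb{R}^d) = H^{s,p}(\mathbb{R}^d)$ for $1 < p < \infty$, which simultaneously handles non-integer $s$ and $t$. In this language the embedding is equivalent to the assertion that $(I - \Delta)^{(t-s)/2}$ maps $L^{\tilde q}(\mathbb{R}^d)$ continuously into $L^{q_\ast}(\mathbb{R}^d)$. Since the Bessel kernel behaves like the Riesz kernel $I_{s-t}$ near the origin and decays exponentially at infinity, this mapping property is precisely the content of the Hardy–Littlewood–Sobolev inequality with scaling exponent $s - t = d\bigl(1/\tilde q - 1/q_\ast\bigr)$. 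Restriction back to $\Omega$ concludes the sharp case.

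The main obstacle is the treatment of arbitrary (possibly very rough) bounded domains, for which a Stein extension need not exist. The cleanest remedy, and the one pursued in the cited monograph, is to proceed by real interpolation: establish the embedding at two integer endpoints $(s_0,t_0)$ and $(s_1,t_1)$ sharing the fixed difference $s_i - t_i$ (at integer order the Gagliardo–Nirenberg–Sobolev inequality applied to the distributional derivatives on $\Omega$ is elementary and requires no regularity of $\partial\Omega$), and then interpolate via the identifications $(W^{s_0,p}(\Omega),W^{s_1,p}(\Omega))_{\theta,p} = W^{s,p}(\Omega)$ and the analogous identity at the target. Combined with the initial reduction to the sharp exponent $q_\ast$, this produces the claimed embedding for all admissible quadruples $(s,\tilde q,t,q)$.
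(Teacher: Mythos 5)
The paper offers no proof of this statement: it is quoted from Triebel's monograph (note the \qedsymbol{} terminating the statement), so the only question is whether your argument would actually establish it. It would not, and the failure occurs exactly at the point you flag as the ``main obstacle''. Your remedy --- prove the integer-order endpoint embeddings by Gagliardo--Nirenberg--Sobolev ``applied to the distributional derivatives on $\Omega$'', asserting that this ``requires no regularity of $\partial\Omega$'', and then interpolate --- is not available. For intrinsically defined Sobolev spaces the integer-order embedding $W^{1,p}(\Omega)\subset L^{q}(\Omega)$ with $q>p$ genuinely fails on rough bounded domains: on the planar cusp $\Omega_\beta=\{(x,y):0<x<1,\ 0<|y|<x^{\beta}\}$ the function $u(x,y)=x^{-\alpha}$ lies in $W^{1,p}(\Omega_\beta)$ whenever $(\alpha+1)p-\beta<1$ but fails to lie in $L^{q}(\Omega_\beta)$ once $\alpha q-\beta\ge 1$, and for any $q>p$ one can choose $\beta>qp/(q-p)-1$ and then $\alpha$ so that both hold. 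So the endpoint claim is false and no interpolation can rescue it. A second, independent defect is the identity $(W^{s_0,p}(\Omega),W^{s_1,p}(\Omega))_{\theta,p}=W^{s,p}(\Omega)$: real interpolation of Sobolev spaces yields Besov spaces $B^{s}_{p,p}$, which coincide with $W^{s,p}$ only for non-integer $s$ or for $p=2$, i.e.\ it fails precisely at the integer targets your scheme relies on.

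The statement is nevertheless true for arbitrary bounded domains, but only because the spaces there are defined by restriction, $W^{s,p}(\Omega)=\{f|_{\Omega}:f\in W^{s,p}(\mathbb{R}^{d})\}$ with the quotient norm; this is the hypothesis hidden in the word ``arbitrary''. With that definition no extension operator and no interpolation on $\Omega$ are needed: given $g\in W^{s,\tilde q}(\Omega)$, take any representative $f\in W^{s,\tilde q}(\mathbb{R}^{d})$, apply the full-space embedding $W^{s,\tilde q}(\mathbb{R}^{d})\subset W^{t,q}(\mathbb{R}^{d})$, and restrict back to $\Omega$. Your Bessel-potential/Hardy--Littlewood--Sobolev argument is the right tool for the full-space step, provided you supplement it with the elementary lowering of smoothness $H^{t'}_{q}\subset H^{t}_{q}$ for $t'\ge t$, which also disposes of the degenerate case $1/\tilde q-(s-t)/d\le 0$ in which your critical exponent $q_\ast$ is undefined. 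Your opening reduction via $q_\ast$ and the inclusion $L^{q_\ast}(\Omega)\subset L^{q}(\Omega)$ (which is H\"older's inequality on the bounded set $\Omega$ rather than Jensen's) is fine as far as it goes, but the heart of the matter on rough domains is the definition of the spaces, not an extension-plus-interpolation argument.
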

	
	\section{Appendix}
	\tcr{Decide $A'$ or $A^*$}\\
	
	\nin We need following lemmas to establish $p \in \Wiay$. The proofs will follow from the same argumentation as in \cite{BKP:2019}
	\begin{lemma}
		Let $\wti{A} \in \calL(\calD(A),Y)$ generate an analytic and exponentially stable semigroup on $Y$ and assume that there exists $M \geq 0$ such that for every $f \in \Liy$ there exists a unique $y \in \Wiay$ satisfying
		\begin{equation*}
		y_t = \wti{A}y + f \text{ on } [0,\infty), \ y(0) = 0, \ \norm{y}_{\Wiay} \leq M \norm{f}_{\Liy}.
		\end{equation*}
		Then there exists $\wti{M}$ such that for all $\Phi \in L^2(I;[\calD(A)]')$ there exists a unique $r \in \Wiya$ such that
		\begin{equation*}
		-r_t = \wti{A}^*r + \Phi, \quad \norm{r}_{\Wiya} \leq \wti{M} \norm{\Phi}_{L^2(I;[\calD(A)]')}.
		\end{equation*}
	\end{lemma}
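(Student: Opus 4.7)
The plan is a duality argument built on the Riesz representation theorem. By hypothesis the forward solution map $\calT: L^2(I;Y) \to \Wiay$, $\calT f = y$ solving $y_t = \wti{A} y + f$ with $y(0)=0$, is well defined with $\|\calT f\|_{\Wiay} \le M \|f\|_{L^2(I;Y)}$. For a given $\Phi \in L^2(I;[\calD(A)]')$, I introduce the linear functional
\[
\ell_\Phi(f) \;=\; \int_0^\infty \langle \Phi(t),\, (\calT f)(t)\rangle_{[\calD(A)]',\calD(A)}\,dt
\]
on $L^2(I;Y)$, which satisfies $|\ell_\Phi(f)| \le M\|\Phi\|_{L^2(I;[\calD(A)]')}\|f\|_{L^2(I;Y)}$. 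Riesz representation then produces a unique $r \in L^2(I;Y)$ with $\ell_\Phi(f) = (r,f)_{L^2(I;Y)}$ for all $f$ and $\|r\|_{L^2(I;Y)} \le M\|\Phi\|_{L^2(I;[\calD(A)]')}$.

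To identify the equation satisfied by $r$, I would test with $\varphi \in C_c^\infty((0,\infty);\calD(A))$. Since $\varphi(0)=0$, uniqueness of the forward problem yields $\calT(\varphi_t - \wti{A}\varphi) = \varphi$, so
\[
\int_0^\infty \langle \Phi, \varphi\rangle_{[\calD(A)]',\calD(A)}\,dt \;=\; (r,\, \varphi_t - \wti{A}\varphi)_{L^2(I;Y)},
\]
which is precisely the weak formulation of $-r_t - \wti{A}^* r = \Phi$ in $\mathcal{D}'((0,\infty); [\calD(A)]')$. Since $\wti{A}^* \in \calL(Y,[\calD(A)]')$, one has $\wti{A}^* r \in L^2(I;[\calD(A)]')$, hence $r_t = -\wti{A}^* r - \Phi \in L^2(I;[\calD(A)]')$, so $r \in \Wiya$ and the triangle inequality produces the claimed estimate with $\wti{M} = M\bigl(1+\|\wti{A}^*\|_{\calL(Y,[\calD(A)]')}\bigr)$.

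For uniqueness, a difference $\rho \in \Wiya$ of two candidate solutions satisfies $-\rho_t - \wti{A}^* \rho = 0$. Pairing against arbitrary $f \in C_c^\infty((0,\infty);Y)$ with $y = \calT f \in \Wiay$ and integrating by parts formally yields
\[
(\rho, f)_{L^2(I;Y)} \;=\; \int_0^\infty \langle \rho,\, y_t - \wti{A} y\rangle\,dt \;=\; \bigl[(\rho, y)_Y\bigr]_0^\infty \,-\, \int_0^\infty \langle \rho_t + \wti{A}^* \rho,\, y\rangle\,dt \;=\; 0,
\]
since $y(0)=0$ and the endpoint term at $\infty$ vanishes using $y(T)\to 0$ in $V$ (by exponential stability of $e^{\wti A t}$ and the forward estimate, as in the \cite{CK:2017} result already invoked in the paper) together with the boundedness of $\rho$ in $V'$ coming from the embedding $\Wiya \hookrightarrow C_b([0,\infty);V')$. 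Density of $C_c^\infty((0,\infty);Y)$ in $L^2(I;Y)$ then forces $\rho = 0$. The main obstacle I anticipate is this last integration-by-parts over the infinite horizon in mixed-regularity spaces, in particular a rigorous justification of the product rule $\frac{d}{dt}(\rho,y)_Y = \langle \rho_t, y\rangle_{[\calD(A)]',\calD(A)} + (\rho, y_t)_Y$ and the vanishing of the endpoint at $\infty$; the clean route is to perform the computation on a truncated interval $[0,T]$ and then pass $T \to \infty$, using the exponential decay of $y$ and the uniform bound on $\rho$ in $V'$.
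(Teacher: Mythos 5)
Your proposal is correct and follows essentially the same route as the paper's proof: both construct $r$ by duality from the bounded invertibility of the forward map $y\mapsto y_t-\wti{A}y$ on $W^0_{\infty}(\calD(A),Y)$ (your Riesz-representation step for $\ell_\Phi=\Phi\circ\calT$ is exactly the paper's solution of $T^*r=\Phi$ obtained via the closed range theorem), and both then read off $r_t=-\wti{A}^*r-\Phi\in L^2(I;[\calD(A)]')$ from the weak formulation to conclude $r\in\Wiya$ with the stated bound. The one place you go beyond the paper is the explicit integration-by-parts argument for uniqueness of the backward solution on the infinite horizon, which the paper leaves implicit in the injectivity of $T^*$; your plan to truncate to $[0,T]$ and let $T\to\infty$, using $y(T)\to 0$ in $V$ together with the boundedness of $\rho$ in $V'$, is the right way to make that step rigorous.
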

	\begin{proof}
		\nin Step 1: Let us define $T:\WOiay \to \Liy$ by $Ty = y_t - \wti{A}y$. The adjoint of $T$ is given by $T^*: \Liy \to (\WOiay)'$. In the computation below, we omit spaces from the first duality paring to maintain the readability. Then we compute,
		\begin{equation*}
		\ip{T^*\varphi}{y}_{(\WOiay)',\WOiay} := \ipp{\varphi}{Ty}_{\Liy} = \ipp{\varphi}{y_t - \wti{A}y}_{\Liy}.
		\end{equation*}
		\nin Since by assumption, $T$ is homeomorphic and in particular surjective and injective. Then by the close range theorem, there exists $C > 0$,
		\begin{equation}\label{bb_Ts}
		\norm{\varphi}_{\Liy} \leq C \norm{T^*\varphi}_{(\WOiay)'}, \quad \forall \varphi \in \Liy.
		\end{equation}
		\nin Step 2: Let $\Phi \in L^2(I;[\calD(A)]')$ be arbitrary. Then there exists a unique $r \in \Liy$ such that $T^*r = \Phi$, and by \eqref{bb_Ts} we have
		\begin{equation}\label{r_phi}
		\norm{r}_{\Liy} \leq C \norm{\Phi}_{(\WOiay)'} \leq C \norm{\Phi}_{\LiAd}.
		\end{equation}
		\nin Since $T^*r = \Phi$, we have for all $y \in \Wiay$,
		\begin{align*}
		\ip{\Phi}{y}_{\LiAd, \LiA} &= \ipp{T^*r}{y}_{\Liy} = \ipp{r}{Ty}_{\Liy}\\
		&= \ipp{r}{y_t}_{\Liy} - \ipp{\wti{A}^*r}{y}_{\LiAd, \LiA}.
		\end{align*}
		\nin This implies that the time derivative of $r$, in the sense of distributions, can be extended to a linear form on $\WOiay$ with the formula
		\begin{multline}\label{r_t}
		\ip{r_t}{y}_{(\WOiay)', \WOiay} = - \ipp{r}{y_t}_{\Liy} = - \ip{\Phi + \wti{A}^*r}{y}_{\LiAd, \LiA},\\ \quad y \in \WOiay.
		\end{multline}
		\nin We estimate
		\begin{align*}
		\abs{\ip{r_t}{y}} &\leq \abs{\ip{\Phi + \wti{A}^*r}{y}_{\LiAd, \LiA}} \leq \norm{\Phi + \wti{A}^*r}_{\LiAd}\norm{y}_{\LiA}\\
		&\leq \norm{\Phi}_{\LiAd}\norm{y}_{\LiA} + C \norm{\Phi}_{\LiAd}\norm{y}_{\LiA}, \quad \text{(by \eqref{r_phi})}\\
		&\leq (1 + C)\norm{\Phi}_{\LiAd}\norm{y}_{\LiA}.
		\end{align*}
		\nin Then with \eqref{r_t} and recalling $y \in \WOiay$ is dense in $\LiA$, we obtain that $r_t$ can be extended to a bounded linear form on $\LiA$, i.e. $r_t$ can be extended to an element in $\LiAd$, moreover
		\begin{equation*}
		\norm{r_t}_{\LiAd} \leq (1 + C) \norm{\Phi}_{L^2(I;[\calD(A)]')}.
		\end{equation*}
		\nin It follows that $r \in \Wiay$. Moreover,
		\begin{equation*}
		\norm{r}_{\Wiya} \leq 2(1 + C) \norm{\Phi}_{L^2(I;[\calD(A)]')} \text{ and } -r_t - \wti{A}^*r = \Phi
		\text{ in } \LiAd.
		\end{equation*}
	\end{proof}	
	
	\begin{clr}
		For all $\Phi \in \Liy$, the system
		\begin{equation*}
		-r_t = A_{\rho}^*r + \Phi
		\end{equation*}
		has a unique solution $r \in \Wiay$. Moreover, there exists a constant $M_{\rho} > 0$ independent of $\Phi$ such that
		\begin{equation}
		\norm{r}_{\Wiay} \leq M_{\rho} \norm{\Phi}_{\Liy}.
		\end{equation}
	\end{clr}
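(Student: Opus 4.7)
The plan is to deduce this strengthened regularity from Consequence \ref{cons:2} applied to the adjoint generator $A_\rho^*$, after a time-reversal that converts the backward equation into a forward Cauchy problem of the same structure. Since the standing hypothesis gives $\calD(A)=\calD(A^*)$ algebraically and topologically and $A$ generates an analytic semigroup on $Y$, the adjoint $A^*$ also generates an analytic semigroup on $Y$, so $A_\rho^*=A^*-\rho I$ is the generator of an exponentially stable analytic semigroup on $Y$ with the same graph norm on $\calD(A^*)=\calD(A)$. Consequently, the maximal $L^2$-regularity assertion of Consequence \ref{cons:2} holds verbatim with $A_\rho$ replaced by $A_\rho^*$, with a (possibly new) constant that I will again denote $M_\rho$.

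To construct the solution I would proceed by time-reversal on finite horizons. For each $T>0$, set $\tilde r(\tau)=r(T-\tau)$ and $\tilde\Phi(\tau)=\Phi(T-\tau)$; the desired equation $-r_t=A_\rho^* r+\Phi$ becomes the forward problem
\[
\tilde r_\tau=A_\rho^*\tilde r+\tilde\Phi,\qquad \tilde r(0)=r(T),
\]
on $[0,T]$. The preceding lemma, applied with $\wti A=A_\rho$, already supplies a $\Wiya$ solution $r$ whose trace $r(T)$ tends to $0$ in an appropriate sense; combining this with the variation-of-constants representation $r(t)=\int_t^\infty e^{(s-t)A_\rho^*}\Phi(s)\,ds$ and the exponential decay of $e^{\tau A_\rho^*}$ on $Y$ shows that $\|r(T)\|_V\to 0$ along a suitable subsequence. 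Consequence \ref{cons:2} for $A_\rho^*$ then yields $\tilde r\in W(0,T;\calD(A),Y)$ with norm controlled by $\|r(T)\|_V+\|\tilde\Phi\|_{L^2(0,T;Y)}\le \|r(T)\|_V+\|\Phi\|_{\Liy}$; reversing the substitution gives the same bound for $r$ on $[0,T]$. Passing $T\to\infty$ one obtains $r\in\Wiay$ with the required estimate $\|r\|_{\Wiay}\le M_\rho\|\Phi\|_{\Liy}$. Uniqueness is immediate since the difference of two candidate solutions satisfies the homogeneous equation with zero limit at infinity, and the variation-of-constants formula then forces it to vanish.

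The main obstacle is the uniformity in $T$ of the maximal-regularity constant. The function $c(T)$ from Consequence \ref{cons:2} is only guaranteed to be continuous in $T$, so uniformity has to be extracted by the rescaling argument $y(t)\mapsto e^{-\rho t}y(t)$ that is used inside Consequence \ref{cons:2} itself, rather than by an abstract appeal to its statement. This is precisely where the exponential stability of the $A_\rho^*$-semigroup is essential, and in turn why the assumption $\calD(A)=\calD(A^*)$ — which transfers analyticity and stability to the adjoint — cannot be dispensed with.
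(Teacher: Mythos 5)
Your proof takes a genuinely different route from the paper's. The paper's own argument is a one-step shift along the scale of fractional power spaces: it applies the preceding Lemma (the duality/transposition result, proved via the closed range theorem) to the shifted datum $\Psi=(-A_\rho^*)^{\rfrac{1}{2}}\Phi\in L^2(I;[\calD(A)]')$, obtains $z\in\Wiya$, and recovers $r$ from $z$ through the isomorphisms induced by $(-A_\rho^*)^{\rfrac{1}{2}}$, which lifts the regularity ladder without any time-reversal or trace estimates. (As written in the source the paper's proof is only a fragment and, taken literally, a single application of the half-power shift lands in $W_\infty(V,V')$ rather than $\Wiay$; one must either iterate or use the full power $-A_\rho^*$.) Your route instead goes through time-reversal and the forward maximal $L^2$-regularity of Consequence \ref{cons:2} for $A_\rho^*$ on finite horizons, followed by a limit $T\to\infty$; this is closer in spirit to Steps 1 and 3 of the proof of Proposition \ref{prop-p-W} in the main text and has the advantage of avoiding fractional powers entirely, at the cost of needing control of the terminal trace.

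That terminal trace is the one place where your argument is under-justified. You claim that $\|r(T)\|_V\to 0$ along a subsequence follows from the $\Wiya$ solution, the variation-of-constants formula and the exponential decay of $e^{\tau A_\rho^*}$ \emph{on $Y$}. Decay on $Y$ alone does not reach the $V$-norm: you need either the analytic smoothing bound $\|e^{\tau A_\rho^*}\|_{\calL(Y,V)}\lesssim \tau^{-\rfrac{1}{2}}e^{-\omega\tau}$ combined with a Young/Cauchy--Schwarz computation showing $t\mapsto\|r(t)\|_V^2$ is integrable, or (as the paper does for the adjoint state) a direct energy estimate testing the equation with $r$ and using the $V$--$Y$ coercivity \eqref{eq:kk2} to conclude $r\in L^2(I;V)$; only then does a subsequence $t_n\to\infty$ with $\|r(t_n)\|_V\to 0$ exist. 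With that step supplied, and with the $T$-uniformity of the maximal-regularity constant handled as you indicate, your argument closes correctly, including the uniqueness claim.
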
	
	
	\begin{proof}
		We apply the previous Lemma to $z_t = A_{\rho}^* z + \Psi$ with $\Psi = \left( -A_{\rho}^* \right)^{\rfrac{1}{2}}\Phi \in \Liy$, to obtain $\ds \norm{z}_{\Wiya} \leq \wti M \norm{\Phi}_{}$
	\end{proof}
	\nin We now show that for small initial data $y_0$ is more regular than $p \in \Liy$. For this we need more smoothness of the boundary $\Gamma$. \tcr{adjust $\delta$!}
	
	\begin{prop}\label{prop-p-W-A}
		There exists $\delta > 0$ such that for all $y_0 \in B_V(\delta)$, for all solutions $(\bar{y}, \bar{u})$ of \eqref{P}, there exists a unique costate $p \in \Wiay$ satisfying
		\begin{equation}
		-p_t - A^*p + F'(\bar y)^*p = \bar y \quad \text{ in } L^2(I;Y)
		\end{equation}
	\end{prop}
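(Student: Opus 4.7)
The plan is to take the adjoint state $\bar p$ whose existence is already guaranteed by Proposition \ref{prop:adj} and upgrade its regularity to $\Wiay$ by recasting the adjoint equation as a parabolic equation with a source in $L^2(I;Y)$, so that the Corollary just preceding this proposition can be applied directly. Throughout I would work with $y_0\in B_V(\delta_2)$ so that Proposition \ref{prop:adj} is available, and shrink $\delta$ further if needed.

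First I would reuse \textit{Step} 1 of the proof of Proposition \ref{prop-p-W} to establish the intermediate regularity $\bar p \in W_\infty(V,V')$. This argument uses only the $V$--$Y$ coercivity \eqref{eq:kk2}, assumption \eqref{asp_4}, and the fact that because $\bar p \in L^2(I;Y)$ one can extract a monotone sequence $t_n\to\infty$ with $\bar p(t_n)\to 0$ in $Y$. Testing the adjoint equation against $\bar p$ on $(0,t_n)$ and passing to the limit produces the bound $\|\bar p\|_{L^2(I;V)}\le c\,\|y_0\|_V$, hence $\bar p\in L^2(I;V)$.

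Second, I would rewrite the adjoint equation using $A_\rho=A-\rho I$ in the form
\begin{equation*}
-\bar p_t - A_\rho^{*}\bar p \;=\; \bar y + F'(\bar y)^{*}\bar p + \rho\,\bar p.
\end{equation*}
By \eqref{asp_4}, $F'(\bar y)^{*}\bar p\in L^2(I;Y)$ since $\bar p\in L^2(I;V)$, so the right-hand side is an element $\Phi\in L^2(I;Y)$. The Corollary immediately preceding Proposition \ref{prop-p-W-A} then furnishes a unique solution $r\in\Wiay$ of $-r_t=A_\rho^{*}r+\Phi$ with the estimate $\|r\|_{\Wiay}\le M_\rho\|\Phi\|_{L^2(I;Y)}$. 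Because $r$ also lies in $W_\infty(Y,[\calD(A)]')$ and satisfies the original adjoint equation in $L^2(I;[\calD(A)]')$ with the same terminal behaviour $\lim_{t\to\infty}r(t)=0$ in $V'$, the uniqueness statement of Proposition \ref{prop:adj} forces $r=\bar p$. Hence $\bar p\in\Wiay$, uniqueness in $\Wiay$ is inherited from the corollary, and combining the estimates on $\|\bar p\|_{L^2(I;V)}$ and $\|\Phi\|_{L^2(I;Y)}$ yields a linear bound in $\|y_0\|_V$.

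The main obstacle is the delicate chaining of uniqueness statements across different regularity classes: the corollary gives uniqueness only in $\Wiay$, whereas Proposition \ref{prop:adj} produces $\bar p$ a priori only in $W_\infty(Y,[\calD(A)]')$, so the identification $r=\bar p$ must be justified by showing that both functions satisfy a common weak formulation together with the vanishing condition at $t=\infty$. A secondary issue is that the smallness constant $\delta$ may need to be reduced so that $\|F'(\bar y)^{*}\|_{\calL(L^2(I;V),L^2(I;Y))}$ is controlled (using Lemma \ref{WPP} and \eqref{eq:fprime}), guaranteeing that the intermediate bound on $\|\bar p\|_{L^2(I;V)}$ and hence on $\Phi$ depends linearly on $\|y_0\|_V$ uniformly in the local solution.
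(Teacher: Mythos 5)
The regularity half of your argument is a legitimate alternative to the paper's. The paper (Proposition \ref{prop-p-W}, Steps 1 and 3) reaches $\bar p\in\Wiay$ by two successive energy estimates -- first testing the adjoint equation with $\bar p$ on $(0,t_n)$, then with $-A_\rho^*\bar p$ -- whereas you stop after the first estimate and feed the resulting source $\Phi=\bar y+F'(\bar y)^*\bar p+\rho\,\bar p\in\Liy$ into the maximal-regularity corollary for $-r_t=A_\rho^*r+\Phi$. That is a clean shortcut, but your identification $r=\bar p$ cannot rest on ``the uniqueness statement of Proposition \ref{prop:adj}'': that proposition asserts only existence of a multiplier, not uniqueness. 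The identification should instead invoke the lemma preceding the corollary, which gives uniqueness of solutions of $-q_t=A_\rho^*q+\Phi$ in the \emph{weaker} class $\Wiya$ for the \emph{fixed} right-hand side $\Phi$; since both $\bar p$ (from Proposition \ref{prop:adj}) and $r$ lie in $\Wiya$ and solve that equation with the same $\Phi$, they coincide. With that correction this part is sound.

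The genuine gap is the uniqueness of the costate, which is precisely the delicate point of the infinite-horizon setting. You claim uniqueness is ``inherited from the corollary'', but the corollary's uniqueness concerns the equation with a prescribed source; in the adjoint equation the term $F'(\bar y)^*p$ depends on $p$, and, more fundamentally, the backward equation $-q_t-A^*q-F'(\bar y)^*q=0$ on $I=(0,\infty)$ with decay at infinity is \emph{not} uniquely solvable by $q=0$ in general: since $A$ is only assumed stabilizable, it may possess an unstable eigenpair $A^*\phi=\lambda\phi$, $\mathrm{Re}\,\lambda>0$, and then $q(t)=e^{-\lambda t}\phi$ is a nontrivial decaying solution of the principal part $-q_t=A^*q$ lying in $\Wiay$. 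Uniqueness therefore cannot be a statement about the PDE alone; it must be extracted from the optimality system. This is exactly what Step 2 of Proposition \ref{prop-p-W} supplies: one shows that $\|B^*\bar p(t)\|_{\calU}\le\eta$ for all $t\ge T$, so the control constraint is inactive there and any two multipliers $\bar p,\bar q$ satisfy $B^*\bar p(t)=B^*\bar q(t)=\bar u(t)$ for $t\ge T$; a duality argument with test states $z$ driven by the stabilizing feedback $w=-Kz$ then forces $\bar p=\bar q$ on $[T,\infty)$, and backward well-posedness on the finite interval $[0,T]$ with the common terminal value $\bar p(T)$ finishes the job. Your proposal contains no substitute for this argument, so as written it shows only that every adjoint state lies in $\Wiay$, not that there is exactly one.
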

	
	\begin{proof}
		to be completed.
	\end{proof}
	*/

	\medskip
	% The data information below will be filled by AIMS editorial staff
	Received xxxx 20xx; revised xxxx 20xx.
	\medskip
	%This is a teste line-Stephen
	
	%	\begin{equation*}
	%	\int_{0}^{T} \ip{F (y_n(t)) - F(\bar{y}(t))}{z(t)}_{V^*,V} dt \leq M \norm{y_n - y}_{L^2(I, H^{\varepsilon}(\Omega))}^{\alpha}, \  0 < \alpha, \ 0 \leq \varepsilon < 1,
	%	\end{equation*}
	%	locally in space.
\end{document}